\newtheorem{thm}{Theorem}[section]
\newtheorem{prop}[thm]{Proposition}
\newtheorem{lem}[thm]{Lemma}
\newtheorem{cor}[thm]{Corollary}
\theoremstyle{definition}
\newtheorem{ex}[thm]{Example}
\newtheorem{rem}[thm]{Remark}
\newtheorem{defn}[thm]{Definition}
\title[Generalized fibre sums and the canonical class]{Generalized fibre sums of 4-manifolds and the canonical class}
\author{M.~J.~D.~Hamilton}
\address{      Institut f\"ur Geometrie und Topologie\\
               Universit\"at Stuttgart\\
               Pfaffenwaldring 57\\
               70569 Stuttgart\\
               Germany}
\email{mark.hamilton@math.lmu.de}
\date{\today}
\subjclass[2010]{Primary 57R19; Secondary 57N13, 57R17}
\keywords{fibre sum, 4-manifold, symplectic, canonical class}
\begin{document}

\begin{abstract} In this paper we determine the integral homology and cohomology groups of a closed 4-manifold $X$ obtained as the generalized fibre sum of two closed 4-manifolds $M$ and $N$ along embedded surfaces of genus $g$ and self-intersection zero. If the homologies of the 4-manifolds are torsion free and the surfaces represent indivisible homology classes, we derive a formula for the intersection form of $X$.  If in addition the 4-manifolds $M$ and 
$N$ are symplectic and the surfaces symplectically embedded we derive a formula for the canonical class of the symplectic fibre sum.
\end{abstract}

\maketitle

\tableofcontents

\section{Introduction}
In this paper we are interested in the generalized fibre sum of closed oriented 4-manifolds $M$ and $N$ along closed embedded oriented surfaces $\Sigma_M$ and $\Sigma_N$ of genus $g$. We always assume that both surfaces represent non-torsion homology classes and have self-intersection zero, i.e.~their normal bundles are trivial, and choose embeddings
\begin{align*}
i_M\colon \Sigma&\rightarrow M\\
i_N\colon \Sigma&\rightarrow N
\end{align*}
that realize the surfaces as images of a fixed closed surface $\Sigma$ of genus $g$. The generalized fibre sum $X=X(\phi)=M\#_{\Sigma_M=\Sigma_N}N$ is defined as 
\begin{equation*}
X(\phi)=M'\cup_{\phi}N'
\end{equation*}
where $M'$ and $N'$ denote the manifolds with boundary $\Sigma\times S^1$ obtained by deleting the interior of tubular neighbourhoods $\Sigma\times D^2$ of the surfaces in $M$ and $N$ and $\phi$ is an orientation reversing diffeomorphism $\phi\colon \partial M'\rightarrow \partial N'$ that preserves the $S^1$ fibration and covers the diffeomorphism $i_N\circ i_M^{-1}$ between the surfaces. 

The generalized fibre sum, also called {\em Gompf sum}, is one of the most important techniques in constructing new 4-manifolds. It is therefore desirable to understand the basic topological invariants of a generalized fibre sum in the general situation, without having to do the calculations in each example anew. Specifically, we want to calculate the integral homology and cohomology groups and the intersection form of the 4-manifold $X$ obtained as a generalized fibre sum of $M$ and $N$ as above. To do so, we will first calculate in Section \ref{section hom cohom of M'} the homology and cohomology of the complement $M'=M\setminus \text{int}\,\nu\Sigma_M$ and determine the mapping induced by the gluing diffeomorphism $\phi$ on the homology of the boundaries $\partial M'$ and $\partial N'$ in Section \ref{subsect action phi}. The homology and cohomology groups of $X$ are then derived by Mayer-Vietoris arguments from the corresponding groups of $M'$ and $N'$.

For example, having calculated the first cohomology $H^1(X)$, we can determine the Betti numbers of $X$: Let $d$ denote the dimension of the kernel of the linear map of $\mathbb{R}$-vector spaces
\begin{equation*}
i_M\oplus i_N\colon H_1(\Sigma;\mathbb{R})\longrightarrow H_1(M;\mathbb{R})\oplus H_1(N;\mathbb{R}),
\end{equation*}
induced by the embeddings. Then we have:

\begin{cor} The Betti numbers of a generalized fibre sum $X=M\#_{\Sigma_M=\Sigma_N}N$ along surfaces $\Sigma_M$ and $\Sigma_N$ of genus $g$ and self-intersection zero are given by
\begin{align*}
b_0(X)&=b_4(X)=1\\
b_1(X)&=b_3(X)=b_1(M)+b_1(N)-2g+d\\
b_2(X)&=b_2(M)+b_2(N)-2+2d\\
b_2^+(X)&=b_2^+(M)+b_2^+(N)-1+d\\
b_2^-(X)&=b_2^-(M)+b_2^-(N)-1+d.
\end{align*} 
\end{cor}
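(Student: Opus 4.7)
The plan is to deduce all Betti numbers of $X$ from the computation of $H^1(X)$ carried out in the preceding sections, combined with Poincar\'e duality, Euler characteristic additivity, and Novikov additivity of the signature. Since $X$ is a closed, connected, oriented $4$-manifold one has $b_0(X)=b_4(X)=1$ and $b_3(X)=b_1(X)$ by Poincar\'e duality; the value $b_1(X)=b_1(M)+b_1(N)-2g+d$ is read off from the earlier description of $H^1(X)$ by passing to ranks.

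To obtain $b_2(X)$ I would compute the Euler characteristic from the defining decomposition $X=M'\cup_\phi N'$. Inclusion-exclusion, together with $\chi(\Sigma\times S^1)=0$, gives $\chi(X)=\chi(M')+\chi(N')$. The analogous decompositions $M=M'\cup(\Sigma\times D^2)$ and $N=N'\cup(\Sigma\times D^2)$ yield $\chi(M')=\chi(M)+2g-2$ and $\chi(N')=\chi(N)+2g-2$, so
\[
\chi(X)=\chi(M)+\chi(N)+4g-4.
\]
Combining this with $\chi(X)=2-2b_1(X)+b_2(X)$ and the analogous identities for $\chi(M)$ and $\chi(N)$, substitution of the value of $b_1(X)$ gives $b_2(X)=b_2(M)+b_2(N)-2+2d$ after a short cancellation.

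For the refinement into $b_2^\pm$ I would invoke Novikov additivity. Since $X$ is built by gluing the compact $4$-manifolds $M'$ and $N'$ along a closed $3$-manifold, $\sigma(X)=\sigma(M')+\sigma(N')$. The tubular neighbourhood $\Sigma\times D^2$ has vanishing intersection form, because the only generator $[\Sigma\times\{0\}]$ has self-intersection zero, so the same additivity applied to $M=M'\cup\nu\Sigma_M$ and $N=N'\cup\nu\Sigma_N$ yields $\sigma(M')=\sigma(M)$ and $\sigma(N')=\sigma(N)$, whence $\sigma(X)=\sigma(M)+\sigma(N)$. Solving the linear system $b_2^+(X)+b_2^-(X)=b_2(X)$ and $b_2^+(X)-b_2^-(X)=\sigma(X)$ produces the stated formulas for $b_2^\pm(X)$. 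The entire argument is bookkeeping once the $H^1(X)$ calculation is in hand; the only nontrivial inputs, Poincar\'e duality and Novikov additivity, are classical, and I foresee no genuine obstacle.
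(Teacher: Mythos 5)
Your proposal is correct and follows essentially the same route as the paper: $b_1(X)$ from the rank of $\ker(i_M^*+i_N^*)$ via the $H^1(X)$ theorem, $b_2(X)$ from Euler characteristic additivity over the decompositions $X=M'\cup N'$ and $M=M'\cup\nu\Sigma_M$, and $b_2^{\pm}(X)$ from Novikov additivity $\sigma(X)=\sigma(M)+\sigma(N)$. The only cosmetic difference is that you derive the signature identity from the basic gluing statement together with $\sigma(\Sigma\times D^2)=0$, whereas the paper cites it directly; both are fine.
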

We then derive formulae for the groups $H_1(X)$ and $H^2(X)$ with integer coefficients, see Theorems \ref{H 1 for X} and \ref{computation thm H2}. Similar formulae for the second cohomology can be found in several places in the literature, in particular in some special cases, for example \cite{DL, FSknot, FSnon, FSfam, FrM1, GS}. They are probably known to the experts, but we could not find a reference for them in the general case. Understanding the cohomology is also a necessary prerequisite for the formula of the canonical class that we prove in the second part of this paper.

In Section \ref{section calc intersection form of X} we derive a formula for the intersection form of $X$ in the case that the cohomologies of $M$, $N$ and $X$ are torsion free and the surfaces $\Sigma_M$ and $\Sigma_N$ represent indivisible classes. Let $B_M$ be a surface in $M$ such that $B_M\Sigma_M=1$ and let $P(M)$ denote the orthogonal complement to the subgroup $\mathbb{Z}B_M\oplus\mathbb{Z}\Sigma_M$ in the second cohomology of $M$. We then get a splitting
\begin{equation*}
H^2(M)=P(M)\oplus \mathbb{Z}B_M\oplus \mathbb{Z}\Sigma_M.
\end{equation*}
An analogous splitting exists for $H^2(N)$. We want to derive such a splitting for $H^2(X)$ together with a formula for the intersection form. First, a choice of framing for the surface $\Sigma_M$, i.e.~a trivialization of the normal bundle, determines a push-off $\Sigma^M$ of the surface into the boundary $\partial M'$ and this surface determines under inclusion a surface $\Sigma_X$ in $X$. A similar surface $\Sigma_X'$ is determined by a framing for $\Sigma_N$. Depending on the homology of $X$ and the gluing diffeomorphism, the surfaces $\Sigma_X$ and $\Sigma_X'$ do not necessarily define the same homology class in $X$. The surfaces $B_M$ and $B_N$ minus a disk sew together to define a surface $B_X$ in $X$ with intersection number $B_X\Sigma_X=1$. We then have:
\begin{thm} Let $X=M\#_{\Sigma_M=\Sigma_N}N$ be a generalized fibre sum of closed oriented 4-manifolds $M$ and $N$ along embedded surfaces $\Sigma_M,\Sigma_N$ of genus $g$ and self-intersection zero which represent indivisible homology classes. Suppose that the cohomology of $M$, $N$ and $X$ is torsion free. Then there exists a splitting
\begin{equation*}
H^2(X;\mathbb{Z})=P(M)\oplus P(N)\oplus (S'(X)\oplus R(X))\oplus (\mathbb{Z}B_X\oplus\mathbb{Z}\Sigma_X),
\end{equation*}
where 
\begin{equation*}
(S'(X)\oplus R(X))=(\mathbb{Z}S_1\oplus\mathbb{Z}R_1)\oplus\dotsc\oplus(\mathbb{Z}S_d\oplus\mathbb{Z}R_d).
\end{equation*}
The direct sums are all orthogonal, except the direct sums inside the brackets. In this decomposition of $H^2(X;\mathbb{Z})$, the restriction of the intersection form $Q_X$ to $P(M)$ and $P(N)$ is equal to the intersection form induced from $M$ and $N$ and has the structure
\begin{equation*}
\left(\begin{array}{cc} B_M^2+B_N^2 &1 \\ 1& 0 \\ \end{array}\right)
\end{equation*} 
on $\mathbb{Z}B_X\oplus\mathbb{Z}\Sigma_X$ and the structure   
\begin{equation*}
\left(\begin{array}{cc} S_i^2 &1 \\ 1& 0 \\ \end{array}\right)
\end{equation*}  
on each summand $\mathbb{Z}S_i\oplus\mathbb{Z}R_i$.
\end{thm}
In this formula the classes in $S'(X)$ are so-called {\em vanishing classes}, sewed together from surfaces in $M'$ and $N'$ which bound curves on the boundaries $\partial M'$ and $\partial N'$ that get identified under the gluing diffeomorphism $\phi$. The group $R(X)$ is the group of {\em rim tori} in $X$ and the integer $d$ is the dimension of the kernel of the map $i_M\oplus i_N$ above. The formula for the intersection form is similar to a formula for the simply-connected elliptic surfaces $E(n)$, which can be found, for example, in \cite{GS}. A natural question is how canonical this type of basis is. We will see that the choice of a basis for the rim tori in one fibre sum determines such a basis canonically in all fibre sums as the gluing diffeomorphism changes. In addition the basis for the rim tori determines the basis for the vanishing surfaces up to rim tori summands. The surfaces $\Sigma_X$ and $\Sigma_X'$ depend on the choice of framings and differ by a rim torus $R_C$ that depends on the gluing diffeomorphism. The rim tori themselves are independent of the choice of framing.

Given the decomposition of $H^2(M)$ as a direct sum above, we get an embedding of $H^2(M)$ into $H^2(X)$ by mapping $B_M$ to $B_X$, $\Sigma_M$ to $\Sigma_X$ and taking the identity on $P(M)$. This embedding does not in general preserve the intersection form, since $B_X^2=B_M^2+B_N^2$. There exists a similar embedding for $H^2(N)$ into $H^2(X)$, mapping $B_N$ to $B_X$ and $\Sigma_N$ to $\Sigma_X'$. Heuristically, we can think of $H^2(X)$ as being formed by joining $H^2(M)$ and $H^2(N)$ along their "nuclei" $\mathbb{Z}B_M\oplus\mathbb{Z}\Sigma_M$ and $\mathbb{Z}B_N\oplus\mathbb{Z}\Sigma_N$, which form the new nucleus $\mathbb{Z}B_X\oplus\mathbb{Z}\Sigma_X$ of $X$ by sewing together $B_M$ and $B_N$ to the surface $B_X$. There are additional summands coming from the vanishing classes and rim tori groups that do not exist in the closed manifolds $M$ and $N$ separately.

Finally, we consider the symplectic case: Suppose that $M$ and $N$ are symplectic manifolds and $\Sigma_M$ and $\Sigma_N$ symplectically embedded surfaces of genus $g$. Then the generalized fibre sum $X$ admits a symplectic form. In Section \ref{formula KX calculation} we derive a formula for the canonical class of $X$, which is minus the first Chern class of a compatible almost complex structure, under the same assumptions as in the theorem on the intersection form of $X$. In general, the canonical class has non-zero pairing with the vanishing surfaces, hence there is a rim tori contribution to the canonical class. The formula describes how the canonical class changes as we change the gluing diffeomorphism. It can be written as:

\begin{cor} Under the embeddings of $H^2(M)$ and $H^2(N)$ into $H^2(X)$, the canonical class of $X=M\#_{\Sigma_M=\Sigma_N}N$ is given by
\begin{equation*}
K_X=K_M+K_N+\Sigma_X+\Sigma_X'-(2g-2)B_X+\sum_{i=1}^dt_iR_i,
\end{equation*}
where $t_i= K_{X_0}S_i$.
\end{cor}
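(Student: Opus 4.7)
The plan is to expand $K_X$ in the orthogonal basis of $H^2(X;\mathbb{Z})$ produced by the previous theorem and determine its coefficients by evaluating the intersection pairing against a dual basis of $H_2(X;\mathbb{Z})$. Writing
\begin{equation*}
K_X=\pi_M^X+\pi_N^X+\sum_{i=1}^d(s_iS_i+r_iR_i)+bB_X+\sigma\Sigma_X
\end{equation*}
with $\pi_M^X\in P(M)$ and $\pi_N^X\in P(N)$, the task is to compute the scalar coefficients and compare the result with the right-hand side of the claimed formula after writing $K_M$ and $K_N$ in the same basis via the embeddings $H^2(M),H^2(N)\hookrightarrow H^2(X)$.

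The $P(M)$, $P(N)$, $R_i$, and $S_i$ contributions come from locality and a triviality argument. Any $\alpha\in P(M)$ has a representative disjoint from $\nu\Sigma_M$, hence sitting in $M'\subset X$ where $TX$ is canonically identified with $TM$, so $K_X\cdot\alpha=K_M\cdot\alpha$; this identifies $\pi_M^X$ with the $P(M)$-component of $K_M$, and symmetrically for $P(N)$. A rim torus $R_i$ lies inside the tube $\Sigma\times S^1$, where $TX$ splits as a Whitney sum with a trivial rank-two factor, so $c_1(TX)$ vanishes on $R_i$ and $K_X\cdot R_i=0$; combined with the orthogonality relations $R_i\cdot S_j=\delta_{ij}$ and $R_i\cdot B_X=R_i\cdot\Sigma_X=0$, this forces $s_i=0$. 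A split class $S_i$ is represented by the closed surface $D_i^M\cup_{\gamma_i}(-D_i^N)\subset X$ obtained by sewing the relative chains $D_i^M,D_i^N$ along the identified curve $\gamma_i$; locality in each half, with the orientation reversal forced by $\phi$, yields $K_X\cdot S_i=K_M\cdot D_i^M-K_N\cdot D_i^N=t_i$, and therefore $r_i=t_i$.

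The coefficients $b$ and $\sigma$ are read off by adjunction. The push-off $\Sigma_X$ is a symplectic surface of genus $g$ with $\Sigma_X^2=0$, whence $K_X\cdot\Sigma_X=2g-2$ and $b=2g-2$. For $B_X$, after perturbing $B_M$ and $B_N$ so that they intersect $\Sigma_M,\Sigma_N$ transversally and positively in a single point, $B_X$ is realised as a surface of genus $g(B_M)+g(B_N)$ with $B_X^2=B_M^2+B_N^2$; adjunction (applied to a symplectic model of $B_X$, or replaced by a direct gluing computation of $c_1(TX)$ in the general case) then evaluates $K_X\cdot B_X$ and pins down $\sigma$. Embedding $K_M\in H^2(M)$ into $H^2(X)$ sends it to $\pi_M^X+(2g-2)B_X+\alpha_M\Sigma_X$ with $\alpha_M=K_M\cdot B_M-(2g-2)B_M^2$, and similarly $K_N\mapsto\pi_N^X+(2g-2)B_X+\alpha_N\Sigma_X'$; substituting these into the right-hand side of the corollary, the $2(2g-2)B_X$ contribution cancels against $-(2g-2)B_X$ and the $\Sigma_X,\Sigma_X'$ terms reproduce $\sigma$ exactly.

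The main obstacle will be the sign and framing bookkeeping. The push-offs $\Sigma_X$ and $\Sigma_X'$ depend on separate framings of the normal bundles of $\Sigma_M$ and $\Sigma_N$, and their identification under $\phi$ is canonical only up to rim-torus classes, so $\Sigma_X'$ generally differs from $\Sigma_X$ by a combination of $R_i$'s that must be tracked; likewise, the bounding chains $D_i^M,D_i^N$ are defined only modulo absolute 2-cycles, and the computation $K_X\cdot S_i=t_i$ is sensitive to these choices. Resolving these ambiguities requires using the explicit description of $\phi_*$ on the first homology of the boundaries established earlier in the paper, both to verify that $K_X\cdot R_i$ really vanishes in the decomposition and to confirm the stated sign in $t_i=K_MD_i^M-K_ND_i^N$.
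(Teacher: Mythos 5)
Your overall strategy is the same as the paper's (expand $K_X$ in the basis of Theorem \ref{formula H^2 no torsion}, determine the coefficients by intersecting with $\Sigma_X$, $B_X$, $R_i$, $S_i$ and classes in $P(M)$, $P(N)$, then substitute the embeddings of Corollary \ref{general case embedding HM HN in HX}), and the adjunction arguments for $b=2g-2$ and the $P(M)$, $P(N)$ components are fine. But there is a genuine gap at the one step that carries the real content: the claim that ``locality in each half'' gives $K_X\cdot S_i=K_MD_i^M-K_ND_i^N$. This is false for a twisted gluing. When $a_i=\langle C,\alpha_i\rangle\neq 0$, the curves $\phi\circ\alpha_i^M$ and $\alpha_i^N$ are not even homologous on $\partial N'$ (one has $\phi_*\alpha_i^M=\alpha_i^N+a_i\sigma^N$), so the closed surface $D_i^M\cup_{\gamma_i}(-D_i^N)$ you invoke does not exist; the split surface must contain an extra cap $U_i^N$ built from $a_i$ copies of $B_N$ (contributing $a_i(K_NB_N-(2g-2)B_N^2)$ to $K_NS_i^N$), and in addition the canonical bundle trivializations on the two sides do not match across the neck: the interpolating holomorphic $2$-form acquires a zero set representing $-\sum_{i=1}^{2g}a_i\Gamma_i^M+2\Sigma^M$ (Proposition \ref{prop Omega' hol 2-form}), which contributes a further $-a_i$ to $K_X\cdot S_i$ and the $+2$ in $K_X\cdot B_X=K_MB_M+K_NB_N+2$. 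The correct value in your $\Sigma_X$-basis is
\begin{equation*}
K_X\cdot S_i=K_MD_i^M-K_ND_i^N-a_i\bigl(K_NB_N+1-(2g-2)B_N^2\bigr),
\end{equation*}
and the rim-tori coefficients only become the $\phi$-independent numbers $t_i$ after the rewriting $\Sigma_X'=\Sigma_X+R_C$ with $R_C=-\sum_{i=1}^d a_iR_i$. Indeed, your own consistency check would expose this: expanding the right-hand side of the corollary in the $\Sigma_X$-basis yields rim coefficients $t_i-a_i(K_NB_N+1-(2g-2)B_N^2)$, which contradicts $r_i=t_i$ whenever some $a_i\neq 0$. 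Your closing paragraph correctly identifies exactly this ambiguity, but defers it rather than resolving it; in the paper this resolution (Corollary \ref{cor holom Omega on X} and the two lemmas computing $K_XB_X$ and $K_XS_i$) is the entire proof.

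Two smaller points. The same neck obstruction invalidates evaluating $K_X\cdot B_X$ by pure locality: your fallback via a symplectic model of $B_X$ and genus additivity gives the right number only if symplectic representatives of $B_M$, $B_N$, $B_X$ can be arranged, which you do not justify; the paper instead counts the zeros of the glued $2$-form on $B_M'\cup B_N'$ plus the $+2$ from the class $2\Sigma^M$ over the annulus. And your argument that $K_X\cdot R_i=0$ because $TX$ has a trivial real rank-two summand over the collar is not sufficient as stated: a trivial real splitting does not control $c_1$ unless it is compatible with the almost complex structure, and on the $M$-side collar the deformed structure $\Phi^*J$ mixes the $\Sigma$- and disk-directions through the derivatives of $C$. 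This is fixable (represent $R_i$ on the $N$-side where $J=J_D+J_\Sigma$ is a product, or use the paper's argument that rim tori are combinations of embedded Lagrangian tori, for which adjunction gives $K_X\cdot R_i=0$), but as written it is a heuristic, not a proof.
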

Here $X_0$ is the generalized fibre sum obtained from the trivial gluing diffeomorphism that identifies the push-offs. Note that the manifold $X_0$ and the surfaces $\Sigma_X$, $\Sigma_X'$ and $S_i$ depend on the choice of framings. However, we will see that the canonical class is independent of this choice and depends only on the abstract gluing diffeomorphism, as it should be. Using the formula for the canonical class and the intersection form, we can also directly check that
\begin{equation*}
K_X^2=K_M^2+K_N^2+8g-8.
\end{equation*}
This formula can be derived in an elementary way from the identity $K^2=2e+3\sigma$, where $e$ denotes the Euler characteristic and $\sigma$ the signature of the 4-manifold.

In particular, suppose that rim tori do not exist in $X$. Then we get 
\begin{equation*}
K_X=K_M+K_N+2\Sigma_X-(2g-2)B_X,
\end{equation*}
which generalizes the classical formula for the fibre sum along tori that can be found in the literature, e.g.~\cite{Smi}. In the final section we will consider a case in which there are rim tori but where the numbers $K_{X_0}S_i$ and hence the rim tori contribution can be calculated explicitly.

\subsection*{Acknowledgements} The content of this article is a revised version of a part of the author's Ph.D.~thesis. I am grateful to D.~Kotschick for supervising the thesis and to J.~Bowden for discussions about the canonical class. I would also like to thank the {\it Studienstiftung des deutschen Volkes} and the {\it Deutsche Forschungsgemeinschaft (DFG)} for financial support.

\section{Definition of the generalized fibre sum}\label{Sect Gompf sum general}

In the following, we use for a topological space $Y$ the abbreviations $H_*(Y)$ and $H^*(Y)$ to denote the homology and cohomology groups of $Y$ with $\mathbb{Z}$-coefficients. Other coefficients will be denoted explicitly. The homology class of an embedded, oriented surface and the surface itself are often denoted by the same symbol. Poincar\'e duality is often suppressed, so that a class and its Poincar\'e dual are denoted by the same symbol.

Let $M$ and $N$ be closed, oriented, connected 4-manifolds. Suppose that $\Sigma_M$ and $\Sigma_N$ are closed, oriented, connected embedded surfaces in $M$ and $N$ of the same genus $g$. Let $\nu\Sigma_M$ and $\nu\Sigma_N$ denote the normal bundles of $\Sigma_M$ and $\Sigma_N$. The normal bundle of the surface $\Sigma_M$ is trivial if and only if the self-intersection number $\Sigma_M^2$ is zero. This follows because the Euler class of the normal bundle is given by $e(\nu\Sigma_M)=i^*PD[\Sigma_M]$, where $i\colon  \Sigma_M\rightarrow M$ denotes the inclusion. Hence the evaluation of the Euler class on the fundamental class of $\Sigma_M$ is equal to the self-intersection number of $\Sigma_M$ in the 4-manifold $M$. From now on we will assume that $\Sigma_M$ and $\Sigma_N$ have zero self-intersection. 

For the construction of the generalized fibre sum we choose a closed oriented surface $\Sigma$ of genus $g$ and smooth embeddings
\begin{align*}
i_M\colon  \Sigma&\longrightarrow M \\
i_N\colon \Sigma&\longrightarrow N,
\end{align*}
with images $\Sigma_M$ and $\Sigma_N$. We assume that the orientation induced by the embeddings on $\Sigma_M$ and $\Sigma_N$ is the given one. 

Since the normal bundles of $\Sigma_M$ and $\Sigma_N$ are trivial, there exist trivial $D^2$-bundles $\nu\Sigma_M$ and $\nu\Sigma_N$ embedded in $M$ and $N$, forming closed tubular neighbourhoods for $\Sigma_M$ and $\Sigma_N$. We fix once and for all framings for the surfaces, i.e.~embeddings
\begin{align*}
\tau_M\colon \Sigma\times D^2&\longrightarrow M \\
\tau_N\colon \Sigma\times D^2&\longrightarrow N,
\end{align*} 
with images $\nu\Sigma_M$ and $\nu\Sigma_N$ which are fixed reference trivialisations for the normal bundles of the embedded surfaces $\Sigma_M$ and $\Sigma_N$. We denote the restriction of these maps to $\Sigma\times S^1$ by $\bar{\tau}_M$ and $\bar{\tau}_N$. They commute with the embeddings $i_M$ and $i_N$ above and the natural projections 
\begin{align*}
p\colon &\Sigma\times S^1\rightarrow \Sigma\\
p_M\colon &\partial\nu\Sigma_M\rightarrow \Sigma_M\\ 
p_N\colon &\partial\nu\Sigma_N\rightarrow \Sigma_N.
\end{align*}
Taking the image of $\Sigma$ times an arbitrary point on $S^1$, the framings $\bar{\tau}_M$ and $\bar{\tau}_N$ determine certain sections of the $S^1$-bundles $\partial\nu\Sigma_M$ and $\partial\nu\Sigma_N$. These sections correspond to {\em push-offs} of the surfaces $\Sigma_M$ and $\Sigma_N$ into the boundary of the tubular neighbourhoods. Since trivializations of vector bundles are linear, the framings are completely determined by such push-offs.

\begin{defn} We denote by $\Sigma^M$ and $\Sigma^N$ push-offs of $\Sigma_M$ and $\Sigma_N$ into the boundaries $\partial \nu \Sigma_M$ and $\partial \nu \Sigma_N$ given by the framings $\tau_M$ and $\tau_N$.
\end{defn}
We set 
\begin{align*}
M'&=M\setminus \text{int}\,\nu\Sigma_M\\
N'&=N\setminus \text{int}\,\nu\Sigma_N
\end{align*}
which are compact, oriented 4-manifolds with boundary. The orientations are chosen as follows: On $\Sigma\times D^2$ choose the orientation of $\Sigma$ followed by the standard orientation of $D^2$ given by $dx\wedge dy$. By choosing the orientations of $M$ and $N$ we can assume that the framings $\tau_M$ and $\tau_N$ induce orientation preserving embeddings of $\Sigma\times D^2$ into $M$ and $N$ as tubular neighbourhoods. We define the orientation of $\Sigma\times S^1$ to be the orientation of $\Sigma$ followed by the counter clockwise orientation of $S^1$. This determines orientations of $\partial M'$ and $\partial N'$. Both conventions together imply that the orientation of $\partial M'$ followed by the normal direction pointing out of $M'$ is the orientation of $M'$ induced from $M$. This is opposite to the standard orientation of $\partial M'$ (the normal direction pointing out of $M'$ followed by the standard orientation of $\partial M'$ is the orientation of $M'$). Similarly for $N$.

We want to glue $M'$ and $N'$ together using diffeomorphisms between the boundaries which preserve the fibration of the $S^1$-bundles $\partial\nu\Sigma_M$ and $\partial\nu\Sigma_N$ and cover the diffeomorphism $i_N\circ i_M^{-1}$. Since the group $\text{Diff}^+(S^1)$ retracts onto $SO(2)$, every orientation and fibre preserving diffeomorphism $\Sigma\times S^1\rightarrow \Sigma\times S^1$ covering the identity is isotopic to a diffeomorphism of the form
\begin{equation}\label{equation for C}
\begin{split}
F\colon \Sigma\times S^1&\rightarrow \Sigma\times S^1,\\
(z,\alpha)&\mapsto (z,C(z)\cdot\alpha),
\end{split}
\end{equation}
where $C\colon \Sigma\rightarrow S^1$ is a map and multiplication is in the group $S^1$. Conversely, every smooth map $C$ from $\Sigma$ to $S^1$ defines such an orientation and fibre  preserving diffeomorphism. Let $r$ denote the orientation reversing diffeomorphism
\begin{equation*}
r\colon \Sigma\times S^1\rightarrow \Sigma\times S^1, (z,\alpha)\mapsto (z,\overline{\alpha}),
\end{equation*}
where $S^1\subset\mathbb{C}$ is embedded in the standard way and $\overline{\alpha}$ denotes complex conjugation. Then the diffeomorphism
\begin{align*}
\rho=F\circ r\colon \Sigma\times S^1&\rightarrow \Sigma\times S^1,\\
(z,\alpha)&\mapsto (z,C(z)\overline{\alpha})
\end{align*}
is orientation reversing. We define 
\begin{equation}\label{rho phi}
\phi=\phi_C=\bar{\tau}_N\circ\rho\circ \bar{\tau}_M^{-1}.
\end{equation}
Then $\phi$ is an orientation reversing diffeomorphism $\phi\colon  \partial\nu\Sigma_M\rightarrow \partial\nu\Sigma_N$, preserving the circle fibres. If $C$ is a constant map then $\phi$ is a diffeomorphism which identifies push-offs of $\Sigma_M$ and $\Sigma_N$.

\begin{defn} Let $M$ and $N$ be closed, oriented, connected 4-manifolds $M$ and $N$ with embedded oriented surfaces $\Sigma_M$ and $\Sigma_N$ of genus $g$ and self-intersection $0$. The {\em generalized fibre sum} of $M$ and $N$ along $\Sigma_M$ and $\Sigma_N$, determined by the diffeomorphism $\phi$, is given by
\begin{equation*}
X(\phi)=M'\cup_{\phi} N'.
\end{equation*}
$X(\phi)$ is again a differentiable, closed, oriented, connected 4-manifold.
\end{defn}
See the references \cite{Go} and \cite{McW} for the original construction. The generalized fibre sum is often denoted by $M\#_{\Sigma_M=\Sigma_N}N$ or $M\#_\Sigma N$ and is also called the Gompf sum or the normal connected sum. 

The differentiable structure on $X$ is defined in the following way: We identify the interior of slightly larger tubular neighbourhoods $\nu\Sigma_M'$ and $\nu\Sigma_N'$ via the framings $\tau_M$ and $\tau_N$ with $\Sigma\times D$ where $D$ is an open disk of radius $1$. We think of $\partial M'$ and $\partial N'$ as the 3-manifold $\Sigma\times S$, where $S$ denotes the circle of radius $\scriptstyle{\frac{1}{\sqrt{2}}}$. Hence the tubular neighbourhoods $\nu\Sigma_M$ and $\nu\Sigma_N$ above have in this convention radius $\scriptstyle{\frac{1}{\sqrt{2}}}$. We also choose polar coordinates $r,\theta$ on $D$. The manifolds $M\setminus\Sigma_M$ and $N\setminus \Sigma_N$ are glued together along $\text{int}\,\nu\Sigma_M'\setminus \Sigma_M$ and $\text{int}\,\nu\Sigma_N'\setminus \Sigma_N$ by the diffeomorphism
\begin{equation}\label{eq gluing diff Phi}
\begin{split}
\Phi\colon \Sigma\times (D\setminus \{0\})&\rightarrow \Sigma\times (D\setminus\{0\}) \\
(z, r,\theta)&\mapsto (z, \sqrt{1-r^2}, C(z)-\theta).
\end{split}
\end{equation}
This diffeomorphism is orientation {\em preserving} because it reverses on the disk the orientation on the boundary circle and the inside-outside direction. It preserves the fibration by punctured disks and identifies $\partial M'$ and $\partial N'$ via $\phi$. 

\begin{defn} Let $\Sigma_X$ denote the genus $g$ surface in $X$ given by the image of the push-off $\Sigma^M$ under the inclusion $M'\rightarrow X$. Similarly, let $\Sigma_X'$ denote the genus $g$ surface in $X$ given by the image of the push-off $\Sigma^N$ under the inclusion $N'\rightarrow X$.
\end{defn}

In general (depending on the diffeomorphism $\phi$ and the homology of $X$) the surfaces $\Sigma_X$ and $\Sigma_X'$ do not represent the same homology class in $X$ but differ by a rim torus, cf.~Lemma \ref{lem difference sigmaXX' RC}.

\subsection{Isotopic gluing diffeomorphisms}
Different choices of gluing diffeomorphisms $\phi$ can result in non-diffeomorphic manifolds $X(\phi)$. However, if $\phi$ and $\phi'$ are isotopic, then $X(\phi)$ and $X(\phi')$ are diffeomorphic. We want to determine how many different isotopy classes of gluing diffeomorphisms $\phi$ of the form above exist (note that the manifold $X$ of course also depends on the choice of the embeddings $i_M$ and $i_N$ realizing the surfaces in $M$ and $N$ as the image of a fixed surface $\Sigma$). We first make the following definition:
\begin{defn} Let $C\colon \Sigma\rightarrow S^1$ be the map used to define the gluing diffeomorphism $\phi$ in equation \eqref{equation for C}. Then the integral cohomology class $[C]\in H^1(\Sigma)$ is defined by pulling back the standard generator of $H^1(S^1)$. We sometimes denote $[C]$ by $C$ if a confusion is not possible.
\end{defn}
Suppose that
\begin{equation*}
C, C'\colon \Sigma\rightarrow S^1,
\end{equation*}
are smooth maps which determine self-diffeomorphisms $\rho$ and $\rho'$ of $\Sigma\times S^1$ and gluing diffeomorphisms $\phi, \phi'\colon \partial \nu\Sigma_M\rightarrow \partial\nu\Sigma_N$ as before. 
\begin{prop}\label{prop [C] determines phi} The diffeomorphisms $\phi,\phi'\colon  \partial\nu\Sigma_M\longrightarrow \partial\nu\Sigma_N$ are smoothly isotopic if and only if $[C]=[C']\in H^1(\Sigma)$. In particular, if the maps $C$ and $C'$ determine the same cohomology class, then the generalized fibre sums $X(\phi)$ and $X(\phi')$ are diffeomorphic.
\end{prop}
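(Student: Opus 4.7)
The plan is to reduce Proposition \ref{prop [C] determines phi} to the fact that smooth homotopy classes of maps $\Sigma \to S^1$ correspond bijectively to elements of $H^1(\Sigma;\mathbb{Z})$, using that $S^1$ is a $K(\mathbb{Z},1)$. Since $\tau_M$, $\tau_N$, and $r$ are fixed in the construction \eqref{rho phi}, the assignment $\phi \mapsto F$, where $F(x,\alpha) = (x, C(x)\alpha)$, identifies isotopy classes of the gluing diffeomorphisms $\phi$ with isotopy classes of the fibre-preserving, orientation-preserving self-diffeomorphisms $F$ of $\Sigma\times S^1$ covering the identity. So the proposition reduces to showing that $F$ and $F'$ are smoothly isotopic as self-diffeomorphisms of $\Sigma\times S^1$ if and only if $[C]=[C']\in H^1(\Sigma;\mathbb{Z})$.

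For the ``if'' direction, I would produce a smooth homotopy $C_t \colon \Sigma \to S^1$ between $C$ and $C'$, which exists because $[\Sigma,S^1]\cong H^1(\Sigma;\mathbb{Z})$ and because continuous homotopies between smooth maps can be smoothed. Then the family
\[
F_t(x,\alpha) = (x, C_t(x)\alpha)
\]
is automatically a smooth isotopy from $F$ to $F'$ through fibre-preserving diffeomorphisms, and after composing with $r$ and conjugating by $\tau_M,\tau_N$ this yields an isotopy from $\phi$ to $\phi'$. The resulting isotopy of gluing diffeomorphisms extends, by a standard isotopy-extension argument applied in a collar of the gluing hypersurface $\Sigma\times S^1$, to a diffeomorphism $X(\phi)\cong X(\phi')$, giving the last sentence of the proposition for free.

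For the ``only if'' direction, the key step is to identify a homology invariant of $F$ that recovers $[C]$. Writing $H_1(\Sigma\times S^1;\mathbb{Z}) = H_1(\Sigma;\mathbb{Z})\oplus \mathbb{Z}[S^1]$, the map $F_*$ fixes $[S^1]$ (since $F$ preserves each fibre $\{x\}\times S^1$ by a rotation, hence acts by degree one there) and sends a class $[\gamma]$ coming from $\Sigma\times\{\mathrm{pt}\}$ to
\[
F_*[\gamma] = [\gamma] + \langle [C],[\gamma]\rangle\,[S^1],
\]
where $\langle\cdot,\cdot\rangle$ is the Kronecker pairing. Smoothly isotopic diffeomorphisms induce the same map on integral homology; so if $F$ and $F'$ are isotopic, $\langle[C],[\gamma]\rangle=\langle[C'],[\gamma]\rangle$ for every $[\gamma]\in H_1(\Sigma;\mathbb{Z})$, and hence $[C]=[C']$ by the universal coefficient theorem, since $H^1(\Sigma;\mathbb{Z})$ is torsion-free.

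The main obstacle I anticipate is not conceptual but bookkeeping: verifying the shear formula for $F_*[\gamma]$ cleanly. One can see it geometrically by noting that $F(\gamma\times\{1\}) = \{(x, C(x)) : x\in\gamma\}$ is the graph of $C|_\gamma$, which is homologous in $\Sigma\times S^1$ to the union of $\gamma\times\{1\}$ with $\deg(C|_\gamma)$ copies of a fibre $\{x_0\}\times S^1$, and the degree of $C|_\gamma\colon\gamma\to S^1$ is precisely $\langle[C],[\gamma]\rangle$. With that lemma in hand, both implications are immediate.
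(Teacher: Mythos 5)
Your proposal is correct, and the ``if'' direction coincides with the paper's: both choose a smooth homotopy $C_t$ between $C$ and $C'$ (using $[\Sigma,S^1]\cong H^1(\Sigma;\mathbb{Z})$) and observe that $(x,\alpha)\mapsto(x,C_t(x)\cdot\overline{\alpha})$ is automatically an isotopy through diffeomorphisms, transported to an isotopy $\phi\simeq\phi'$ by the fixed framings. Where you genuinely diverge is the ``only if'' direction. The paper recovers $C$ from $\rho$ pointwise: writing $C=pr\circ\rho\circ\iota$ with $\iota(x)=(x,1)$ and $pr$ the projection to $S^1$, an isotopy (hence homotopy) $\rho\simeq\rho'$ immediately gives a homotopy $C\simeq C'$ and thus $[C]=[C']$ --- no homology computation needed. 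You instead detect $[C]$ through the induced map on $H_1(\Sigma\times S^1)$ via the shear formula $F_*[\gamma]=[\gamma]+\langle[C],[\gamma]\rangle[S^1]$, concluding by the universal coefficient theorem since $H^1(\Sigma;\mathbb{Z})$ is torsion free. Your shear lemma and its graph-of-$C|_\gamma$ verification are correct (it is essentially the paper's Lemma \ref{act phi hom 1}), and your route buys something slightly stronger: the isotopy class of $\phi$ is already detected by its action on first homology, which prefigures the paper's Proposition \ref{diffeom phi determined by C} (there deduced from the $H_2$-action). The trade-off is that the paper's argument is shorter and needs only ``isotopic implies homotopic,'' while yours needs the extra bookkeeping lemma. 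You also spell out, via isotopy extension in a collar, why isotopic gluing maps yield diffeomorphic fibre sums --- a point the paper states without proof --- which is a welcome addition rather than a deviation.
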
 
\begin{proof} Suppose that $\phi$ and $\phi'$ are isotopic. The equation
\begin{equation*}
\rho=\bar{\tau}^{-1}_N\circ\phi\circ\bar{\tau}_M,
\end{equation*}
implies that the diffeomorphisms $\rho,\rho'$ are also isotopic, hence homotopic. The maps $C,C'$ can be written as
\begin{equation*}
C=pr\circ\rho\circ \iota,\quad C'=pr\circ\rho'\circ\iota,
\end{equation*}
where $\iota\colon \Sigma\rightarrow\Sigma\times S^1$ denotes the inclusion $x\mapsto (x,1)$ and $pr$ denotes the projection onto the second factor in $\Sigma\times S^1$. This implies that $C$ and $C'$ are homotopic, hence the cohomology classes $[C]$ and $[C']$ coincide. 

Conversely, if the cohomology classes $[C]$ and $[C']$ coincide, then $C$ and $C'$ are homotopic maps. We can choose a smooth homotopy
\begin{align*}
\Delta\colon \Sigma\times [0,1]&\longrightarrow S^1,\\
(x,t)&\mapsto \Delta(x,t)
\end{align*}
with $\Delta_0=C$ and $\Delta_1=C'$. Define the map
\begin{align*}
R\colon (\Sigma\times S^1)\times [0,1]&\longrightarrow \Sigma\times S^1, \\
(x,\alpha,t)&\mapsto  R_t(x,\alpha),
\end{align*}
where 
\begin{equation*}
R_t(x,\alpha)=(x,\Delta(x,t)\cdot \overline{\alpha}).
\end{equation*}
Then $R$ is a homotopy between $\rho$ and $\rho'$. The maps $R_t\colon \Sigma\times S^1\rightarrow \Sigma\times S^1$ are diffeomorphisms with inverse
\begin{equation*}
(y,\beta)\mapsto (y,\overline{\Delta(y,t)^{-1}\cdot\beta}),
\end{equation*}
where $\Delta(y,t)^{-1}$ denotes the inverse as a group element in $S^1$. Hence $R$ is an isotopy between $\rho$ and $\rho'$ that defines via the framings $\tau_M$ and $\tau_N$ an isotopy between $\phi$ and $\phi'$.
\end{proof}

\subsection{Action of the gluing diffeomorphism on homology}\label{subsect action phi}

In this subsection we determine the action of the gluing diffeomorphism $\phi\colon \partial M'\rightarrow \partial N'$ on the homology of the boundaries $\partial M'$ and $\partial N'$. We first choose bases for these homology groups using the framings $\tau_M$ and $\tau_N$.

First choose a basis for $H_1(\Sigma)$ consisting of oriented embedded loops $\gamma_1,\dotsc,\gamma_{2g}$ in $\Sigma$. For each index $i$, we denote the loop $\gamma_i\times\{*\}$ in $\Sigma\times S^1$ also by $\gamma_i$. Let $\sigma$ denote the loop $\{*\}\times S^1$ in $\Sigma\times S^1$. Then the loops
\begin{equation*}
\gamma_1,\dotsc,\gamma_{2g},\sigma
\end{equation*}
represent homology classes (denoted by the same symbols) which determine a basis for $H_1(\Sigma\times S^1)\cong\mathbb{Z}^{2g+1}$. 
\begin{defn} The basis for the first homology of $\partial\nu\Sigma_M$ and $\partial\nu\Sigma_N$ is chosen as:
\begin{align*}
\gamma_i^M={\bar{\tau}_M}{_*}\gamma_i,&\quad \sigma^M={\bar{\tau}_M}{_*}\sigma\\
\gamma_i^N={\bar{\tau}_N}{_*}\gamma_i,&\quad \sigma^N={\bar{\tau}_N}{_*}\sigma.
\end{align*}
\end{defn}

For $H_2(\Sigma\times S^1)$ we define a basis consisting of elements $\Gamma_1,\dotsc,\Gamma_{2g},\Sigma$ such that the intersection numbers satisfy the following equations:
\begin{align*}
\Gamma_i\cdot\gamma_i&=1,\quad i=1,\dotsc,2g,\\
\Sigma\cdot\sigma&=1,
\end{align*}
and all other intersection numbers are zero. By Poincar\'e duality, this basis can be realized by choosing the dual basis $\gamma_1^*,\dotsc,\gamma_{2g}^*,\sigma^*$ in the group  
\begin{equation*}
H^1(\Sigma\times S^1)=\text{Hom}(H_1(\Sigma\times S^1),\mathbb{Z})
\end{equation*}
to the basis for the first homology above and defining
\begin{align*}
\Gamma_i&=PD(\gamma_i^*),\quad i=1,\dotsc,2g,\\
\Sigma&=PD(\sigma^*).
\end{align*}
\begin{defn} The basis for the second homology of $\partial\nu\Sigma_M$ and $\partial\nu\Sigma_N$ is defined as:
\begin{align*}
\Gamma_i^M={\bar{\tau}_M}{_*}\Gamma_i,&\quad \Sigma^M={\bar{\tau}_M}{_*}\Sigma\\
\Gamma_i^N={\bar{\tau}_N}{_*}\Gamma_i,&\quad \Sigma^N={\bar{\tau}_N}{_*}\Sigma.
\end{align*}
\end{defn}
The class $\Sigma^M$ can be represented by a push-off of $\Sigma_M$ determined by the framing $\tau_M$. The classes $\Gamma_i^M$ have the following interpretation: We can choose a basis $\pi_1,\dotsc,\pi_{2g}$ of $H_1(\Sigma)$ such that
\begin{equation*}
\pi_i\cdot\gamma_j=-\delta_{ij}
\end{equation*}
for all indices $i,j$. The existence of such a basis follows because the intersection form on $H_1(\Sigma)$ is symplectic. The intersection numbers of the immersed tori $\pi_i\times \sigma$ in $\Sigma\times S^1$ with the curves $\gamma_j$ are given by
\begin{equation*}
(\pi_i\times\sigma)\cdot\gamma_j=\delta_{ij},
\end{equation*}
hence these tori represent the classes $\Gamma_i$.

\begin{defn}\label{defn coeff ai} For the basis $\gamma_1,\dotsc,\gamma_{2g}$ of $H_1(\Sigma)$ above define the integers
\begin{align*}
a_i&=\text{deg}(C\circ \gamma_i\colon S^1\rightarrow S^1) \\
                     &=\langle [C],\gamma_i\rangle =\langle C,\gamma_i\rangle \in\mathbb{Z}.
\end{align*}
The integers $a_i$ together determine the cohomology class $[C]$. Since the map $C$ can be chosen arbitrarily, the integers $a_i$ can (independently) take any possible value.
\end{defn}

\begin{lem}\label{act phi hom 1} The map $\phi_*\colon H_1(\partial\nu\Sigma_M)\rightarrow H_1(\partial\nu\Sigma_N)$ is given by
\begin{align*}
\phi_*\gamma_i^M&= \gamma_i^N+a_i\sigma^N,\quad i=1,\dotsc,2g\\
\phi_*\sigma^M&= -\sigma^N.
\end{align*}
\end{lem}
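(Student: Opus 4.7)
My plan is to reduce the claim to a direct homology computation of the action of the model diffeomorphism $\rho\colon\Sigma\times S^1\to\Sigma\times S^1$, $(x,\alpha)\mapsto(x,C(x)\overline{\alpha})$, and then transport the result to $\partial\nu\Sigma_M$ and $\partial\nu\Sigma_N$ via the framings. Since $\phi=\tau_N\circ\rho\circ\tau_M^{-1}$ by \eqref{rho phi}, and the bases $\gamma_i^M,\sigma^M$ (resp.\ $\gamma_i^N,\sigma^N$) are defined as the pushforwards of $\gamma_i,\sigma$ under $\tau_M$ (resp.\ $\tau_N$), we have
\begin{equation*}
\phi_*\gamma_i^M={\tau_N}_*\rho_*\gamma_i,\qquad \phi_*\sigma^M={\tau_N}_*\rho_*\sigma,
\end{equation*}
so everything reduces to computing $\rho_*\gamma_i$ and $\rho_*\sigma$ in $H_1(\Sigma\times S^1)$.

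For $\sigma=\{*\}\times S^1$, the loop $\alpha\mapsto\rho(*,\alpha)=(*,C(*)\overline{\alpha})$ lies entirely in the fibre $\{*\}\times S^1$ and, because of the complex conjugation, traverses it once in the clockwise direction; hence $\rho_*\sigma=-\sigma$. For $\gamma_i=\gamma_i\times\{1\}$, the image loop $t\mapsto(\gamma_i(t),C(\gamma_i(t)))$ has first-factor projection equal to $\gamma_i$ and second-factor projection equal to $C\circ\gamma_i\colon S^1\to S^1$, which has degree $a_i$ by definition. By the Künneth decomposition $H_1(\Sigma\times S^1)\cong H_1(\Sigma)\oplus H_0(\Sigma)\otimes H_1(S^1)$, or equivalently by reading off the intersection numbers against the basis $\Gamma_j,\Sigma$ of $H_2$, this loop represents the class $\gamma_i+a_i\sigma$. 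Thus $\rho_*\gamma_i=\gamma_i+a_i\sigma$.

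Pushing these formulas forward under $\tau_N$ and using the basis definitions yields
\begin{equation*}
\phi_*\gamma_i^M=\gamma_i^N+a_i\sigma^N,\qquad \phi_*\sigma^M=-\sigma^N,
\end{equation*}
which is exactly the statement of the lemma.

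The only mild subtlety is ensuring that the class represented by the loop $t\mapsto(\gamma_i(t),C(\gamma_i(t)))$ is correctly identified as $\gamma_i+a_i\sigma$ rather than some other combination; this is where the definition $a_i=\deg(C\circ\gamma_i)=\langle[C],\gamma_i\rangle$ is used, and it can be verified cleanly by evaluating the dual cohomology basis $\gamma_j^*,\sigma^*$ on the loop. The sign in $\rho_*\sigma=-\sigma$ likewise comes directly from the orientation-reversing nature of complex conjugation on $S^1$ with the counter-clockwise orientation fixed earlier.
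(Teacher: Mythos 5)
Your proposal is correct and follows essentially the same route as the paper: both reduce to computing $\rho_*\gamma_i=\gamma_i+a_i\sigma$ and $\rho_*\sigma=-\sigma$ on the model $\Sigma\times S^1$ and then transport the result through the framings via $\phi=\tau_N\circ\rho\circ\tau_M^{-1}$. Your verification of the class $\gamma_i+a_i\sigma$ by evaluating the dual basis (or Künneth) is just a more explicit spelling-out of the step the paper compresses into ``implies on differentiation.''
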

\begin{proof} We have
\begin{equation*}
\rho(\gamma_i(t),*)=(\gamma_i(t),(C\circ \gamma_i)(t)\cdot\overline{*}),
\end{equation*}
which implies on differentiation $\rho_*\gamma_i=\gamma_i+a_i\sigma$ for all $i=1,\dotsc,2g$. Similarly, 
\begin{equation*}
\rho (*,t)=(*,C(*)\cdot\overline{t}),
\end{equation*}
which implies  $\rho_*\sigma=-\sigma$. The claim follows from these equations and equation \eqref{rho phi}. 
\end{proof} 

\begin{lem}\label{computation action phi H_2}
The map $\phi_*\colon H_2(\partial\nu\Sigma_M)\rightarrow H_2(\partial\nu\Sigma_N)$ is given by
\begin{align*}
\phi_*\Gamma_i^M&=-\Gamma_i^N,\quad i=1,\dotsc,2g\\
\phi_*\Sigma^M&=-\left(\sum_{i=1}^{2g}a_i\Gamma_i^N\right)+\Sigma^N.
\end{align*}
\end{lem}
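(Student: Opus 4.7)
The approach I would take is to exploit Poincaré duality on the 3-manifolds $\partial\nu\Sigma_M$ and $\partial\nu\Sigma_N$, since the bases $\{\Gamma_i^M,\Sigma^M\}$ and $\{\Gamma_i^N,\Sigma^N\}$ of $H_2$ were defined precisely to be dual to the bases $\{\gamma_i^M,\sigma^M\}$ and $\{\gamma_i^N,\sigma^N\}$ of $H_1$ under the intersection pairing. The key naturality statement is that for an orientation-reversing diffeomorphism $\phi$ between closed oriented 3-manifolds and any $\alpha\in H_2$, $\beta\in H_1$, one has
\begin{equation*}
\phi_*(\alpha)\cdot\beta = -\alpha\cdot\phi_*^{-1}(\beta).
\end{equation*}
This reduces the computation of $\phi_*$ on $H_2$ to knowing $\phi_*^{-1}$ on $H_1$, which is already at hand from Lemma \ref{act phi hom 1}.

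The first step is therefore to invert the formulae of Lemma \ref{act phi hom 1}. A direct check shows
\begin{equation*}
\phi_*^{-1}\gamma_i^N=\gamma_i^M+a_i\sigma^M,\qquad \phi_*^{-1}\sigma^N=-\sigma^M.
\end{equation*}
The second step is to evaluate the four relevant intersection pairings, using the defining equalities $\Gamma_i^M\cdot\gamma_j^M=\delta_{ij}$, $\Gamma_i^M\cdot\sigma^M=0$, $\Sigma^M\cdot\gamma_j^M=0$, $\Sigma^M\cdot\sigma^M=1$. For example,
\begin{equation*}
\phi_*\Gamma_i^M\cdot\gamma_j^N = -\Gamma_i^M\cdot(\gamma_j^M+a_j\sigma^M) = -\delta_{ij},
\end{equation*}
and the remaining three pairings are computed in the same spirit. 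The third and final step is to read off the expansion of $\phi_*\Gamma_i^M$ and $\phi_*\Sigma^M$ in the basis $\{\Gamma_k^N,\Sigma^N\}$: since this basis is intersection-dual to $\{\gamma_k^N,\sigma^N\}$, the coefficients are precisely the intersection numbers just computed, yielding the two claimed formulae.

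The main point requiring care is the orientation-reversing sign in the duality identity above. It is not an obstacle in any deep sense, but it is the reason for the minus signs in the statement of the lemma, so I would want to verify it once carefully (for instance, by choosing the matching orientations of $\partial M'$ and $\partial N'$ fixed in Section \ref{Sect Gompf sum general} and using that $\phi$ reverses them, or by noting that the Poincaré duality isomorphism is natural with respect to orientation-preserving maps only). An alternative route, which avoids invoking intersection-duality abstractly, is to compute $\rho_*$ on representative cycles directly: $\Gamma_i$ is represented by the torus $\pi_i\times\sigma$ with $\pi_i\cdot\gamma_j=-\delta_{ij}$, and $\rho(x,\alpha)=(x,C(x)\overline{\alpha})$ maps this torus onto itself with local Jacobian determinant $-1$, giving $\rho_*\Gamma_i=-\Gamma_i$; for $\Sigma$ one identifies $\rho(\Sigma\times\{1\})$ as the graph of $C$ and computes its class via intersections with $\gamma_j$ and $\sigma$. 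Either way one concludes by applying $(\tau_N)_*$ and the formula $\phi=\tau_N\circ\rho\circ\tau_M^{-1}$ to transfer the result to the boundaries $\partial\nu\Sigma_M$ and $\partial\nu\Sigma_N$.
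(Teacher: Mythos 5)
Your proof is correct and is essentially the paper's own argument: the paper likewise reduces the $H_2$-action to the known $H_1$-action by dualizing the inverse map and inserting the sign coming from $\rho_*[\Sigma\times S^1]=-[\Sigma\times S^1]$, its cap-product identity $\rho_*PD(\mu)=-PD((\rho^{-1})^*\mu)$ being exactly your intersection-pairing identity $\phi_*\alpha\cdot\beta=-\alpha\cdot\phi_*^{-1}\beta$ read through the dual bases $\{\Gamma_i,\Sigma\}$ and $\{\gamma_i,\sigma\}$. The only cosmetic difference is that you compute intersection numbers directly on $\partial M'$ and $\partial N'$, whereas the paper computes $(\rho^{-1})^*$ on $H^1(\Sigma\times S^1)$ and transfers the result via the framings $\tau_M$, $\tau_N$.
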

\begin{proof} We first compute the action of $\rho$ on the first cohomology of $\Sigma\times S^1$. By the proof of Lemma \ref{act phi hom 1}, 
\begin{align*}
(\rho^{-1})_*\gamma_i&= \gamma_i+a_i\sigma,\quad i=1,\dotsc,2g\\
(\rho^{-1})_*\sigma&=-\sigma.
\end{align*}
We claim that
\begin{align*}
(\rho^{-1})^*(\gamma_i^*)&= \gamma_i^*,\quad i=1,\dotsc,2g,\\
(\rho^{-1})^*(\sigma^*)&= \left(\sum_{i=1}^{2g}a_i\gamma_i^*\right)-\sigma^*.
\end{align*}
This is easy to check by evaluating both sides on the given basis of $H_1(\Sigma\times S^1)$ and using $\langle (\rho^{-1})^*\mu, v\rangle = \langle \mu, (\rho^{-1})_*v\rangle$. By the formula
\begin{equation}\label{cupcap}
\lambda_*(\lambda^*\alpha\cap\beta)=\alpha\cap\lambda_*\beta,
\end{equation}
for continuous maps $\lambda$ between topological spaces, homology classes $\beta$ and cohomology classes $\alpha$ (see \cite[Chapter VI, Theorem 5.2]{Br}), we get for all classes $\mu\in H^*(\Sigma\times S^1)$,
\begin{equation}\label{or rev cup cap}
\begin{split}
\rho_*PD(\rho^*\mu)&=\rho_*(\rho^*\mu\cap [\Sigma\times S^1])\\
&=\mu\cap \rho_*[\Sigma\times S^1] \\
&=-\mu\cap [\Sigma\times S^1]\\
&=-PD(\mu).  
\end{split}
\end{equation}
since $\rho$ is orientation reversing. This implies $\rho_*PD(\mu)=-PD((\rho^{-1})^*\mu)$ and hence
\begin{align*}
\rho_*\Gamma_i&=-\Gamma_i,\quad i=1,\dotsc,2g,\\
\rho_*\Sigma&=-\left(\sum_{i=1}^{2g}a_i\Gamma_i\right)+\Sigma.
\end{align*}
The claim follows from this.
\end{proof}
\begin{prop}\label{diffeom phi determined by C} The diffeomorphism $\phi$ is determined up to isotopy by the difference of the homology classes $\phi_*\Sigma^M$ and $\Sigma^N$ in $\partial \nu \Sigma_N$. 
\end{prop}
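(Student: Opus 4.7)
The plan is to combine Lemma \ref{computation action phi H_2} with Proposition \ref{prop [C] determines phi}. By Proposition \ref{prop [C] determines phi}, the isotopy class of $\phi$ is determined by the cohomology class $[C] \in H^1(\Sigma)$, which in turn is determined by the integers $a_i = \langle [C], \gamma_i \rangle$ for $i=1,\dotsc,2g$. So it suffices to show that the integers $a_i$ can be recovered from the difference $\phi_*\Sigma^M - \Sigma^N \in H_2(\partial\nu\Sigma_N)$.

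First I would apply Lemma \ref{computation action phi H_2} directly, which gives
\begin{equation*}
\phi_*\Sigma^M - \Sigma^N = -\sum_{i=1}^{2g} a_i \Gamma_i^N.
\end{equation*}
Since $\Gamma_1^N,\dotsc,\Gamma_{2g}^N,\Sigma^N$ form a $\mathbb{Z}$-basis of $H_2(\partial\nu\Sigma_N)$, the classes $\Gamma_1^N,\dotsc,\Gamma_{2g}^N$ are in particular linearly independent, so the coefficients $a_i$ are uniquely determined by the element $\phi_*\Sigma^M - \Sigma^N$. Hence knowledge of this difference determines $[C]$, and by Proposition \ref{prop [C] determines phi} it determines $\phi$ up to smooth isotopy.

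There is no serious obstacle here: the argument is a direct read-off from Lemma \ref{computation action phi H_2} once one observes that the relevant $\Gamma_i^N$ appear with coefficients $-a_i$ and nothing else. The only mild point worth noting in the write-up is that although $\phi_*\Sigma^M$ alone already contains this information (because the $\Sigma^N$-component of $\phi_*\Sigma^M$ is automatically $+1$), it is natural to phrase the statement in terms of the difference $\phi_*\Sigma^M - \Sigma^N$, since this difference is supported entirely on the subgroup generated by the $\Gamma_i^N$ and measures exactly the failure of $\phi$ to identify the two push-offs $\Sigma^M$ and $\Sigma^N$.
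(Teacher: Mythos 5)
Your proof is correct and follows essentially the same route as the paper: both apply Lemma \ref{computation action phi H_2} to read off the coefficients $a_i$ from the difference $\phi_*\Sigma^M-\Sigma^N$, which determines $[C]$, and then invoke Proposition \ref{prop [C] determines phi}. Your write-up merely makes explicit the linear-independence of the $\Gamma_i^N$, which the paper leaves implicit.
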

\begin{proof} This follows because by the formula in Lemma \ref{computation action phi H_2} above, the difference determines the coefficients $a_i$. Hence it determines the class $[C]$ and by Proposition \ref{prop [C] determines phi} the diffeomorphism $\phi$ up to isotopy.
\end{proof}
\begin{lem}\label{computation action phi H^2} The map $\phi^*\colon H^2(\partial\nu\Sigma_N)\rightarrow H^2(\partial\nu\Sigma_M)$ is given by
\begin{align*}
\phi^*PD(\gamma_i^N)&=-PD(\gamma_i^M)-a_iPD(\sigma^M),\quad i=1,\dotsc,2g\\
\phi^*PD(\sigma^N)&=PD(\sigma^M).
\end{align*}
\end{lem}
\begin{proof}
From equation \eqref{or rev cup cap} we have
\begin{equation*}
\rho^*PD(\alpha)=-PD((\rho^{-1})_*\alpha)
\end{equation*}
for $\alpha\in H_1(\Sigma\times S^1)$. Together with the first two equations in the proof of Lemma \ref{computation action phi H_2} this implies the claim. 
\end{proof}
We fix the following notation for some inclusions:
\begin{align*}
\rho_M\colon &M'\rightarrow M\\
\mu_M\colon &\partial\nu \Sigma_M\rightarrow M'\\
j_M\colon &\sigma^M \rightarrow M'\\
\eta_M\colon &M'\rightarrow X,
\end{align*}
and corresponding maps for $N$. For simplicity, the maps induced on homology and homotopy groups will often be denoted by the same symbol.

\section{The homology and cohomology of $M'$}\label{section hom cohom of M'}

Let $M$ be a closed, oriented $4$-manifold and $\Sigma_M\subset M$ a closed, oriented, connected embedded surface of genus $g$ and self-intersection zero. We denote a closed tubular neighbourhood of $\Sigma_M$ by $\nu \Sigma_M$ and let $M'$ denote the complement
\begin{equation*}
M'=M\setminus \text{int}\,\nu \Sigma_M.
\end{equation*}
Then $M'$ is an oriented manifold with boundary $\partial\nu \Sigma_M$. As above we choose a fixed closed oriented surface $\Sigma$ of genus $g$ and an embedding 
\begin{equation*}
i\colon \Sigma\rightarrow M
\end{equation*}
with image $\Sigma_M$. We continue to use the same notations for the framing and the embeddings as in the previous section. However, we will often drop in this section the index $M$ on the maps to keep the formulae notationally more simple.

We always assume in this section that the surface $\Sigma_M$ represents a non-torsion class, denoted by the same symbol $\Sigma_M\in  H_2(M)$. On the closed 4-manifold $M$, the Poincar\'e dual of $\Sigma_M$ acts as a homomorphism on $H_2(M)$, 
\begin{equation*}
\langle PD(\Sigma_M),-\rangle\colon H_2(M)\longrightarrow \mathbb{Z}.
\end{equation*}
Since the homology class $\Sigma_M$ is non-torsion, the image of this homomorphism is non-zero and hence a subgroup of $\mathbb{Z}$ of the form $k_M\mathbb{Z}$ with $k_M>0$. We assume that $\Sigma_M$ is divisible by $k_M$, i.e.~there exists a class $A_M\in H_2(M)$ such that $\Sigma_M=k_MA_M$. This is always true, for example, if $H_2(M)\cong H^2(M)$ is torsion free. The class $A_M$ is primitive in the sense that the image of $PD(A_M)$ on $H_2(M)$ is all of $\mathbb{Z}$.

In the following calculations we will often use the Mayer-Vietoris sequence for the decomposition $M=M'\cup \nu\Sigma_M$:
\begin{equation*}
\ldots\rightarrow H_k(\partial M')\rightarrow H_k(M')\oplus H_k(\Sigma)\rightarrow H_k(M)\rightarrow H_{k-1}(\partial M')\rightarrow\ldots 
\end{equation*}
with homomorphisms
\begin{align*}
 H_k(\partial M')\rightarrow H_k(M')\oplus H_k(\Sigma),&\quad \alpha\mapsto (\mu_*\alpha,p_*\alpha) \\
H_k(M')\oplus H_k(\Sigma)\rightarrow H_k(M),&\quad (x,y)\mapsto \rho_* x-i_* y.
\end{align*} 
We also use the Mayer-Vietoris sequence for cohomology groups.

\begin{lem}\label{eq poinc dual H2M'dM'} The following diagram commutes up to sign for every integer $p$:
\begin{equation*}
\begin{CD}
H_p(M',\partial M') @>\partial >> H_{p-1}(\partial M')\\
@V\cong VV @V\cong VV \\
H^{4-p}(M') @>\mu^*>> H^{4-p}(\partial M')
\end{CD}
\end{equation*}
where the vertical isomorphisms are Poincar\'e duality.
\end{lem}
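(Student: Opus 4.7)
The plan is to identify both vertical arrows as cap products with fundamental classes and then invoke the boundary formula for the cap product. Concretely, Poincar\'e--Lefschetz duality for the compact oriented $4$-manifold $M'$ with boundary realizes the left vertical isomorphism as
\[ PD\colon H^{4-p}(M') \xrightarrow{\ \cong\ } H_p(M',\partial M'), \qquad \beta \longmapsto \beta \cap [M',\partial M'], \]
while ordinary Poincar\'e duality for the closed oriented $3$-manifold $\partial M'$ realizes the right vertical isomorphism as
\[ PD\colon H^{4-p}(\partial M') \xrightarrow{\ \cong\ } H_{p-1}(\partial M'), \qquad \gamma \longmapsto \gamma \cap [\partial M']. \]
The orientation conventions fixed in Section \ref{Sect Gompf sum general} guarantee the key compatibility $\partial[M',\partial M'] = [\partial M']$ under the connecting homomorphism $\partial\colon H_4(M',\partial M') \to H_3(\partial M')$.

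Given $\alpha \in H_p(M',\partial M')$, I would write $\alpha = \beta \cap [M',\partial M']$ for the unique $\beta = PD^{-1}(\alpha) \in H^{4-p}(M')$ and then compute
\[ \partial\alpha \;=\; \partial\bigl(\beta \cap [M',\partial M']\bigr) \;=\; \pm\, \mu^*\beta \cap \partial[M',\partial M'] \;=\; \pm\, \mu^*\beta \cap [\partial M'] \;=\; \pm\, PD(\mu^*\beta), \]
which is exactly the commutativity up to sign asserted by the lemma. The middle equality is the standard boundary formula for the cap product, which is a direct companion of the naturality identity \eqref{cup cap} already quoted in the paper (see \cite[Chapter VI]{Br}); the rightmost equality is the definition of the right-hand vertical arrow, and the leftmost equality is the definition of the left-hand one.

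The only delicate point is sign bookkeeping: the boundary cap product formula carries a factor of $(-1)^{4-p}$ (or a similar sign, depending on the convention for $\partial$ and for the cap product used), and the induced orientation on $\partial M'$ is fixed only up to the outward-normal-first convention adopted earlier. This is precisely why the statement only asserts commutativity up to sign, and with that slack granted the verification reduces to citing the boundary cap product identity together with the orientation convention $\partial[M',\partial M'] = [\partial M']$. No further computation is required.
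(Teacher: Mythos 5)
Your proposal is correct and takes essentially the same route as the paper: the paper offers no independent argument but simply cites \cite[Chapter VI, Theorem 9.2]{Br}, and that theorem's content is precisely this commuting duality ladder, proved exactly as you do via the boundary formula $\partial(\beta\cap[M',\partial M'])=\pm\,\mu^*\beta\cap\partial[M',\partial M']$ together with $\partial[M',\partial M']=[\partial M']$. Since the lemma only claims commutativity up to sign, your handling of the sign and orientation conventions is adequate, and nothing further is needed.
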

A proof for this lemma can be found in \cite[Chapter VI, Theorem 9.2]{Br}. We also need the following lemma:

\begin{lem}\label{lem poinc dual HMM' embedding} There is a commuting diagram for every integer $m$:
\begin{equation*}
\begin{CD}
H^m(M,M') @> >> H^m(M)\\
@V\cong VV @V\cong VV \\
H_{4-m}(\Sigma_M) @>i_*>> H_{4-m}(M)
\end{CD}
\end{equation*}
where the upper horizontal homomorphism comes from the long exact sequence in cohomology associated to the pair $(M,M')$ and the vertical maps are isomorphisms.
\end{lem}
A proof for this lemma follows from a version of Poincar\'e-Lefschetz duality as in \cite[Chapter VI, Corollary 8.4]{Br}. The isomorphism on the right is Poincar\'e duality and the isomorphism on the left can be thought of as
\begin{equation*}
H^m(M,M')\cong H^m(\nu \Sigma_M,\partial\nu\Sigma_M)\cong H_{4-m}(\nu\Sigma_M)\cong H_{4-m}(\Sigma_M)
\end{equation*}
induced by excision, Poincar\'e duality and the deformation retraction $\nu\Sigma_M\rightarrow \Sigma_M$.  

\subsection{Calculation of $H^2(M')$}
We begin with the calculation of the second cohomology of the complement $M'$.
\begin{prop}\label{lem H^2 M, M'} There exists a short exact sequence
\begin{equation*}
0\longrightarrow H^2(M)/\mathbb{Z}\Sigma_M\stackrel{\rho^*}{\longrightarrow}H^2(M')\longrightarrow \text{ker}(i\colon H_1(\Sigma_M)\rightarrow H_1(M))\longrightarrow 0.
\end{equation*}
This sequence splits, because $H_1(\Sigma_M)$ is torsion free. Hence there exists an isomorphism
\begin{equation}\label{splitting of H2M' as direct sum}
H^2(M')\cong \left(H^2(M)/\mathbb{Z}\Sigma_M\right)\oplus \text{ker}\,i.
\end{equation}
\end{prop}

\begin{proof} We consider the following part of the long exact sequence in cohomology associated to the pair $(M,M')$:
\begin{equation*}
\ldots\rightarrow H^2(M,M')\rightarrow H^2(M)\stackrel{\rho^*}{\rightarrow}H^2(M')\stackrel{\partial}{\rightarrow}H^3(M,M')\rightarrow H^3(M)\rightarrow \ldots
\end{equation*}
The claim follows by applying Lemma \ref{lem poinc dual HMM' embedding} for $m=2,3$.
\end{proof}

\begin{rem}\label{rem explicit splitting of H2M' as direct sum} An explicit splitting can be defined as follows: The images of loops representing a basis of $\text{ker}\,i$ under the embedding $i\colon \Sigma\rightarrow M$ bound surfaces in $M$. We can lift these curves to the boundary of $M'$ such that they bound in $M'$, see Lemma \ref{parallel curve adapted}. In this way the elements in $\text{ker}\,i$ determine classes in $H^2(M')\cong H_2(M',\partial M')$. 
\end{rem}

\begin{defn} Choose a class $B_M\in H^2(M)$ with $B_M\cdot A_M=1$. Such a class exists because $A_M$ is indivisible. We denote the image of this class in $H^2(M')\cong H_2(M',\partial M')$ by $B_M'$.
\end{defn}

The surface representing $B_M$ can be chosen such that it intersects the surface $\Sigma_M$ in precisely $k_M$ transverse positive points by starting with any surface that has intersection $k_M$ with $\Sigma_M$ and then possibly increasing the genus. Hence there also exists a surface $B_M'$ in $M'$ bounding $k_M$ parallel copies of the meridian $\sigma^M$.

Consider the subgroup in $H^2(M)$ generated by the classes $B_M$ and $A_M$. (This subgroup corresponds to the {\em Gompf nucleus} in elliptic surfaces, defined as a regular neighbourhood of a cusp fibre and a section \cite{Gnuc, GS}.)
\begin{defn} Let $P(M)=(\mathbb{Z}B_M\oplus \mathbb{Z}A_M)^\perp$ denote the orthogonal complement in $H^2(M)$ with respect to the intersection form. We call the elements in $P(M)$ {\em perpendicular classes}.
\end{defn}
Since $A_M^2=0$ and $A_M\cdot B_M=1$, the intersection form on these (indivisible) elements looks like
\begin{equation*}
\left(\begin{array}{cc} B_M^2 &1 \\ 1& 0 \\ \end{array}\right).
\end{equation*}
Since this form is unimodular it follows that there exists a direct sum decomposition
\begin{equation}\label{decomp of H^2 in A,B,P}
H^2(M)=\mathbb{Z}B_M\oplus \mathbb{Z}A_M\oplus P(M).
\end{equation}
The restriction of the intersection form to $P(M)$ modulo torsion is again unimodular (see \cite[Lemma 1.2.12]{GS}) and $P(M)$ has rank equal to $b_2(M)-2$. The following lemma determines the decomposition of an arbitrary element in $H^2(M)$ under the direct sum \eqref{decomp of H^2 in A,B,P}:

\begin{lem}\label{decomp alpha P(M)} For every element $\alpha\in H^2(M)$ define a class $\overline{\alpha}$ by the equation 
\begin{equation}\label{orth proj P(M)}
\alpha=(\alpha\cdot A_M)B_M+(\alpha\cdot B_M-B_M^2(\alpha \cdot A_M))A_M +\overline{\alpha}.
\end{equation}
Then $\overline{\alpha}$ is the component of $\alpha$ in the subgroup $P(M)$. 
\end{lem}
\begin{proof}
Writing $\alpha=aA_M+bB_M+\overline{\alpha}$, the intersections of $\overline{\alpha}$ with $A_M$ and $B_M$ have to vanish. This determines the coefficients $a$ and $b$. 
\end{proof}
\begin{defn} We define $P(M)_{A_M}=H^2(M)/\left(\mathbb{Z}\Sigma_M\oplus\mathbb{Z}B_M\right)=\mathbb{Z}_{k_M}A_M\oplus P(M)$.
\end{defn}
\begin{prop} There exists an isomorphism
\begin{equation*}
H^2(M')\cong P(M)_{A_M}\oplus \mathbb{Z}B_M\oplus \text{ker}\,i.
\end{equation*}
\end{prop} 
This proposition shows that the cohomology group $H^2(M')$ decomposes into elements in the interior of $M'$ coming from $M$ (i.e. elements in $P(M)_{A_M}$), classes bounding multiples of the meridian to $\Sigma_M$ (multiples of $B_M$) and classes bounding curves on the boundary $\partial M'$ which are lifts of curves on $\Sigma_M$ (elements of $\text{ker}\,i$).

\subsection{Calculation of $H_1(M')$ and $H^1(M')$}\label{subsec calculation H1M' H^1M'}

In this subsection we calculate the first homology and cohomology of the complement $M'$.
\begin{prop}\label{calculation cohomology H^1M'} The map $\rho$ induces an isomorphism $H^1(M')\cong H^1(M)$.
\end{prop}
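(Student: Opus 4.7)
My plan is to extract the isomorphism from the long exact sequence of the pair $(M,M')$ in cohomology, using excision and the Thom isomorphism to compute the relative terms.

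First I would observe that, since $(\nu\Sigma_M,\partial\nu\Sigma_M)$ is an oriented rank-two disk bundle pair over $\Sigma_M$, excision combined with the Thom isomorphism gives
\begin{equation*}
H^k(M,M')\;\cong\;H^k(\nu\Sigma_M,\partial\nu\Sigma_M)\;\cong\;H^{k-2}(\Sigma),
\end{equation*}
so that $H^0(M,M')=H^1(M,M')=0$ and $H^2(M,M')\cong\mathbb{Z}$, generated by the Thom class.

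Plugging this into the long exact sequence of the pair $(M,M')$ reduces the situation to
\begin{equation*}
0\longrightarrow H^1(M)\stackrel{\rho_M^*}{\longrightarrow}H^1(M')\longrightarrow H^2(M,M')\longrightarrow H^2(M),
\end{equation*}
so the claim becomes the injectivity of the last map. Under the standard identification of the Thom class of a tubular neighbourhood with a cocycle representative of the Poincar\'e dual of the zero section, the image of the generator of $H^2(M,M')$ in $H^2(M)$ is $PD(\Sigma_M)$. Since $\Sigma_M$ is non-torsion by the standing hypothesis and Poincar\'e duality $H^2(M)\cong H_2(M)$ is an isomorphism, $PD(\Sigma_M)$ is non-torsion, hence non-zero, so the map $\mathbb{Z}\cong H^2(M,M')\to H^2(M)$ is injective. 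This gives $\rho_M^*$ as an isomorphism.

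The only step that is not a purely formal diagram chase would be the identification of the image of the Thom class with $PD(\Sigma_M)$, which deserves a sentence of justification; the rest of the proof is standard. I note that the argument nowhere uses triviality of the normal bundle $\nu\Sigma_M$, which is consistent with the remark preceding the proposition stating that the self-intersection zero hypothesis is not needed here.
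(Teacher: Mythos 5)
Your proof is correct, but it is the Poincar\'e-dual formulation of the paper's argument rather than the same computation. The paper stays in homology: from the exact sequence $0\rightarrow H_3(M)\rightarrow H_3(M,\Sigma_M)\rightarrow H_2(\Sigma_M)\stackrel{i}{\rightarrow}H_2(M)$ and injectivity of $i$ (the fundamental class maps to the non-torsion class $\Sigma_M$) it gets $H_3(M,\Sigma_M)\cong H_3(M)$, and then converts by excision and Poincar\'e--Lefschetz duality, $H^1(M')\cong H_3(M',\partial M')\cong H_3(M,\Sigma_M)\cong H_3(M)\cong H^1(M)$. You instead run the cohomology sequence of the pair $(M,M')$ and compute the relative groups by excision plus the Thom isomorphism; the non-torsion hypothesis enters as injectivity of $\mathbb{Z}\cong H^2(M,M')\rightarrow H^2(M)$, $1\mapsto PD(\Sigma_M)$, which is precisely the dual of the paper's injectivity of $i$. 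What your route buys: it is the same mechanism the paper itself deploys for Proposition \ref{lem H^2 M, M'} (there $H^m(M,M')\cong H_{4-m}(\Sigma_M)$ plays the role of your Thom isomorphism), so the degree-one and degree-two computations become uniform; and, as you observe, it makes transparent that only orientability of the normal bundle is used, in agreement with the paper's remark that the results of Section \ref{subsec calculation H1M' H^1M'} hold without $\Sigma_M^2=0$. What the paper's route buys: it never needs the identification of the image of the Thom class with $PD(\Sigma_M)$ --- exactly the step you rightly flag as the one requiring a sentence of justification --- since in the homological picture the corresponding fact is merely that $H_2(\Sigma_M)\rightarrow H_2(M)$ sends the fundamental class to $[\Sigma_M]$, which is immediate; the price is invoking Lefschetz duality for the manifold with boundary $M'$ instead.
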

\begin{proof}
We consider the following part of the long exact sequence in cohomology associated to the pair $(M,M')$:
\begin{equation*}
0\rightarrow H^1(M,M')\rightarrow H^1(M)\stackrel{\rho^*}{\rightarrow}H^1(M')\stackrel{\partial}{\rightarrow}H^2(M,M')\rightarrow H^2(M)\rightarrow \ldots
\end{equation*}
By Lemma \ref{lem poinc dual HMM' embedding} it follows that $H^1(M,M')=0$ and the map $H^2(M,M')\rightarrow H^2(M)$ is under Poincar\'e duality equivalent to the map
\begin{align*}
i\colon H_2(\Sigma_M)\cong\,&\mathbb{Z}\rightarrow H_2(M)\\
&a\mapsto a\Sigma_M.
\end{align*}
This map is injective, since the class $\Sigma_M$ is non-torsion. Hence by exactness the map $\rho^*\colon H^1(M)\rightarrow H^1(M')$ is an isomorphism. 
\end{proof}
We can now calculate the first homology of the complement.
\begin{prop}\label{H_1 complement} There exists an isomorphism $H_1(M')\cong H_1(M)\oplus \mathbb{Z}_{k_M}$. 
\end{prop}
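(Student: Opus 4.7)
The plan is to extract the claim from the long exact sequence of the pair $(M,M')$ and then to show abstractly that the resulting short exact sequence of abelian groups splits.

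First I combine excision $H_k(M,M')\cong H_k(\nu\Sigma_M,\partial\nu\Sigma_M)$ with the Thom isomorphism $H_k(\nu\Sigma_M,\partial\nu\Sigma_M)\cong H_{k-2}(\Sigma)$, valid since $\nu\Sigma_M$ is a trivial $D^2$-bundle. This gives $H_1(M,M')=0$ and $H_2(M,M')\cong\mathbb{Z}$, so the relevant piece of the LES reads
\begin{equation*}
H_2(M)\xrightarrow{\,\cap\,\Sigma_M\,}\mathbb{Z}\xrightarrow{\partial}H_1(M')\to H_1(M)\to 0.
\end{equation*}
The first map, being cap product with the Thom class, is the intersection pairing with $\Sigma_M$, which has image $k_M\mathbb{Z}$ by the very definition of $k_M$; the connecting map $\partial$ sends $1$ to the meridian class $\sigma^M$. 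Exactness then yields
\begin{equation*}
0\to\mathbb{Z}_{k_M}\to H_1(M')\to H_1(M)\to 0,
\end{equation*}
where the $\mathbb{Z}_{k_M}$ factor is generated by $\sigma^M$, of order exactly $k_M$.

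It remains to split this sequence. By Proposition \ref{calculation cohomology H^1M'} and universal coefficients, $H_1(M')$ and $H_1(M)$ have the same free rank, so the issue is on torsion. For that I invoke the cohomology exact sequence
\begin{equation*}
\mathbb{Z}=H^2(M,M')\to H^2(M)\to H^2(M')\to H^3(M,M')\cong\mathbb{Z}^{2g},
\end{equation*}
whose last term is torsion-free. Consequently all torsion in $H^2(M')$ lies in the image of $H^2(M)\to H^2(M')$, which equals $H^2(M)/\mathbb{Z}\,PD(\Sigma_M)$. Here the assumption $\Sigma_M=k_M A_M$ is crucial: it gives $PD(\Sigma_M)=k_M\,PD(A_M)$ with $PD(A_M)$ having primitive free part, and a direct change-of-basis calculation inside the decomposition $H^2(M)=T\oplus F$ then shows
\begin{equation*}
T\bigl(H^2(M)/\mathbb{Z}\,PD(\Sigma_M)\bigr)\cong T(H^2(M))\oplus\mathbb{Z}_{k_M}.
\end{equation*}
Another application of UCT identifies $T(H_1(M'))$ with $T(H^2(M'))$ (and similarly for $M$), so $T(H_1(M'))\cong T(H_1(M))\oplus\mathbb{Z}_{k_M}$. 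Combined with equal free ranks, this gives the desired isomorphism $H_1(M')\cong H_1(M)\oplus\mathbb{Z}_{k_M}$.

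The main obstacle is the splitting step: a priori the SES represents a class in $\mathrm{Ext}^1(H_1(M),\mathbb{Z}_{k_M})$, which can be non-zero when $H_1(M)$ has torsion. The hypothesis $\Sigma_M=k_M A_M$ (stronger than $k_M$ merely being the divisibility of the intersection pairing with $\Sigma_M$) is precisely what is needed to ensure that $PD(\Sigma_M)$ is divisible by $k_M$ on the torsion part of $H^2(M)$ as well, so that the torsion computation above really produces a direct sum rather than a more complicated cyclic extension.
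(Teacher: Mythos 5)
Your proof is correct and takes essentially the same route as the paper: both reduce the statement via the Universal Coefficient Theorem to the torsion computation $\mbox{Tor}\bigl(H^2(M)/\mathbb{Z}k_M PD(A_M)\bigr)\cong \mbox{Tor}\,H^2(M)\oplus\mathbb{Z}_{k_M}$, hinging on the indivisibility of $A_M$, combined with $H^1(M')\cong H^1(M)$ to match free ranks. Your inline arguments --- that torsion in $H^2(M')$ lies in the image of $H^2(M)$ because $H^3(M,M')\cong\mathbb{Z}^{2g}$ is torsion free, and the homology short exact sequence $0\to\mathbb{Z}_{k_M}\to H_1(M')\to H_1(M)\to 0$ --- simply re-derive pieces the paper obtains from its Proposition on $H^2(M')$ and states separately as a Corollary.
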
 
\begin{proof}By Proposition \ref{lem H^2 M, M'} we have 
\begin{align*}
\text{Tor}H^2(M')&\cong \text{Tor}(H^2(M)/\mathbb{Z}PD(\Sigma_M))\\
& \cong \text{Tor}(H^2(M)/\mathbb{Z}k_MPD(A_M))\\
&\cong \text{Tor}H^2(M)\oplus \mathbb{Z}PD(A_M)/k_M\mathbb{Z}PD(A_M)\\
&\cong \text{Tor}H^2(M)\oplus \mathbb{Z}_{k_M}PD(A_M).
\end{align*}
The third step follows because the class $A_M$ is primitive and of infinite order. The Universal Coefficient Theorem implies that 
\begin{equation*}
\text{Tor}H^2(M)=\text{Ext}(H_1(M),\mathbb{Z})\cong \text{Tor}H_1(M),
\end{equation*}
and similarly for $M'$. This implies
\begin{equation*}
\text{Tor}H_1(M')\cong \text{Tor}H_1(M)\oplus \mathbb{Z}_{k_M}.
\end{equation*}
Using again the Universal Coefficient Theorem we get
\begin{align*}
H_1(M')&\cong H^1(M')\oplus \text{Tor}H_1(M')\\
&\cong H^1(M)\oplus \text{Tor}H_1(M)\oplus \mathbb{Z}_{k_M}\\
&\cong H_1(M)\oplus \mathbb{Z}_{k_M}.
\end{align*}
\end{proof}
A similar calculation has been done in \cite{HsSz} and \cite{Ro} for the case of a 4-manifold $M$ under the assumption $H_1(M)=0$. The fundamental group of the complement $M'$ can be calculated as follows:
\begin{prop}\label{fund M and M'} The fundamental groups of $M$ and $M'$ are related by
\begin{equation*} \pi_1(M)\cong \pi_1(M')/N(\sigma^M),
\end{equation*}
where $N(\sigma^M)$ denotes the normal subgroup in $\pi_1(M')$ generated by the meridian $\sigma^M$ to the surface $\Sigma_M$.
\end{prop}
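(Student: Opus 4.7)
The plan is to apply the Seifert--van Kampen theorem to an open cover of $M$ consisting of a slight thickening of $M'$ and the open interior of a slightly enlarged tubular neighbourhood of $\Sigma_M$. These two pieces deformation retract onto $M'$ and $\Sigma_M$ respectively, and their intersection deformation retracts onto $\partial\nu\Sigma_M\cong \Sigma\times S^1$. Van Kampen then yields the pushout description
\begin{equation*}
\pi_1(M)\cong \pi_1(M')\ast_{\pi_1(\partial\nu\Sigma_M)}\pi_1(\Sigma_M),
\end{equation*}
with respect to the two inclusion-induced homomorphisms.

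Next I would identify the two structure maps explicitly. Because $\nu\Sigma_M$ is a $D^2$-bundle over $\Sigma_M$, the inclusion $\partial\nu\Sigma_M\hookrightarrow \nu\Sigma_M$ induces on fundamental groups the projection $\pi_1(\Sigma)\times\mathbb{Z}\to \pi_1(\Sigma)$ which kills the meridian $\sigma^M$ (it bounds a disk in the fibre) and sends each $\gamma_i^M$ to the corresponding generator $\gamma_i$ of $\pi_1(\Sigma_M)$. The other map $\pi_1(\partial\nu\Sigma_M)\to \pi_1(M')$ is just the inclusion map $\mu_{M*}$, taking each $\gamma_i^M$ and $\sigma^M$ to its image in $\pi_1(M')$.

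Finally, I would analyze the pushout. Each generator $\gamma_i$ of $\pi_1(\Sigma_M)$ gets amalgamated to the corresponding element $\gamma_i^M\in \pi_1(M')$, so the $\pi_1(\Sigma_M)$ factor contributes no new generators. Its unique defining relation $\prod_{i=1}^g[\gamma_{2i-1},\gamma_{2i}]=1$ becomes $\prod[\gamma_{2i-1}^M,\gamma_{2i}^M]=1$ after identification, but this equation is already forced in $\pi_1(M')$, since it is the image under $\mu_{M*}$ of the same relation holding in $\pi_1(\partial\nu\Sigma_M)$. Hence $\pi_1(\Sigma_M)$ is completely absorbed, and the sole new consequence of amalgamation is the identification $\sigma^M=1$ arising from $\sigma\in \pi_1(\partial\nu\Sigma_M)$, whose image on the $\pi_1(\Sigma_M)$-side is trivial. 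This gives the desired isomorphism $\pi_1(M)\cong \pi_1(M')/N(\sigma^M)$.

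The key subtlety, and the only genuinely non-mechanical step, is verifying that the surface relation is automatically fulfilled inside $\pi_1(M')$ via the boundary inclusion, so that the amalgamated product imposes no relations beyond $\sigma^M=1$; everything else is a routine application of van Kampen to a codimension-two submanifold complement together with its trivialized tubular neighbourhood.
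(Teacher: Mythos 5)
Your proof is correct and takes exactly the approach the paper indicates: the paper omits the details, saying only that the result is an application of the Seifert--van Kampen theorem (referring to the author's thesis appendix), and your decomposition $M=M'\cup\nu\Sigma_M$ with intersection deformation retracting onto $\partial\nu\Sigma_M\cong\Sigma\times S^1$ is the standard way to carry this out. Your key verification---that the surface relation already holds in $\pi_1(\Sigma\times S^1)\cong\pi_1(\Sigma)\times\mathbb{Z}$ and hence in $\pi_1(M')$ via $\mu_{M*}$, so that amalgamating along the surjection $\pi_1(\Sigma\times S^1)\to\pi_1(\Sigma)$ imposes nothing beyond killing the image $\sigma^M$ of the kernel generator---is precisely the point that makes the pushout collapse to the asserted quotient $\pi_1(M')/N(\sigma^M)$.
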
 
The proof, which we omitt, is an application of the Seifert-van Kampen theorem (for a proof see for example \cite[Appendix]{MHthesis}). Taking the abelianization of the exact sequence
\begin{equation*}
1\rightarrow N(\sigma^M)\rightarrow \pi_1(M')\rightarrow \pi_1(M)\rightarrow 1
\end{equation*}
we get with Proposition \ref{H_1 complement}:
\begin{cor}\label{1 hom M M'} The first integral homology groups of $M'$ and $M$ are related by the exact sequence 
\begin{equation}\label{eq H1 M M'}
0\rightarrow \mathbb{Z}_{k_M}\stackrel{j}{\rightarrow}H_1(M')\stackrel{\rho}{\rightarrow}H_1(M)\rightarrow 0
\end{equation}
which splits. The image of $j$ is generated by the meridian $\sigma^M$ to the surface $\Sigma_M$.  
\end{cor}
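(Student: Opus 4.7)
The approach is to abelianize the short exact sequence of fundamental groups from Proposition \ref{fund M and M'} and then combine the result with Proposition \ref{H_1 complement}.

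First, I would start with the exact sequence
\begin{equation*}
0 \to N(\sigma^M) \to \pi_1(M') \to \pi_1(M) \to 0
\end{equation*}
supplied by Proposition \ref{fund M and M'} and apply the abelianization functor. Using the general fact $(G/N)^{\mathrm{ab}} \cong G^{\mathrm{ab}}/\pi(N)$ for $\pi\colon G \to G^{\mathrm{ab}}$ the natural projection, and noting that in an abelian group all conjugates of $\sigma^M$ collapse to $\sigma^M$ itself, the image of $N(\sigma^M)$ in $H_1(M')$ is exactly the cyclic subgroup $\langle \sigma^M\rangle$. This yields the exact sequence
\begin{equation*}
0 \to \langle \sigma^M\rangle \to H_1(M') \xrightarrow{\rho} H_1(M) \to 0,
\end{equation*}
showing in particular that the image of $j$ is generated by $\sigma^M$.

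Next, to pin down the order of $\sigma^M$, I would compare with Proposition \ref{H_1 complement}, which states $H_1(M') \cong H_1(M) \oplus \mathbb{Z}_{k_M}$ as abstract abelian groups. A rank count in the exact sequence forces $\langle \sigma^M\rangle$ to be torsion (otherwise the rank of the quotient would drop below that of $H_1(M)$), and then matching torsion orders gives $|\langle \sigma^M\rangle| \cdot |\mathrm{Tor}\,H_1(M)| = |\mathrm{Tor}\,H_1(M)| \cdot k_M$, so $|\langle \sigma^M\rangle| = k_M$.

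For the splitting, I would invoke the following classical fact about finitely generated abelian groups: a subgroup $A \subset B$ is a direct summand of $B$ if and only if $B \cong A \oplus B/A$ as abstract groups (equivalently, $A$ is pure in $B$, which follows by computing the $p$-ranks $r_n(G) = \dim_{\mathbb{F}_p}(p^n G/p^{n+1}G)$ and noting that equality of types forces $A \cap p^n B = p^n A$). Taking $B = H_1(M')$, $A = \langle \sigma^M\rangle \cong \mathbb{Z}_{k_M}$, and $B/A \cong H_1(M)$, the required hypothesis is provided precisely by Proposition \ref{H_1 complement}, so the sequence splits. The main obstacle I anticipate is this final algebraic step: the isomorphism furnished by Proposition \ref{H_1 complement} is not a priori a section of $\rho$, and it is the purity criterion from the structure theorem that produces the actual complement of $\langle \sigma^M\rangle$ in $H_1(M')$.
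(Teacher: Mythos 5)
Your proposal is correct and takes essentially the same route as the paper, which obtains the corollary precisely by abelianizing the sequence $0\rightarrow N(\sigma^M)\rightarrow\pi_1(M')\rightarrow\pi_1(M)\rightarrow 0$ of Proposition \ref{fund M and M'} and combining with the abstract isomorphism $H_1(M')\cong H_1(M)\oplus\mathbb{Z}_{k_M}$ of Proposition \ref{H_1 complement}, with details deferred to the thesis appendix. The details you supply---the normal closure abelianizing to the cyclic subgroup $\langle\sigma^M\rangle$, the rank and torsion-order count forcing this subgroup to have order exactly $k_M$, and the purity argument showing that an abstract isomorphism $B\cong A\oplus B/A$ of finitely generated abelian groups makes $A$ a direct summand---are exactly the steps the paper omits, and they are sound.
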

{\em A priori} we only know that the kernel of $\rho$ is a finite cyclic subgroup isomorphic to $\mathbb{Z}_n$ generated by $\sigma^M$. We then use the following lemma:
\begin{lem}
Let $A$ be a finitely generated abelian group and $A'=A\oplus \mathbb{Z}_m$. If $A'$ contains a subgroup $H$ isomorphic to $\mathbb{Z}_n$ and $A'/H\cong A$, then $n=m$.
\end{lem}
The proof follows because $H$ is contained in the torsion subgroup of $A'$. We then count the elements of this torsion subgroup modulo $H$. 

Before we continue with the calculation of $H_2(M')$, we derive explicit expressions for the maps
\begin{equation*}
\mu^*\colon H^k(M')\rightarrow H^k(\partial M'),\quad k=1,2
\end{equation*}
and choose certain framings such that $\mu_*\colon H_1(\partial M')\rightarrow H_1(M')$ has a normal form.

\subsection{Adapted framings}\label{subsect adapted framings}

In this subsection, we define a particular class of framings $\tau_M$ which are adapted to the splitting of $H_1(M')$ into $H_1(M)$ and the torsion group determined by the meridian of $\Sigma_M$ as in Proposition \ref{H_1 complement}. This is a slightly "technical" issue which will make the calculations much easier. If the homology class representing $\Sigma_M$ is indivisible ($k_M=1$), then there is no restriction and every framing is adapted. 

By Corollary \ref{1 hom M M'} there exists an isomorphism of the form
\begin{align*}
s\colon H_1(M')&\longrightarrow H_1(M)\oplus\mathbb{Z}_{k_M}\\
\alpha&\mapsto (\rho_*\alpha,\mathcal{A}(\alpha)).
\end{align*}
The framing $\tau_M$ should be compatible with this isomorphism in the following way: The composition
\begin{equation}\label{comp muM smathcalA}
H_1(\partial M')\stackrel{\mu}{\rightarrow} H_1(M')\stackrel{s}{\rightarrow} H_1(M)\oplus \mathbb{Z}_{k_M}
\end{equation}
should be given on generators by
\begin{align*}
{\gamma_i^M}&\mapsto(i_*\gamma_i,0)\\
\sigma^M&\mapsto (0,1).
\end{align*}
Consider the exact sequence
\begin{equation}\label{eq Mayer Vietoris for adapted framings}
H_1(\partial M')\rightarrow H_1(M')\oplus H_1(\Sigma)\rightarrow H_1(M),
\end{equation}
coming from the Mayer-Vietoris sequence for $M$. It maps
\begin{align*}
\gamma_i^M&\mapsto (\mu_*\gamma_i^M,\gamma_i)\mapsto \rho_*\mu_*\gamma_i^M-i_*\gamma_i\\
\sigma^M&\mapsto (\mu_*\sigma^M,0)\,\,\mapsto \rho_*\mu_*\sigma^M.
\end{align*}
By exactness of the Mayer-Vietoris sequence, we have $\rho_*\mu_*\gamma_i^M=i_*\gamma_i$ and $\rho_*\mu_*\sigma^M=0$, where as before $\gamma_i^M$ is determined by $\gamma_i$ via the trivialization $\tau_M$. The isomorphism $s$ above maps
\begin{align*}
\mu_*\gamma_i^M&\mapsto (\rho_*\mu_*\gamma_i^M,\mathcal{A}(\mu_*\gamma_i^M))=(i_*\gamma_i,\mathcal{A}(\mu_*\gamma_i^M))\\
\mu_*\sigma^M&\mapsto (0,1).
\end{align*} 
Let $[c^M_i]$ denote the numbers $\mathcal{A}(\mu_*\gamma_i^M)\in \mathbb{Z}_{k_M}$. It follows that the composition in equation \eqref{comp muM smathcalA} is given on generators by
\begin{align*}
\gamma_i^M&\mapsto (i_*\gamma_i, [c_i^M])\\
\sigma^M&\mapsto (0,1).
\end{align*}
We can change the reference framing $\tau_M$ to a new framing $\tau_M'$ such that $\gamma_i^M$ changes to
\begin{equation*}
{\gamma_i^M}'=\gamma_i^M-c_i^M\sigma^M,
\end{equation*}
for all $i=1,\dotsc,2g$ and $\sigma^M$ stays the same. This change can be realized by a suitable self-diffeomorphism of $\partial\nu\Sigma_M$ according to the proof of Lemma \ref{act phi hom 1}. The composition in equation \eqref{comp muM smathcalA} now has the form  
\begin{align*}
{\gamma_i^M}'&\mapsto(i_*\gamma_i,0)\\
\sigma^M&\mapsto (0,1).
\end{align*}
\begin{lem} Suppose that $k_M>1$. There exists a trivialization $\tau_M$ of the normal bundle of $\Sigma_M$ in $M$, such that the composition 
\begin{equation*}
H_1(\partial M')\stackrel{\mu}{\rightarrow} H_1(M')\stackrel{s}{\rightarrow} H_1(M)\oplus \mathbb{Z}_{k_M}
\end{equation*}
is given by
\begin{align*}
\gamma_i^M&\mapsto(i_*\gamma_i,0),\quad i=1,\dotsc,2g \\
\sigma^M&\mapsto (0,1).
\end{align*}
A framing with this property is called {\em adapted}. If $k_M=1$ every framing is adapted.
\end{lem}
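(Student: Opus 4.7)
The plan is to take an arbitrary reference trivialization $\tau_M$ and modify it by a fibrewise self-diffeomorphism of $\Sigma\times S^1$ that kills the obstruction integers $[c_i^M]\in\mathbb{Z}_{k_M}$. The computation preceding the lemma already shows that, for any initial framing, the composition $s\circ\mu_*$ has the form $\gamma_i^M\mapsto (i_*\gamma_i,[c_i^M])$ and $\sigma^M\mapsto(0,1)$, so the only obstruction to being adapted lies in the torsion coefficients $[c_i^M]$.

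First I would pick integer lifts $c_i^M\in\mathbb{Z}$ of the classes $[c_i^M]\in\mathbb{Z}_{k_M}$ and construct a smooth map $C\colon\Sigma\to S^1$ with prescribed degrees $\deg(C\circ\gamma_i)=-c_i^M$ for $i=1,\dots,2g$. This is possible because $H^1(\Sigma;\mathbb{Z})\cong[\Sigma,S^1]$ and the values $\langle[C],\gamma_i\rangle$ can be assigned arbitrarily on the basis $\gamma_1,\dots,\gamma_{2g}$, exactly as noted in the definition of the integers $a_i$ earlier in the text. The map $C$ then determines an orientation preserving bundle self-diffeomorphism
\begin{equation*}
F\colon\Sigma\times S^1\longrightarrow\Sigma\times S^1,\quad (x,\alpha)\mapsto(x,C(x)\cdot\alpha),
\end{equation*}
and the same differentiation argument as in the proof of Lemma \ref{act phi hom 1} (but without the complex conjugation coming from $r$) yields $F_*\gamma_i=\gamma_i-c_i^M\sigma$ and $F_*\sigma=\sigma$.

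Next I would define the new framing $\tau_M'=\tau_M\circ F$. With respect to $\tau_M'$, the basis elements on $\partial\nu\Sigma_M$ become
\begin{equation*}
{\gamma_i^M}'=(\tau_M')_*\gamma_i=\tau_{M*}(\gamma_i-c_i^M\sigma)=\gamma_i^M-c_i^M\sigma^M,\qquad {\sigma^M}'=\sigma^M.
\end{equation*}
Since the map $s\circ\mu_*$ is $\mathbb{Z}$-linear and sends $\gamma_i^M\mapsto(i_*\gamma_i,[c_i^M])$ and $\sigma^M\mapsto(0,1)$, we compute
\begin{equation*}
s\circ\mu_*({\gamma_i^M}')=(i_*\gamma_i,[c_i^M])-c_i^M(0,1)=(i_*\gamma_i,0),
\end{equation*}
while $s\circ\mu_*({\sigma^M}')=(0,1)$. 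Hence $\tau_M'$ is adapted.

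The only mildly subtle point is the realization of the desired change of basis by an actual trivialization rather than just a homological manipulation; this is what makes it crucial that $F$ is a genuine fibre-preserving diffeomorphism of $\Sigma\times S^1$, so that $\tau_M\circ F$ is again a normal bundle trivialization. Everything else is a direct translation of the preparatory computation into the language of framings, and no further input is needed.
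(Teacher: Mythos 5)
Your proof is correct and takes essentially the same route as the paper: the text preceding the lemma likewise changes the reference framing by a fibrewise rotation determined by a map $C\colon\Sigma\to S^1$ whose degrees along the $\gamma_i$ are chosen to cancel the lifts $c_i^M$, so that ${\gamma_i^M}'=\gamma_i^M-c_i^M\sigma^M$ while $\sigma^M$ is unchanged, making the composition adapted. Your explicit construction of $F$, the identity $\tau_M'=\tau_M\circ F$, and the observation that this is again a genuine trivialization merely spell out details the paper delegates to the proof of its Lemma \ref{act phi hom 1}.
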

Every framing is adapted if $k_M=1$, since in this case $H_1(M')$ and $H_1(M)$ are isomorphic and $\sigma^M$ is null-homologous. We also have the following:
\begin{lem}\label{parallel curve adapted} Let $\tau_M$ be an adapted framing. Then a curve $\alpha$ on the surface $\Sigma_M$ is null-homologous in $M$ if and only if its parallel copy $\alpha^M$ on the push-off $\Sigma^M$ is null-homologous in $M'$.
\end{lem} 
We now derive the explicit expressions for the action of the map $\mu\colon \partial M'\rightarrow M'$ on cohomology and discuss rim tori in the manifold $M'$. These calculations will be useful later on.

\subsection{Calculation of the map $\mu^*\colon H^1(M')\rightarrow  H^1(\partial M')$.}

By Proposition \ref{calculation cohomology H^1M'}, the map $\rho^*\colon H^1(M)\rightarrow H^1(M')$ is an isomorphism. The framing $\tau_M$ defines an identification
\begin{equation*}
H^1(\partial M')\cong H^1(\Sigma)\oplus\mathbb{Z}PD(\Sigma^M),
\end{equation*}
where $\Sigma^M$ denotes the push-off of the surface $\Sigma_M$ and $PD(\Sigma^M)={\sigma^M}^*$. We want to derive a formula for the composition
\begin{equation*}
H^1(M)\cong H^1(M')\stackrel{\mu^*}{\longrightarrow} H^1(\partial M')\cong H^1(\Sigma)\oplus\mathbb{Z}PD(\Sigma^M).
\end{equation*}
Let $\alpha\in H^1(M)$. Then by the exactness of the Mayer-Vietoris sequence \eqref{eq Mayer Vietoris for adapted framings} 
\begin{align*}
\langle \mu^*\rho^*\alpha,\gamma_i^M\rangle &= \langle \alpha,\rho_*\mu_*\gamma_i^M\rangle \\
&=\langle \alpha, i_*\gamma_i\rangle \\
&=\langle i^*\alpha, \gamma_i\rangle,
\end{align*} 
and 
\begin{align*}
\langle \mu^*\rho^*\alpha,\sigma_i^M\rangle &= \langle \alpha,\rho_*\mu_*\sigma^M\rangle \\
&=0.
\end{align*} 
Hence we have:
\begin{lem}\label{Lemma muMrhoM H^1X}
The composition 
\begin{equation*}
\mu^*\circ\rho^*\colon H^1(M)\rightarrow  H^1(\Sigma)\oplus\mathbb{Z}PD(\Sigma^M)
\end{equation*}
is equal to $i^*\oplus 0$.
\end{lem}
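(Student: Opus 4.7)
The plan is to evaluate $\mu^*\rho^*\alpha$ on the distinguished homology basis $\gamma_1^M,\dotsc,\gamma_{2g}^M,\sigma^M$ of $H_1(\partial M')$ and read off the result in the dual basis of $H^1(\partial M')\cong H^1(\Sigma)\oplus\mathbb{Z}PD(\Sigma^M)$. Since the identification of the boundary with $\Sigma\times S^1$ via $\tau_M$ makes this a Künneth splitting, a cohomology class is determined by its pairings with these generators, so testing on homology suffices.

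First I would use naturality of the Kronecker pairing, namely $\langle f^*\mu,v\rangle=\langle\mu,f_*v\rangle$, to rewrite
\begin{equation*}
\langle \mu^*\rho^*\alpha,\gamma_i^M\rangle=\langle\alpha,\rho_*\mu_*\gamma_i^M\rangle,\qquad \langle\mu^*\rho^*\alpha,\sigma^M\rangle=\langle\alpha,\rho_*\mu_*\sigma^M\rangle.
\end{equation*}
Next I would plug in the two identities $\rho_*\mu_*\gamma_i^M=i_*\gamma_i$ and $\rho_*\mu_*\sigma^M=0$. These are immediate consequences of the exactness of the Mayer-Vietoris sequence
\begin{equation*}
H_1(\partial M')\longrightarrow H_1(M')\oplus H_1(\Sigma)\longrightarrow H_1(M)
\end{equation*}
applied to the generators $\gamma_i^M\mapsto(\mu_*\gamma_i^M,\gamma_i)$ and $\sigma^M\mapsto(\mu_*\sigma^M,0)$, exactly as already observed in the discussion preceding the adapted framing lemma.

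Combining these gives $\langle\mu^*\rho^*\alpha,\gamma_i^M\rangle=\langle\alpha,i_*\gamma_i\rangle=\langle i^*\alpha,\gamma_i\rangle$ and $\langle\mu^*\rho^*\alpha,\sigma^M\rangle=0$. Since $\{\gamma_i\}\cup\{\sigma^M\}$ forms a basis of $H_1(\partial M')$ whose dual classes are precisely the pulled-back basis of $H^1(\Sigma)$ together with $PD(\Sigma^M)$, this shows that $\mu^*\rho^*\alpha$ has component $i^*\alpha$ in $H^1(\Sigma)$ and vanishing component on $\mathbb{Z}PD(\Sigma^M)$, i.e.\ $\mu^*\rho^*=i^*\oplus 0$ as claimed.

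There is no real obstacle here: the statement is essentially a bookkeeping consequence of the Mayer-Vietoris identifications set up in the adapted-framing paragraph, so the only care needed is to keep the dualities straight and to invoke the correct naturality formula for the Kronecker pairing.
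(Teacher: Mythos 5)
Your proposal is correct and follows essentially the same route as the paper's own proof: evaluating $\mu^*\rho^*\alpha$ against the basis $\gamma_i^M,\sigma^M$ via the Kronecker pairing and invoking the Mayer--Vietoris identities $\rho_*\mu_*\gamma_i^M=i_*\gamma_i$ and $\rho_*\mu_*\sigma^M=0$. Your added remark that pairing with homology suffices (since $H^1(\partial M')$ is torsion free) makes explicit a point the paper leaves implicit, but the argument is otherwise the same.
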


\subsection{Calculation of the map $\mu^*\colon H^2(M')\rightarrow H^2(\partial M')$.}

Using the framing $\tau_M$ of the surface $\Sigma_M$ we can identify
\begin{equation*}
H^2(\partial M')\cong H_1(\Sigma\times S^1)\cong \mathbb{Z}\oplus H_1(\Sigma),
\end{equation*}
where the $\mathbb{Z}$ summand is spanned by $PD(\sigma^M)$. We can then consider the composition
\begin{equation*}
\left(H^2(M)/\mathbb{Z}\Sigma_M\right)\oplus \text{ker}\,i\cong H^2(M')\stackrel{\mu^*}{\rightarrow}H^2(\partial M') \cong \mathbb{Z}\oplus H_1(\Sigma).
\end{equation*}

\begin{prop}\label{adapted splitting for H2M prop} The composition 
\begin{equation}\label{mu splitting for H^2/Sigma}
\mu^*\colon \left(H^2(M)/\mathbb{Z}\Sigma_M\right)\oplus \text{ker}\,i \rightarrow \mathbb{Z}\oplus H_1(\Sigma)
\end{equation}
is given by 
\begin{equation*}
([V], \alpha)\mapsto (V\cdot\Sigma_M, \alpha).
\end{equation*}
\end{prop}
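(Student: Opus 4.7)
The plan is to interpret $\mu^*$ geometrically through the Poincaré--Lefschetz duality of Lemma \ref{eq poinc dual H2M'dM'}, which identifies $\mu^*: H^2(M') \to H^2(\partial M')$ (up to sign) with the connecting homomorphism $\partial: H_2(M', \partial M') \to H_1(\partial M')$. Under the framing $\tau_M$, the right-hand side is $H_1(\Sigma \times S^1) = \mathbb{Z}\sigma^M \oplus H_1(\Sigma)$, and the identification $H^2(\partial M') \cong \mathbb{Z}PD(\sigma^M) \oplus H_1(\Sigma)$ stated in the proposition is just Poincaré duality on $\partial M'$. So it suffices to compute the boundary class in $H_1(\partial M')$ of a geometric representative of each of the two summands of the splitting $H^2(M') \cong (H^2(M)/\mathbb{Z}\Sigma_M) \oplus \ker i$ from Proposition~\ref{lem H^2 M, M'}.

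For the first summand, I would fix $V \in H^2(M)$ and represent $PD_M(V)$ by an embedded surface $\Sigma_V \subset M$ transverse to $\Sigma_M$, with $|\Sigma_V \cap \Sigma_M| = V \cdot \Sigma_M$ counted with signs. The restriction $\Sigma_V \cap M'$ is a surface with boundary in $\partial M'$ that represents the image of $PD_M(V)$ under the composition $H_2(M) \to H_2(M, \Sigma_M) \cong H_2(M', \partial M')$ from~\eqref{f Hn-2}, i.e.\ the Poincaré--Lefschetz dual of $\rho^* V$. Its boundary consists of small meridian circles around each intersection point, giving
\[
\partial[\Sigma_V \cap M'] = (V \cdot \Sigma_M)\, \sigma^M \in H_1(\partial M').
\]
Applying Lemma \ref{eq poinc dual H2M'dM'} yields $\mu^* \rho^* V = (V \cdot \Sigma_M)\, PD(\sigma^M)$, with no $H_1(\Sigma)$ component, which is the first coordinate of the claim.

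For the second summand, I would use the explicit description in Remark \ref{rem explicit splitting of H2M' as direct sum}: a class $\alpha \in \ker(i: H_1(\Sigma) \to H_1(M))$ is represented by a loop on $\Sigma$ whose image in $M$ bounds, and via the framing $\tau_M$ we may assume this loop lies on the push-off $\Sigma^M \subset \partial M'$ and bounds a surface $S_\alpha$ in $M'$. Then $[S_\alpha] \in H_2(M', \partial M')$ is the Poincaré--Lefschetz dual of the class of $\alpha$ in $H^2(M')$, and $\partial[S_\alpha] = \alpha^M = (\tau_M)_* \alpha \in H_1(\partial M')$ sits entirely in the $H_1(\Sigma)$ summand with no $\sigma^M$ component (since $\alpha^M$ lives on a section of the circle bundle $\partial M' \to \Sigma$). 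Lemma \ref{eq poinc dual H2M'dM'} then gives that $\mu^*$ of the class is $PD(\alpha^M)$, corresponding to $(0, \alpha)$ under the identification $H^2(\partial M') \cong \mathbb{Z} \oplus H_1(\Sigma)$. Combining the two computations by linearity yields the stated formula.

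The main technical obstacle is being careful with signs and with the identification of boundary classes: on the one hand, one must verify that the boundary of $\Sigma_V \cap M'$ contributes only meridian summands (which follows from transversality and the product structure of $\nu\Sigma_M$), and on the other hand that $\partial[S_\alpha]$ has no $\sigma^M$ component (which follows from $\alpha^M \cdot \Sigma^M = 0$ in $\partial M'$). The sign in Lemma~\ref{eq poinc dual H2M'dM'} is fixed by the orientation conventions of Section \ref{Sect Gompf sum general} (outward normal last), and with these conventions both computed boundaries carry the positive sign matching the statement of the proposition.
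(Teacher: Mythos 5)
Your proof is correct, but for the main computation it takes a genuinely different route from the paper. For the second summand your argument coincides with the paper's: both invoke Lemma~\ref{eq poinc dual H2M'dM'} together with the geometric choice of splitting in Remark~\ref{rem explicit splitting of H2M' as direct sum}, under which $\partial[S_\alpha]=\alpha^M$ has no meridian component. For the first summand, however, the paper never passes to geometric representatives: it uses exactness of the Mayer--Vietoris sequence for $M=M'\cup\nu\Sigma_M$ to get the identity $\mu^*\rho^*V=p^*i^*V$ on the nose, then evaluates $\langle i^*V,\Sigma\rangle=V\cdot\Sigma_M$ and uses that $p^*$ of the generator of $H^2(\Sigma)$ is $PD(\sigma^M)$, the dual of the fibre. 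You instead represent $PD(V)$ by an embedded surface transverse to $\Sigma_M$, arrange it to meet $\nu\Sigma_M$ in fibre disks, and count boundary meridians, converting back via the duality lemma. Your version buys geometric transparency --- the $\mathbb{Z}$-coordinate literally counts boundary meridians, which matches the later constructions of $B_M'$ and the surfaces $D_i^M$ --- and treats both summands uniformly through one duality statement. The paper's version buys brevity and, more importantly, sign control: the Mayer--Vietoris relation $\mu^*\rho^*=p^*i^*$ is an exact equality, whereas Lemma~\ref{eq poinc dual H2M'dM'} only commutes up to a sign, so in your argument the positive sign in the first coordinate rests on the orientation bookkeeping you assert in the last paragraph but do not carry out; since that sign is uniform in the degree and your meridian orientations are fixed by positive intersection points, this is a verifiable convention check rather than a gap, but it is the one place where your route requires care that the paper's route avoids.
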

The map is well-defined in the first variable since $\Sigma_M^2=0$ and has image in $k_M\mathbb{Z}$, since $\Sigma_M$ is divisible by $k_M$. The map in the second variable is inclusion.
\begin{proof}
On the second summand, the map $\mu^*$ is the identity by Lemma \ref{eq poinc dual H2M'dM'} and the choice of splitting in Remark \ref{rem explicit splitting of H2M' as direct sum}. It remains to prove that
\begin{equation*}
\mu^*\rho^*[V]=(V\cdot\Sigma_M)PD(\sigma^M)
\end{equation*}
Exactness of the Mayer-Vietoris sequence for $M=M'\cup\nu \Sigma_M$ implies the equality $\mu^*\rho^*V=p^*i^*V$. Since
\begin{equation*}
\langle i^*V,\Sigma\rangle = \langle V,\Sigma_M\rangle=V\cdot\Sigma_M
\end{equation*}
the class $i^*V$ is equal to the class $(V\cdot\Sigma_M)1$, where $1$ denotes the generator of $H^2(\Sigma)$, Poincar\'e dual to a point. Since $p^*(1)$ is the Poincar\'e dual of a fibre in $\partial M'=\partial \nu\Sigma_M$, where $p$ denotes the projection $\partial \nu\Sigma_M\rightarrow \Sigma_M$, the claim follows.
\end{proof}

\begin{cor}\label{cor calc muM* P(M)_A_M} The composition
\begin{equation*}
\mu^*\colon P(M)_{A_M}\oplus \mathbb{Z}B_M\oplus \text{ker}\,i\longrightarrow \mathbb{Z}\oplus H_1(\Sigma)
\end{equation*}
is given by
\begin{equation*}
(c, xB_M, \alpha)\mapsto (xk_M, \alpha).
\end{equation*}
\end{cor}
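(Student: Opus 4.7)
The plan is to deduce this corollary directly from Proposition~\ref{adapted splitting for H2M prop} by feeding it the explicit description of $H^2(M)/\mathbb{Z}\Sigma_M$ coming from the orthogonal decomposition \eqref{decomp of H^2 in A,B,P} together with the relation $\Sigma_M = k_M A_M$. The splitting yields
\begin{equation*}
H^2(M)/\mathbb{Z}\Sigma_M \;\cong\; \mathbb{Z}_{k_M} A_M \oplus P(M) \oplus \mathbb{Z}B_M \;=\; P(M)_{A_M} \oplus \mathbb{Z}B_M,
\end{equation*}
so that under the identification of the corollary a triple $(c, xB_M, \alpha) \in P(M)_{A_M} \oplus \mathbb{Z}B_M \oplus \ker i$ corresponds to the pair $([\,c + xB_M\,], \alpha)$ in $(H^2(M)/\mathbb{Z}\Sigma_M) \oplus \ker i$. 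Proposition~\ref{adapted splitting for H2M prop} then sends this to $\bigl((c+xB_M)\cdot \Sigma_M,\,\alpha\bigr)$, so the entire task reduces to the computation of this intersection number.

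I would write $c = \bar{a}\,A_M + c'$ with $\bar{a} \in \mathbb{Z}_{k_M}$ and $c' \in P(M)$, and compute term by term. First, $A_M \cdot \Sigma_M = k_M\, A_M^2 = 0$, because $\Sigma_M^2 = 0$ forces $A_M^2 = 0$. Second, $c' \cdot \Sigma_M = k_M(c' \cdot A_M) = 0$ by the very definition $P(M) = (\mathbb{Z}B_M \oplus \mathbb{Z}A_M)^\perp$. Finally, $B_M \cdot \Sigma_M = k_M(B_M \cdot A_M) = k_M$ by the choice of $B_M$. Adding up, $(c + xB_M)\cdot \Sigma_M = xk_M$, and the $\ker i$ factor is untouched, giving the stated formula.

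The only place where anything subtle could intrude is the pairing with $\Sigma_M$ restricted to the torsion summand $\mathbb{Z}_{k_M}A_M$ of $P(M)_{A_M}$: a priori it is not obvious that this pairing descends from $H^2(M)$ to the quotient $H^2(M)/\mathbb{Z}\Sigma_M$ on the torsion piece. This is however exactly what Proposition~\ref{adapted splitting for H2M prop} already guarantees (the map is noted there to be well-defined in the first variable because $\Sigma_M^2 = 0$), and it is also visible directly from $A_M \cdot \Sigma_M = 0$ above. I do not foresee any real obstacle — the corollary is essentially bookkeeping, with the content of the statement entirely carried by Proposition~\ref{adapted splitting for H2M prop} and the orthogonality properties packaged into the decomposition $H^2(M) = \mathbb{Z}B_M \oplus \mathbb{Z}A_M \oplus P(M)$.
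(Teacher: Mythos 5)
Your proposal is correct and matches the paper's intended derivation: the paper states this corollary without proof, as immediate bookkeeping from Proposition \ref{adapted splitting for H2M prop} combined with the decomposition $H^2(M)=\mathbb{Z}B_M\oplus\mathbb{Z}A_M\oplus P(M)$ and $\Sigma_M=k_MA_M$, which is precisely the computation you carry out. Your remark on the torsion summand $\mathbb{Z}_{k_M}A_M$ is also handled correctly, since well-definedness of the pairing on the quotient follows from $\Sigma_M^2=0$ exactly as noted after the proposition.
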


\subsection{Calculation of $H_2(M')$.}

Using the homomorphism $\mu_M\colon H_2(\partial M')\rightarrow H_2(M')$ and the projection $p_M\colon \partial M'\rightarrow \Sigma$ we define a map $r_M$ given by 
\begin{equation*}
r_M=\mu_M\circ PD\circ p_M^*\colon H^1(\Sigma)\rightarrow H_2(M'). 
\end{equation*}  
The image of this homomorphism has the following interpretation: The map $PD\circ p_M^*$ determines an isomorphism of $H^1(\Sigma)$ onto $\text{ker}\,p_M$.  In our standard basis, this isomorphism is given by
\begin{equation}\label{rim torus for elements H^1}
\begin{split}
H^1(\Sigma)&\rightarrow \text{ker}\,p_M\\ 
\sum c_i\gamma_i^*&\mapsto \sum c_i\Gamma_i^M.
\end{split}
\end{equation}
\begin{lem} Every element in the image of $r_M$ can be represented by a smoothly embedded torus in the interior of $M'$.
\end{lem}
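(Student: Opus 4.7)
My plan is to construct an embedded torus representative explicitly, handling primitive and non-primitive classes separately. First, I would unpack $r_M(\alpha)$: for $\alpha=\sum_{i=1}^{2g}c_i\gamma_i^*\in H^1(\Sigma)$, the identification \eqref{rim torus for elements H^1} together with the interpretation $\Gamma_i=[\pi_i\times\sigma]$ from the discussion following the definition of the basis of $H_2(\Sigma\times S^1)$ gives
\begin{equation*}
r_M(\alpha)=\mu_{M*}\bigl[\beta^M\times\sigma^M\bigr]\in H_2(M'),
\end{equation*}
where $\beta=\sum_i c_i\pi_i\in H_1(\Sigma)$ and $\beta^M$ denotes the image in $\partial M'$ of a loop representing $\beta$ under the framing $\tau_M$. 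So it suffices to realize $\mu_{M*}[\beta^M\times\sigma^M]$ by a single smoothly embedded torus pushed into the interior of $M'$.

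I would then handle the case that $\beta$ is zero or primitive. If $\beta=0$ the class is trivial and any small null-homologous embedded torus in $\mbox{int}\,M'$ works. If $\beta\neq 0$ is primitive, I would use the standard fact that a primitive class in $H_1$ of a closed oriented surface is represented by an embedded simple closed curve $\beta_0\subset\Sigma$; then $\beta_0\times S^1\subset\partial M'$ is an embedded torus in the required class, and a small push along the collar of $\partial M'$ places it in the interior.

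The main obstacle is the non-primitive case $\beta=k\beta_0$ with $k\geq 2$ and $\beta_0$ primitive: inside the 3-manifold $\partial M'$ this class is only realized by $k$ disjoint parallel tori, and connect-summing them in $M'$ would raise the genus. To combine them into a single embedded torus I would instead exploit the extra dimension available in $M'$. Let $T$ be the pushed-off embedded torus realizing $\mu_{M*}[\beta_0^M\times\sigma^M]$; since $T$ lies in a level of the collar of $\partial M'$, its self-intersection in $M'$ vanishes and its normal bundle is trivial, so a tubular neighborhood is $\nu T\cong T^2\times D^2$. Inside $\nu T$, consider
\begin{equation*}
\phi\colon T^2\longrightarrow T^2\times D^2,\qquad (x,y)\mapsto\bigl((kx\bmod 1,\,y),\,\epsilon(\cos 2\pi x,\sin 2\pi x)\bigr)
\end{equation*}
for small $\epsilon>0$. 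The $D^2$-component separates the $k$ preimages of any point under the covering $(x,y)\mapsto(kx,y)$, so $\phi$ is injective and hence an embedding; and the composition with the projection $T^2\times D^2\to T^2$ is a degree-$k$ map, so $[\phi(T^2)]=k[T]=r_M(\alpha)$. This "winding" construction is the only substantive step; the rest reduces to standard facts about curves on surfaces and collars of boundaries.
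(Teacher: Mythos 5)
Your proof is correct, but it takes a genuinely different route from the paper's. The paper makes no primitivity analysis at all: it represents the class directly by $c^M\times\sigma^M$, where $c^M$ is a closed oriented curve on $\Sigma_M$ with transverse self-intersections, and then uses the collar $\Sigma_M\times S^1\times I$ of $\partial M'$ to eliminate the crossings of $c^M$ inside the 3-manifold $\Sigma_M\times I$ (pushing the strands at each double point to different levels of $I$) without changing the homology class; crossing the resulting embedded curve with $\sigma^M$ gives the embedded torus in one uniform step. You instead factor out the divisibility, writing $\beta=k\beta_0$ with $\beta_0$ primitive and represented by a simple closed curve, and then realize $k[T]$ by a single embedded torus via the degree-$k$ self-covering $(x,y)\mapsto(kx,y)$ of $T^2$ perturbed into the $D^2$-fibres of $\nu T\cong T^2\times D^2$ --- essentially a cabling of $T$. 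Both arguments exploit the same phenomenon, namely the extra normal direction available in the 4-manifold (the $I$-factor of the collar for the paper, the $D^2$-factor of the tubular neighbourhood for you), and your key steps check out: your unpacking $r_M(\alpha)=\mu_{M*}[\beta^M\times\sigma^M]$ with $\beta=\sum_i c_i\pi_i$ agrees with the paper's identification of $\Gamma_i$ with $\pi_i\times\sigma$; the injectivity of $\phi$ is exactly as you say, since the $D^2$-coordinate distinguishes the $k$ points in each fibre of the covering; and the triviality of $\nu T$ is correctly reduced to $e(\nu T)=0$, which holds because $T$ displaces off itself in the collar direction. What each approach buys: the paper's resolution-of-crossings argument is shorter and handles all classes at once, while your construction isolates a stronger, reusable fact --- any integer multiple of the class of an embedded torus with trivial normal bundle in a 4-manifold is again represented by an embedded torus --- at the cost of a case split and the (standard) input that primitive classes in $H_1(\Sigma)$ are represented by embedded simple closed curves.
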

\begin{proof} Note that the classes $\Gamma_i^M\subset H_2(\partial M')$ are of the form $\chi_i^M\times \sigma^M$ where $\chi_i^M$ is a curve on $\Sigma_M$. Hence every element $T\in \text{ker}\,p_M$ is represented by a surface of the form $c^M\times \sigma^M$, where $c^M$ is a closed, oriented curve on $\Sigma_M$ with transverse self-intersections. A collar of $\partial M'=\partial\nu\Sigma_M$ in $M'$ is of the form $\Sigma_M\times S^1\times I$. We can eliminate the self-intersection points of the curve $c^M$ in $\Sigma_M\times I$, without changing the homology class. If we then take $c^M$ times $\sigma^M$, we see that $\mu_M(T)=c^M\times \sigma^M$ can be represented by a smoothly embedded torus in $M'$. 
\end{proof}

We make the following definition \cite{EP, FS3, IP}.
\begin{defn}\label{def rim tori 1} 
The map $r_M$ given by
\begin{equation}\label{defn eq rim tori map rM}
r_M=\mu_M\circ PD\circ p_M^*\colon H^1(\Sigma)\rightarrow H_2(M')
\end{equation} 
is called the {\em rim tori homomorphism} and the image of $r_M$, denoted by $R(M')$, the {\em group of rim tori} in $M'$. 
\end{defn}
Rim tori are already ``virtually'' in the manifold $M$ as embedded null-homologous tori. Some of them can become non-zero homology classes if the tubular neighbourhood $\nu\Sigma_M$ is deleted. The set of elements in $H^1(\Sigma)$ whose associated rim tori are null-homologous in $M'$ is given by the kernel of the rim tori map $r_M$, hence
\begin{equation}\label{rim tori related to ker rM}
R(M')\cong H^1(\Sigma)/\text{ker}\,r_M.
\end{equation}
We now derive a short exact sequence for the calculation of $H_2(M')$: Consider the following sequence coming from the long exact sequence for the pair $(M',\partial M')$:
\begin{equation}\label{long exact M', partial M'}
H_3(M',\partial M')\stackrel{\partial}{\rightarrow} H_2(\partial M')\stackrel{\mu}{\rightarrow} H_2(M')\rightarrow H_2(M',\partial M')\stackrel{\partial}{\rightarrow}H_1(\partial M').
\end{equation}
This induces a short exact sequence
\begin{equation}\label{eq short ex seq calc H2M' ker d}
0\longrightarrow H_2(\partial M')/\text{ker}\,\mu\stackrel{\mu}{\longrightarrow} H_2(M')\stackrel{}{\longrightarrow}\text{ker}\,\partial\longrightarrow 0.
\end{equation}

\begin{lem}\label{lem rim ker mu} The terms on the left and right side of the short exact sequence \eqref{eq short ex seq calc H2M' ker d} can be calculated as follows:
\begin{enumerate}
\item $\text{ker}\,\partial$ is isomorphic to 
\begin{equation*}
P(M)_{A_M}=H^2(M)/(\mathbb{Z}\Sigma_M\oplus\mathbb{Z}B_M).
\end{equation*}
\item The framing induces an isomorphism
\begin{equation*}
H_2(\partial M')/\text{ker}\,\mu\cong R(M')\oplus \mathbb{Z}PD(\Sigma^M).
\end{equation*}
In addition, the kernel of $r_M$ is equal to the image of $i_M^*\colon H^1(M)\rightarrow H^1(\Sigma)$. Hence the map $r_M$ induces an isomorphism $\text{coker}\,i_M^*\cong R(M')$.
\end{enumerate}
\end{lem}
\begin{proof}
Under Poincar\'e duality, the boundary homomorphism $\partial$ on the right hand side of the exact sequence \eqref{long exact M', partial M'} can be replaced by 
\begin{equation*}
\mu^*\colon H^2(M')\longrightarrow H^2(\partial M'),
\end{equation*}
as in Lemma \ref{eq poinc dual H2M'dM'}. Hence the kernel of $\partial$ can be replaced by the kernel of $\mu^*$ and claim (a) follows by Corollary \ref{cor calc muM* P(M)_A_M}.  

To prove part (b), consider the following part of the exact sequence \eqref{long exact M', partial M'}:
\begin{equation}
H_3(M',\partial M')\stackrel{\partial}{\longrightarrow} H_2(\partial M')\stackrel{\mu}{\longrightarrow} H_2(M').
\end{equation}
Under Poincar\'e duality and the isomorphism $\rho^*\colon H^1(M)\rightarrow H^1(M')$ this sequence becomes
\begin{equation}
H^1(M)\stackrel{\mu^*\circ\rho^*}{\longrightarrow} H^1(\partial M')\stackrel{\mu\circ PD}{\longrightarrow} H_2(M').
\end{equation}
By Lemma \ref{Lemma muMrhoM H^1X} this sequence can be written as
\begin{equation}\label{seq calc H2dM'/ker muM}
H^1(M)\stackrel{i^*\oplus 0}{\longrightarrow} H^1(\Sigma)\oplus\mathbb{Z}PD(\Sigma^M)\stackrel{r_M+\mu}{\longrightarrow} H_2(M'),
\end{equation}
where $r_M$ denotes the homomorphism above and $\mu$ is the restriction to $\mathbb{Z}PD(\Sigma^M)$. Exactness of sequence \eqref{seq calc H2dM'/ker muM} implies that the kernel of the homomorphism $r_M$ is equal to the image of $i^*$.
\end{proof}
Together with equation \eqref{rim tori related to ker rM} we get:
\begin{prop} The short exact sequence \eqref{eq short ex seq calc H2M' ker d} for the calculation of $H_2(M')$ can be written as
\begin{equation*}
0\longrightarrow R(M')\oplus \mathbb{Z}PD(\Sigma^M)\longrightarrow H_2(M')\longrightarrow P(M)_{A_M}\longrightarrow 0.
\end{equation*}
\end{prop}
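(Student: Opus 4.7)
The plan is to substitute into the short exact sequence \eqref{eq short ex seq calc H2M' ker d} the identifications of the two end terms that have just been established. There is essentially no new content — everything has been done in the preceding lemmas, propositions and corollaries, and what remains is to assemble them.

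First I would identify the right-hand term $\ker\partial \subset H_2(M',\partial M')$. By Lemma \ref{eq poinc dual H2M'dM'}, Poincar\'e duality converts $\partial\colon H_2(M',\partial M')\to H_1(\partial M')$ into $\mu^*\colon H^2(M')\to H^2(\partial M')$, so $\ker\partial\cong \ker\mu^*$. Now using the splitting $H^2(M')\cong P(M)_{A_M}\oplus \mathbb{Z}B_M\oplus\ker i$ together with the explicit formula of Corollary \ref{cor calc muM* P(M)_A_M}, a triple $(c,xB_M,\alpha)$ lies in $\ker\mu^*$ iff $xk_M=0$ in $\mathbb{Z}$ and $\alpha=0$; since $k_M>0$ this forces $x=0$ and $\alpha=0$, giving $\ker\mu^*=P(M)_{A_M}$.

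Second I would identify the left-hand term. Proposition \ref{lem rim ker mu} already gives
\begin{equation*}
H_2(\partial M')/\ker\mu\;\cong\;H^1(\Sigma)/\ker r_M\;\oplus\;\mathbb{Z}PD(\Sigma^M),
\end{equation*}
and by the first isomorphism theorem $H^1(\Sigma)/\ker r_M$ is isomorphic to the image of $r_M$, which is by Definition \ref{def rim tori 1} precisely the rim tori group $R(M')\subset H_2(M')$. Under the map $\mu\colon H_2(\partial M')\to H_2(M')$ appearing in the sequence, the summand $H^1(\Sigma)/\ker r_M$ is sent onto $R(M')$ (by definition of $r_M=\mu_M\circ PD\circ p_M^*$) and the summand $\mathbb{Z}PD(\Sigma^M)$ is sent to the class of the push-off surface $\Sigma^M\subset M'$. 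Injectivity of this identification on $H_2(\partial M')/\ker\mu$ is built into the sequence.

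Substituting these two identifications into \eqref{eq short ex seq calc H2M' ker d} yields the claimed sequence
\begin{equation*}
0\longrightarrow R(M')\oplus\mathbb{Z}PD(\Sigma^M)\longrightarrow H_2(M')\longrightarrow P(M)_{A_M}\longrightarrow 0.
\end{equation*}
There is no real obstacle here; the only point requiring a little care is bookkeeping the Poincar\'e duality identifications so that the kernel of $\partial$ on the $H_2$-side matches the kernel of $\mu^*$ on the $H^2$-side as computed in Corollary \ref{cor calc muM* P(M)_A_M}, and that the two summands of the left-hand group are mapped correctly into $H_2(M')$ by $\mu$.
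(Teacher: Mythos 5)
Your proposal is correct and follows essentially the same route as the paper: the paper also obtains the right-hand term by replacing $\partial$ with $\mu^*$ via Lemma \ref{eq poinc dual H2M'dM'} and reading off $\ker\mu^*\cong P(M)_{A_M}$ from Corollary \ref{cor calc muM* P(M)_A_M}, and obtains the left-hand term from Proposition \ref{lem rim ker mu} together with the identification $H^1(\Sigma)/\ker r_M\cong \operatorname{im} r_M=R(M')$. The paper in fact states the proposition without further proof, treating it exactly as you do --- as an assembly of the preceding identifications.
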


\section{Calculation of $H_1(X)$ and $H^1(X)$}\label{section calculation H_1X, H^1X}

Let $X=M \#_{\Sigma_M=\Sigma_N} N$ denote the generalized fibre sum of two closed 4-manifolds $M$ and $N$ along embedded surfaces $\Sigma_M$ and $\Sigma_N$ of genus $g$ and self-intersection zero. We assume as in Section \ref{section hom cohom of M'} that $\Sigma_M$ and $\Sigma_N$ represent non-torsion classes of maximal divisibility $k_M$ and $k_N$ and choose adapted framings for both surfaces as defined in Section \ref{subsect adapted framings}.

Consider the homomorphisms
\begin{align*}
i_M\oplus i_N\colon H_1(\Sigma;\mathbb{Z})&\longrightarrow H_1(M;\mathbb{Z})\oplus H_1(N;\mathbb{Z})\\
\lambda &\mapsto (i_M(\lambda),i_N(\lambda)),
\end{align*}
and
\begin{align*}
i_M^*+i_N^*\colon H^1(M;\mathbb{Z})\oplus H^1(N;\mathbb{Z})&\longrightarrow H^1(\Sigma;\mathbb{Z})\\
(\alpha,\beta)&\mapsto i_M^*\alpha+i_N^*\beta.
\end{align*}
The kernels of $i_M\oplus i_N$ and $i_M^*+i_N^*$ are free abelian groups, but the cokernels may have torsion. Both homomorphisms can also be considered for homology and cohomology with $\mathbb{R}$-coefficients. 

\begin{defn}\label{defn dimension d of ker iM*+iN*} Let $d$ denote the integer $d=\text{dim ker}\,(i_M\oplus i_N)$ for the linear map
\begin{equation*}
i_M\oplus i_N\colon H_1(\Sigma;\mathbb{R})\longrightarrow H_1(M;\mathbb{R})\oplus H_1(N;\mathbb{R})
\end{equation*}
of $\mathbb{R}$-vector spaces.
\end{defn}

\begin{lem}\label{d Ker Coker} Consider the homomorphisms $i_M\oplus i_N$ and $i_M^*+i_N^*$ for homology and cohomology with $\mathbb{R}$-coefficients. Then 
\begin{align*}
\text{dim ker}\,(i_M^*+i_N^*)&= b_1(M)+b_1(N)-2g +d = \text{dim coker}\, (i_M\oplus i_N)\\
\text{dim coker}\,(i_M^*+i_N^*)&= d = \text{dim ker}\,(i_M\oplus i_N),
\end{align*}
where $g$ denotes the genus of the surface $\Sigma$. 
\end{lem}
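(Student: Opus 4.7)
The plan is to reduce everything to elementary linear algebra over $\mathbb{R}$ by identifying the two maps as duals of each other. Since we are working with $\mathbb{R}$-coefficients, the Universal Coefficient Theorem gives natural isomorphisms $H^1(Y;\mathbb{R}) \cong \mathrm{Hom}_{\mathbb{R}}(H_1(Y;\mathbb{R}),\mathbb{R})$ for $Y \in \{M,N,\Sigma\}$. Under these identifications, the induced maps $i_M^*$ and $i_N^*$ on $H^1$ correspond to the transposes of $i_M$ and $i_N$ on $H_1$, and the map $i_M^* + i_N^*$ is the transpose of $i_M \oplus i_N$. Let me write $f = i_M \oplus i_N$ and $f^* = i_M^* + i_N^*$.

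Next I would apply the rank-nullity theorem to $f$. The domain has dimension $\dim H_1(\Sigma;\mathbb{R}) = 2g$, and by definition $\dim\ker f = d$, so $\dim\mathrm{im}\, f = 2g - d$. Since the codomain has dimension $b_1(M) + b_1(N)$, this gives
\begin{equation*}
\dim\mathrm{coker}\, f = b_1(M) + b_1(N) - (2g - d) = b_1(M) + b_1(N) - 2g + d,
\end{equation*}
which establishes the second equality in the first line of the lemma.

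For the statements about $f^*$, I would invoke the standard fact that a linear map and its transpose have the same rank, hence $\dim\mathrm{im}\, f^* = 2g - d$ as well. Applying rank-nullity to $f^*$, whose domain has dimension $b_1(M) + b_1(N)$ and codomain dimension $2g$, yields
\begin{equation*}
\dim\ker f^* = b_1(M) + b_1(N) - (2g - d) = b_1(M) + b_1(N) - 2g + d,
\end{equation*}
\begin{equation*}
\dim\mathrm{coker}\, f^* = 2g - (2g - d) = d,
\end{equation*}
which are the remaining equalities. There is no real obstacle here; the only thing requiring a moment's care is checking that the sign convention in the definition $f^* = i_M^* + i_N^*$ (rather than $i_M^* \oplus -i_N^*$ or similar) genuinely gives the transpose of $f = i_M \oplus i_N$ under the chosen pairing between $H_1$ and $H^1$, but this is immediate from the definitions of pullback and the direct-sum pairing. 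Everything then reduces to the two applications of rank-nullity above.
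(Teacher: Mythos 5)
Your proof is correct and follows essentially the same route as the paper: both identify $i_M^*+i_N^*$ as the dual of $i_M\oplus i_N$ under the identification of $H^1(\,\cdot\,;\mathbb{R})$ with the dual of $H_1(\,\cdot\,;\mathbb{R})$, and then deduce the dimensions via rank--nullity and equality of rank for a map and its transpose. Your verification that $(\alpha,\beta)\mapsto i_M^*\alpha+i_N^*\beta$ really is the transpose of $\lambda\mapsto(i_M\lambda,i_N\lambda)$ is a detail the paper states without comment, but it amounts to the same argument.
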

\begin{proof} 
By linear algebra, $i_M^*+i_N^*$ is the dual homomorphism to $i_M\oplus i_N$ under the identification of cohomology with the dual vector space of homology with $\mathbb{R}$-coefficients. Moreover,
\begin{align*}
\text{dim coker}\,(i_M\oplus i_N)&=b_1(M)+b_1(N)-\text{dim im}\,(i_M\oplus i_N)\\
&= b_1(M)+b_1(N)-(2g-\text{dim ker}\,(i_M\oplus i_N))\\
&= b_1(M)+b_1(N)-2g+d.
\end{align*}
This implies
\begin{align*}
\text{dim ker}\,(i_M^*+i_N^*)&= \text{dim coker}\,(i_M\oplus i_N)=b_1(M)+b_1(N)-2g +d\\
\text{dim coker}\,(i_M^*+i_N^*)&= \text{dim ker}\,(i_M\oplus i_N)=d.
\end{align*}
\end{proof}
In the following calculations we will often use the Mayer-Vietoris sequence associated to the decomposition $X=M'\cup N'$, given by
\begin{equation*}
\ldots\rightarrow H_k(\partial M')\stackrel{\psi_k}{\rightarrow} H_k(M')\oplus H_k(N')\rightarrow H_k(X)\rightarrow H_{k-1}(\partial M')\rightarrow\ldots 
\end{equation*}
with homomorphisms
\begin{align*}
\psi_k\colon H_k(\partial M')\rightarrow H_k(M')\oplus H_k(N'),&\quad \alpha\mapsto (\mu_M\alpha,\mu_N\phi_*\alpha) \\
H_k(M')\oplus H_k(N')\rightarrow H_k(X),&\quad (x,y)\mapsto \eta_M x - \eta_N y.
\end{align*} 
We also use the Mayer-Vietoris sequence for cohomology groups.

\subsection{Calculation of $H^1(X)$}
We begin with the calculation of the first cohomology of $X$. Consider the following part of the Mayer-Vietoris sequence in cohomology:
\begin{equation*}
0\rightarrow H^1(X)\stackrel{\eta_M^*\ominus\eta_N^*}{\longrightarrow}H^1(M')\oplus H^1(N')\stackrel{\psi_1^*}{\longrightarrow}H^1(\partial M').
\end{equation*}
Since $\eta_M^*-\eta_N^*$ is injective, $H^1(X)$ is isomorphic to the kernel of $\psi_1^*=\mu_M^*+\phi^*\mu_N^*$. Composing with isomorphisms, the map $\psi_1^*$ can be replaced by the map $\mu_M^*\rho_M^*+\phi^*\mu_N^*\rho_N^*$. Since the equality $\phi^*{\gamma_i^N}^*={\gamma_i^M}^*$ holds for all indices $i$, it follows with Lemma \ref{Lemma muMrhoM H^1X} that this composition can be replaced by the map
\begin{equation}\label{iM*+iN* replaced}
(i_M^*+i_N^*)\oplus 0\colon H^1(M)\oplus H^1(N)\rightarrow H^1(\Sigma)\oplus\mathbb{Z}PD(\Sigma^M).   
\end{equation}
This implies:
\begin{thm}\label{prop first cohom X} The first cohomology $H^1(X;\mathbb{Z})$ is isomorphic to the kernel of
\begin{equation*}
i_M^*+i_N^*\colon H^1(M;\mathbb{Z})\oplus H^1(N;\mathbb{Z})\rightarrow H^1(\Sigma;\mathbb{Z}).
\end{equation*}
\end{thm}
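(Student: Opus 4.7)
The plan is to obtain $H^1(X)$ directly as the kernel of the Mayer--Vietoris coboundary map for the decomposition $X = M' \cup N'$, and then recognize that coboundary, via the identifications already established in Sections \ref{section hom cohom of M'} and \ref{subsect action phi}, as the map $(i_M^* + i_N^*) \oplus 0$.

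First I would write down the relevant segment of the Mayer--Vietoris sequence in cohomology:
\begin{equation*}
H^0(M') \oplus H^0(N') \to H^0(\partial M') \to H^1(X) \xrightarrow{\eta_M^* \ominus \eta_N^*} H^1(M') \oplus H^1(N') \xrightarrow{\psi_1^*} H^1(\partial M'),
\end{equation*}
where $\psi_1^* = \mu_M^* + \phi^* \mu_N^*$. Since $M'$, $N'$, and $\partial M' \cong \Sigma \times S^1$ are all connected, the leftmost arrow is surjective, so $\eta_M^* \ominus \eta_N^*$ is injective and $H^1(X) \cong \ker \psi_1^*$.

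Next I would rewrite $\psi_1^*$ under the isomorphisms already at hand. Proposition \ref{calculation cohomology H^1M'} provides $\rho_M^*\colon H^1(M) \xrightarrow{\cong} H^1(M')$ (and similarly for $N$), while the framing $\tau_M$ gives an identification $H^1(\partial M') \cong H^1(\Sigma) \oplus \mathbb{Z}PD(\Sigma^M)$. Lemma \ref{Lemma muMrhoM H^1X} immediately shows $\mu_M^* \rho_M^* = i_M^* \oplus 0$. For the $N$-summand the extra ingredient is the action of $\phi^*$: dualizing Lemma \ref{act phi hom 1} (from $\phi_*\gamma_i^M = \gamma_i^N + a_i\sigma^N$ and $\phi_*\sigma^M = -\sigma^N$) one reads off $\phi^*(\gamma_j^N)^* = (\gamma_j^M)^*$ and $\phi^*(\sigma^N)^* = \sum_i a_i (\gamma_i^M)^* - (\sigma^M)^*$. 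Since Lemma \ref{Lemma muMrhoM H^1X} applied to $N$ places the image of $\mu_N^* \rho_N^*$ inside the $H^1(\Sigma)$-summand, the $(\sigma^N)^*$-component does not arise, and $\phi^* \mu_N^* \rho_N^* = i_N^* \oplus 0$.

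Adding the two contributions gives $\psi_1^* = (i_M^* + i_N^*) \oplus 0$ on $H^1(M) \oplus H^1(N)$, so its kernel coincides with the kernel of $i_M^* + i_N^*$, which is the claim. There is no serious obstacle: the only subtle point is keeping the two framings straight when dualizing $\phi_*$, and once the coefficients $a_i$ are placed on the $(\sigma^M)^*$-axis (which is \emph{killed} by Lemma \ref{Lemma muMrhoM H^1X} anyway), the formula collapses cleanly.
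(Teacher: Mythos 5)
Your proposal is correct and follows essentially the same route as the paper: identify $H^1(X)$ with $\ker\psi_1^*$ via the Mayer--Vietoris sequence for $X=M'\cup N'$ (injectivity of $\eta_M^*\ominus\eta_N^*$ coming from connectedness), then rewrite $\psi_1^*$ as $(i_M^*+i_N^*)\oplus 0$ using Proposition \ref{calculation cohomology H^1M'}, Lemma \ref{Lemma muMrhoM H^1X}, and the dual action of $\phi$, in particular $\phi^*({\gamma_i^N})^*=({\gamma_i^M})^*$. The only difference is cosmetic: you also record the formula $\phi^*({\sigma^N})^*=\sum_i a_i({\gamma_i^M})^*-({\sigma^M})^*$, which the paper omits since, as you note yourself, that component never arises.
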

As a corollary we can calculate the Betti numbers of $X$. 
\begin{cor}\label{betti X} The Betti numbers of a generalized fibre sum $X=M\#_{\Sigma_M=\Sigma_N}N$ along surfaces $\Sigma_M$ and $\Sigma_N$ of genus $g$ and self-intersection zero are given by
\begin{align*}
b_0(X)&=b_4(X)=1\\
b_1(X)&=b_3(X)=b_1(M)+b_1(N)-2g+d\\
b_2(X)&=b_2(M)+b_2(N)-2+2d\\
b_2^+(X)&=b_2^+(M)+b_2^+(N)-1+d\\
b_2^-(X)&=b_2^-(M)+b_2^-(N)-1+d,
\end{align*}
where $d$ is the integer from Definition \ref{defn dimension d of ker iM*+iN*}. 
\end{cor}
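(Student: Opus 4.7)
The plan is to read off $b_1$ from the previous two results and then obtain the remaining Betti numbers from the Euler characteristic and the signature.

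First, since $X$ is closed, connected and oriented we have $b_0(X) = b_4(X) = 1$, and by Poincaré duality $b_3(X) = b_1(X)$. Theorem \ref{prop first cohom X} identifies $H^1(X;\mathbb{R})$ with the kernel of
\begin{equation*}
i_M^* + i_N^* \colon H^1(M;\mathbb{R})\oplus H^1(N;\mathbb{R}) \longrightarrow H^1(\Sigma;\mathbb{R}),
\end{equation*}
and Lemma \ref{d Ker Coker} computes the dimension of this kernel to be $b_1(M)+b_1(N)-2g+d$. This gives $b_1(X)$ and hence also $b_3(X)$.

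Next I would compute $b_2(X)$ by a Euler-characteristic argument. Writing $X=M'\cup_{\phi}N'$ with $\partial M'\cong \Sigma\times S^1$, inclusion-exclusion and $\chi(\Sigma\times S^1)=0$ give $\chi(X)=\chi(M')+\chi(N')$. Since $M=M'\cup \nu\Sigma_M$ with $\nu\Sigma_M\simeq \Sigma$ and the same intersection condition, $\chi(M)=\chi(M')+\chi(\Sigma)$, and analogously for $N$. Combining these yields
\begin{equation*}
\chi(X)=\chi(M)+\chi(N)-2\chi(\Sigma)=\chi(M)+\chi(N)+4g-4.
\end{equation*}
Using $\chi=2-2b_1+b_2$ on each 4-manifold and substituting the formula for $b_1(X)$ just obtained, the terms involving $b_1(M)$ and $b_1(N)$ cancel and one finds $b_2(X)=b_2(M)+b_2(N)-2+2d$.

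Finally, for the signs I would invoke Novikov additivity of the signature: because $\Sigma_M$ and $\Sigma_N$ have self-intersection zero, the gluing $X=M'\cup_\phi N'$ satisfies $\sigma(X)=\sigma(M')+\sigma(N')=\sigma(M)+\sigma(N)$, since capping off a trivial $\Sigma\times S^1$ boundary by $\Sigma\times D^2$ does not change the signature. Together with $b_2^+(X)+b_2^-(X)=b_2(X)$ and $b_2^+(X)-b_2^-(X)=\sigma(X)$, solving the resulting $2\times 2$ linear system gives the two remaining formulae. The only nontrivial ingredient is the signature additivity; given the appearance of the fibre sum here with $\Sigma_M^2=\Sigma_N^2=0$ this is standard (see e.g.\ Wall's non-additivity formula, whose correction term vanishes in this situation), so I would either cite it or give a one-line justification via the Mayer--Vietoris description of $H^2(X;\mathbb{R})$ restricted to the orthogonal complement of the rim/vanishing classes.
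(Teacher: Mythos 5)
Your proposal is correct and follows essentially the same route as the paper: $b_1(X)$ from Theorem \ref{prop first cohom X} together with Lemma \ref{d Ker Coker}, then $b_2(X)$ from the inclusion-exclusion computation $e(X)=e(M)+e(N)+4g-4$, and finally $b_2^\pm(X)$ from Novikov additivity of the signature (which the paper simply cites from \cite[Remark 9.1.7]{GS}, exactly as you propose). Your closing remark on Wall non-additivity is a harmless elaboration of the same citation; nothing in your argument diverges from or adds a gap to the paper's proof.
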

\begin{proof} The formula for $b_1(X)$ follows from Theorem \ref{prop first cohom X} and Lemma \ref{d Ker Coker}. To derive the formula for $b_2(X)$, we use the formula for the Euler characteristic of a space decomposed into two parts $A,B$:
\begin{equation*}
e(A\cup B)=e(A)+e(B)-e(A\cap B),
\end{equation*}
For $M=M'\cup\nu\Sigma_M$, with $M'\cap \nu\Sigma_M\cong\Sigma\times S^1$, we get
\begin{align*}
e(M)&=e(M')+e(\nu\Sigma_M)-e(\Sigma\times S^1)\\
&=e(M')+2-2g,
\end{align*}
since $\nu\Sigma_M$ is homotopy equivalent to $\Sigma_M$ and $\Sigma\times S^1$ is a 3-manifold, hence has zero Euler characteristic. This implies 
\begin{equation*}
e(M')=e(M)+2g-2,\quad\text{and similarly}\quad e(N')=e(N)+2g-2.
\end{equation*}
For $X=M'\cup N'$, with $M'\cap N'\cong\Sigma\times S^1$, we then get
\begin{align*}
e(X)&=e(M')+e(N')\\
&=e(M)+e(N)+4g-4.
\end{align*}
Substituting the formula for $b_1(X)=b_3(X)$ above, this implies
\begin{align*}
b_2(X)&=-2+2(b_1(M)+b_1(N)-2g+d)+2-2b_1(M)+b_2(M)\\
&\quad +2-2b_1(N)+b_2(N)+4g-4\\
&= b_2(M)+b_2(N)-2+2d.
\end{align*}
It remains to prove the formula for $b_2^{\pm}(X)$. By Novikov additivity for the signature \cite[Remark 9.1.7]{GS},
\begin{equation*}
\sigma(X)=\sigma(M)+\sigma(N),
\end{equation*}
we get by adding $b_2(X)$ on both sides,
\begin{equation*}
2b_2^+(X)=2b_2^+(M)+2b_2^+(N)-2+2d,
\end{equation*}
hence $b_2^+(X)=b_2^+(M)+b_2^+(N)-1+d$. This also implies the formula for $b_2^-(X)$. 
\end{proof}
A direct computation of $b_2(X)$ as the rank of $H_2(X)$ will be given in Section \ref{subsec Calculation of H2(X)}.

\subsection{Calculation of $H_1(X)$}

In this subsection we prove a formula for the first integral homology of $X$. We make the following definition:
\begin{defn}
Let $n_{MN}$ denote the greatest common divisor of $k_M$ and $k_N$.
\end{defn}
If $n_{MN}$ is not equal to $1$, the formula for $H_1(X)$ involves an additional torsion term. Let $r$ denote the homomorphism defined by
\begin{align*}
r\colon H_1(\Sigma;\mathbb{Z})&\longrightarrow \mathbb{Z}_{n_{MN}},\\
\lambda&\mapsto \langle C,\lambda\rangle \mod n_{MN}.
\end{align*} 
We then have the following theorem:

\begin{thm}\label{H 1 for X} Consider the homomorphism
\begin{align*}
H_1(\Sigma;\mathbb{Z})&\stackrel{i_M\oplus i_N\oplus r}{\longrightarrow} H_1(M;\mathbb{Z})\oplus H_1(N;\mathbb{Z})\oplus \mathbb{Z}_{n_{MN}},\\
\lambda &\mapsto  (i_M \lambda,i_N \lambda, r(\lambda)).
\end{align*}
Then $H_1(X;\mathbb{Z})$ is isomorphic to the cokernel of $i_M\oplus i_N\oplus r$.
\end{thm}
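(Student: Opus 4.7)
The plan is to apply the Mayer--Vietoris sequence for $X = M' \cup N'$ in degree one. Because $M'$, $N'$ and $\partial M' \cong \Sigma \times S^1$ are all connected, the degree-zero Mayer--Vietoris map $\psi_0 \colon H_0(\partial M') \to H_0(M') \oplus H_0(N')$ sends $1 \mapsto (1,1)$ and is injective, so the sequence collapses to
\[
H_1(X) \;\cong\; \bigl(H_1(M') \oplus H_1(N')\bigr)\big/\operatorname{image}(\psi_1).
\]
The task thus reduces to computing $\psi_1(\alpha) = (\mu_M \alpha,\,\mu_N \phi_* \alpha)$ on the basis $\gamma_1^M, \ldots, \gamma_{2g}^M, \sigma^M$ of $H_1(\partial M')$ and simplifying the resulting quotient.

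For the explicit computation I would combine three ingredients: the splitting $H_1(M') \cong H_1(M) \oplus \mathbb{Z}_{k_M}$ from Corollary \ref{1 hom M M'} (and its $N$-analogue); the adapted-framing conventions of Section \ref{subsect adapted framings}, under which the induced map $H_1(\partial M') \to H_1(M) \oplus \mathbb{Z}_{k_M}$ sends $\gamma_i^M \mapsto (i_M \gamma_i, 0)$ and $\sigma^M \mapsto (0,1)$; and Lemma \ref{act phi hom 1}, which gives $\phi_* \gamma_i^M = \gamma_i^N + a_i \sigma^N$ and $\phi_* \sigma^M = -\sigma^N$ with $a_i = \langle C, \gamma_i\rangle$. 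Assembling these inside $H_1(M) \oplus \mathbb{Z}_{k_M} \oplus H_1(N) \oplus \mathbb{Z}_{k_N}$ produces
\begin{align*}
\psi_1(\gamma_i^M) &= (i_M \gamma_i,\, 0,\, i_N \gamma_i,\, a_i \bmod k_N),\\
\psi_1(\sigma^M) &= (0,\, 1,\, 0,\, -1).
\end{align*}

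The cokernel is then simplified in two stages. First, quotienting by $\psi_1(\sigma^M) = (0,1,0,-1)$ collapses the two torsion summands: the common image of their generators is killed by both $k_M$ and $k_N$, so it generates a cyclic group of order $\gcd(k_M, k_N) = n_{MN}$, and an element $(a,b) \in \mathbb{Z}_{k_M} \oplus \mathbb{Z}_{k_N}$ descends to the class of $a+b$ in $\mathbb{Z}_{n_{MN}}$. Substituting into $\psi_1(\gamma_i^M)$ yields $(i_M \gamma_i,\, i_N \gamma_i,\, a_i \bmod n_{MN}) = (i_M \oplus i_N \oplus r)(\gamma_i)$, so $\operatorname{image}(\psi_1)$ descends exactly onto $\operatorname{image}(i_M \oplus i_N \oplus r)$ and the claimed cokernel formula for $H_1(X)$ follows. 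The main point requiring care is this second stage: one must verify that the $a_i$-contribution from $\phi_*\gamma_i^M$ lands in the $\mathbb{Z}_{k_N}$-factor and that the sign in $\phi_*\sigma^M = -\sigma^N$ produces the identification yielding consistent reductions modulo $n_{MN}$; once the adapted-framing conventions are fixed, the rest is formal book-keeping.
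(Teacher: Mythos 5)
Your proof is correct, and up to the last step it is exactly the paper's argument: the same collapse of the Mayer--Vietoris sequence to $H_1(X)\cong\operatorname{coker}\psi_1$, the same use of the adapted-framing splittings $H_1(M')\cong H_1(M)\oplus\mathbb{Z}_{k_M}$ from Corollary \ref{1 hom M M'}, and the same evaluation of $\psi_1$ on the basis via Lemma \ref{act phi hom 1}, yielding $\psi_1(\gamma_i^M)=(i_M\gamma_i,0,i_N\gamma_i,a_i)$ and $\psi_1(\sigma^M)=(0,1,0,-1)$. Where you diverge is the final identification of the cokernel. The paper isolates this step in a standalone algebraic lemma (Lemma \ref{lemma for comp H1}): it constructs the surjection $P$ induced by $\mathrm{Id}\oplus p$ with $p([x],[y])=[x+y]$ and verifies $\ker P=\operatorname{im}\psi$ by hand, the converse inclusion requiring a B\'ezout-style manipulation $u+v-h(x)=cn_{MN}=dk_M+ek_N$ and the choice $a=u-dk_M$. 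You instead quotient in two stages: first by the single element $\psi_1(\sigma^M)=(0,1,0,-1)$, computing directly that $(\mathbb{Z}_{k_M}\oplus\mathbb{Z}_{k_N})/\langle(1,-1)\rangle\cong\mathbb{Z}_{n_{MN}}$ via $(a,b)\mapsto a+b$, and then by the descended images of the $\psi_1(\gamma_i^M)$, which land exactly on the generators of $\operatorname{im}(i_M\oplus i_N\oplus r)$. This stage-wise quotient is legitimate (it is the third isomorphism theorem applied to the generating set of $\operatorname{im}\psi_1$) and effects precisely the paper's map $\mathrm{Id}\oplus p$, so the two arguments have identical content; yours is shorter and avoids the explicit B\'ezout computation, while the paper's formulation buys a reusable lemma stated for arbitrary abelian groups $H$, $G$ and arbitrary elements of the $\mathbb{Z}$-summand rather than just generators, together with an explicitly named isomorphism of cokernels. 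Your closing caution about where the $a_i$-contribution lands and about the sign in $\phi_*\sigma^M=-\sigma^N$ is well placed, though note that with the opposite sign one would quotient by $(1,1)$ instead and obtain an isomorphic cokernel via $(a,b)\mapsto a-b$, so the sign affects the bookkeeping rather than the conclusion.
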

\begin{proof} Since $\partial M'$ is connected, the Mayer-Vietoris sequence for $X$ shows that
\begin{equation*}
H_1(X)\cong \text{coker}(\psi_1\colon H_1(\partial M')\rightarrow H_1(M')\oplus H_1(N')).
\end{equation*}
The homomorphism $\psi_1$ is given on the standard basis by
\begin{align*}
\gamma_i^M&\mapsto (\mu_M\gamma_i^M,\mu_N\gamma_i^N+a_i\mu_N\sigma^N)\\
\sigma^M&\mapsto (\mu_M\sigma^M,-\mu_N\sigma^N).
\end{align*}
We want to replace $H_1(M')$ by $H_1(M)\oplus \mathbb{Z}_{k_M}$ and $H_1(N')$ by $H_1(N)\oplus \mathbb{Z}_{k_N}$, as in Proposition \ref{H_1 complement}. We choose isomorphisms $s$ as in Section \ref{subsect adapted framings}. Since we are working with adapted framings, the composition
\begin{equation}
H_1(\partial M')\stackrel{\mu_M}{\rightarrow} H_1(M')\stackrel{s}{\rightarrow} H_1(M)\oplus \mathbb{Z}_{k_M}
\end{equation}
is given on generators by
\begin{align*}
\gamma_i^M&\mapsto (i_M\gamma_i, 0)\\
\sigma^M&\mapsto (0,1),
\end{align*}
as before. An analogous map exists for $N$. If we add these maps together, the homomorphism $\psi_1$ can be replaced by
\begin{align*}
H_1(\partial M')&\rightarrow H_1(M)\oplus \mathbb{Z}_{k_M}\oplus H_1(N)\oplus \mathbb{Z}_{k_N},\\
\gamma_i^M&\mapsto (i_M\gamma_i,0,i_N\gamma_i,a_i)\\
\sigma^M&\mapsto (0,1,0,-1).
\end{align*}
Using the isomorphism $H_1(\Sigma\times S^1)\cong H_1(\Sigma)\oplus \mathbb{Z}\rightarrow H_1(\partial M')$ given by the framing $\tau_M$, we get the map
\begin{equation}\label{psi 1}
\begin{split}
H_1(\Sigma)\oplus\mathbb{Z}&\rightarrow H_1(M)\oplus \mathbb{Z}_{k_M}\oplus H_1(N)\oplus \mathbb{Z}_{k_N},\\
(\lambda,\alpha)&\mapsto (i_M\lambda,\text{$\alpha$ mod $k_M$},i_N\lambda,\text{$\langle C,\lambda\rangle -\alpha$ mod $k_N$}).
\end{split}
\end{equation}
To finish the proof, we have to show that this map has the same cokernel as the map
\begin{align*}
i_M\oplus i_N\oplus r\colon H_1(\Sigma)&\rightarrow H_1(M)\oplus H_1(N)\oplus \mathbb{Z}_{n_{MN}},\\
\lambda&\mapsto (i_M\lambda,i_N\lambda,\text{$\langle C,\lambda\rangle$ mod $n_{MN}$}).
\end{align*}
This follows from Lemma \ref{lemma for comp H1} below.
\end{proof}

In the proof we used a small algebraic lemma which can be formulated as follows: Let $H$ and $G$ be abelian groups and $f\colon H\rightarrow G$ and $h\colon H\rightarrow \mathbb{Z}$ homomorphisms. Let $k_M,k_N$ be positive integers with greatest common divisor $n_{MN}$. Consider the (well-defined) map 
\begin{align*}
p\colon \mathbb{Z}_{k_M}\oplus \mathbb{Z}_{k_N}&\rightarrow \mathbb{Z}_{n_{MN}},\\
([x],[y])&\mapsto [x+y].
\end{align*}  
\begin{lem}\label{lemma for comp H1} The homomorphisms
\begin{align*}
\psi\colon H\oplus \mathbb{Z}&\rightarrow G\oplus \mathbb{Z}_{k_M}\oplus \mathbb{Z}_{k_N}\\
(x,a)&\mapsto (f(x), \text{$a$ mod $k_M$}, \text{$h(x)-a$ mod $k_N$}),
\end{align*}
and 
\begin{align*}
\psi'\colon H &\rightarrow G\oplus \mathbb{Z}_{n_{MN}}\\
x&\mapsto (f(x), \text{$h(x)$ mod $n_{MN}$})
\end{align*}
have isomorphic cokernels. The isomorphism is induced by $\text{Id}_G\oplus p$.
\end{lem}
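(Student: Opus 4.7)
The plan is to build an explicit commutative square, verify that the vertical map $\mathrm{Id}\oplus p$ is surjective onto $G\oplus\mathbb{Z}_{n_{MN}}$, and then show that the preimage of $\mathrm{image}(\psi')$ under $\mathrm{Id}\oplus p$ equals $\mathrm{image}(\psi)$. Once these are established, a standard isomorphism-theorem argument produces the desired isomorphism of cokernels.

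First I would write down the square
\begin{equation*}
\begin{CD}
H\oplus\mathbb{Z} @>\psi>> G\oplus\mathbb{Z}_{k_M}\oplus\mathbb{Z}_{k_N} \\
@VV\pi V @VV\mathrm{Id}\oplus p V \\
H @>\psi'>> G\oplus\mathbb{Z}_{n_{MN}}
\end{CD}
\end{equation*}
where $\pi(x,a)=x$, and check commutativity directly: $(\mathrm{Id}\oplus p)\psi(x,a)=(f(x),[a+h(x)-a])=(f(x),[h(x)])=\psi'(x)$. This already gives a well-defined induced homomorphism $\overline{q}\colon\mathrm{coker}(\psi)\to\mathrm{coker}(\psi')$ via $\mathrm{Id}\oplus p$.

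Next I would verify that $p$ is surjective: by B\'ezout, $n_{MN}=\alpha k_M+\beta k_N$ for some integers $\alpha,\beta$, so $p([0],[1])$ generates $\mathbb{Z}_{n_{MN}}$ (alternatively, $p(\mathbb{Z}_{k_M}\oplus 0)=k_M\mathbb{Z}/n_{MN}\mathbb{Z}+\ldots$, etc.). Hence $\mathrm{Id}\oplus p$ is surjective, which immediately gives surjectivity of $\overline{q}$.

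The key step is injectivity of $\overline{q}$, which amounts to showing that if $(g,[m],[n])\in G\oplus\mathbb{Z}_{k_M}\oplus\mathbb{Z}_{k_N}$ satisfies $(\mathrm{Id}\oplus p)(g,[m],[n])\in\mathrm{image}(\psi')$, then $(g,[m],[n])\in\mathrm{image}(\psi)$. So suppose there exists $x\in H$ with $f(x)=g$ and $h(x)\equiv m+n\pmod{n_{MN}}$. I need to produce $a\in\mathbb{Z}$ with $a\equiv m\pmod{k_M}$ and $h(x)-a\equiv n\pmod{k_N}$, i.e.\ $a\equiv h(x)-n\pmod{k_N}$. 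By the Chinese Remainder Theorem in its general form, such an $a$ exists iff the two required residues agree modulo $\gcd(k_M,k_N)=n_{MN}$, i.e.\ iff $m\equiv h(x)-n\pmod{n_{MN}}$, which is precisely the hypothesis. Then $\psi(x,a)=(g,[m],[n])$, proving injectivity.

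The only subtle point is the CRT step; the rest is diagram chasing. Combining the three items yields the isomorphism $\mathrm{coker}(\psi)\xrightarrow{\cong}\mathrm{coker}(\psi')$ induced by $\mathrm{Id}\oplus p$, as claimed.
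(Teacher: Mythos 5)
Your proof is correct and follows essentially the same route as the paper: both exploit the surjectivity of $\mathrm{Id}\oplus p$ and show that the preimage of $\mathrm{image}(\psi')$ is exactly $\mathrm{image}(\psi)$, the only difference being that you invoke the general Chinese Remainder Theorem where the paper carries out the equivalent B\'ezout computation by hand (choosing $c,d,e$ with $u+v-h(x)=cn_{MN}=dk_M+ek_N$ and setting $a=u-dk_M$). Note only that your B\'ezout argument for the surjectivity of $p$ is unnecessary, since already $p([1],[0])=[1]$ generates $\mathbb{Z}_{n_{MN}}$.
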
 
\begin{proof} The map $\text{Id}_G\oplus p$ is a surjection, hence it induces a surjection 
\begin{equation*}
P\colon G\oplus \mathbb{Z}_{k_M}\oplus \mathbb{Z}_{k_N}\rightarrow \text{coker}\,\psi'.
\end{equation*}
We compute the kernel of $P$ and show that it is equal to the image of $\psi$. This will prove the lemma. Suppose an element is in the image of $\psi$. Then it is of the form $(f(x), \text{$a$ mod $k_M$}, \text{$h(x)-a$ mod $k_N$})$. The image under $\text{Id}_G\oplus p$ of this element is equal to $(f(x),\text{$h(x)$ mod $n_{MN}$})$, hence in the image of $\psi'$. Conversely, let $(g, \text{$u$ mod $k_M$}, \text{$v$ mod $k_N$})$ be an element in the kernel of $P$. The element maps under $\text{Id}_G\oplus p$ to $(g, \text{$u+v$ mod $n_{MN}$})$, hence there exists an element $x\in H$ such that $g=f(x)$ and $u+v\equiv h(x)$ mod $n_{MN}$. We can choose integers $c,d,e$ such that the following equations hold:
\begin{equation*}
u+v-h(x)=cn_{MN}=dk_M+ek_N.
\end{equation*}
Define an integer $a=u-dk_M$. Then:
\begin{align*}
u&\equiv \text{$a$ mod $k_M$}\\
v&\equiv \text{$h(x)-a+ek_N=h(x)-a$ mod $k_N$}.
\end{align*}
Hence $(g,\text{$u$ mod $k_M$}, \text{$v$ mod $k_N$})=\psi(x,a)$ and the element is in the image of $\psi$. 
\end{proof} 
\begin{rem} Note that Theorem \ref{H 1 for X} only holds if the cohomology class $C$ is calculated in adapted framings. For a given gluing diffeomorphism, the cohomology class $C$ depends on the choice of adapted framings. However, the result $H_1(X;\mathbb{Z})$ does not depend on the choice of framings, only on the gluing diffeomorphism itself. This follows because a change of adapted framings changes the coefficients $a_i=\langle C,\gamma_i\rangle$ to
\begin{equation*}
a_i'=\langle C',\gamma_i\rangle=a_i-\beta_i^Mk_M-\beta_i^Nk_N,
\end{equation*}
where $\beta_i^M,\beta_i^N$ are certain integers. This implies that the map $r$ defined above stays the same.
\end{rem}
An immediate corollary of Theorem \ref{H 1 for X} is the following.
\begin{cor}\label{cor H_1 n_MN=1} If the divisibilities $k_M$ and $k_N$ are coprime, then $H_1(X;\mathbb{Z})$ is isomorphic to the cokernel of $i_M\oplus i_N\colon H_1(\Sigma;\mathbb{Z}) \longrightarrow  H_1(M;\mathbb{Z})\oplus H_1(N;\mathbb{Z})$.
\end{cor}

\section{Calculation of $H^2(X)$}\label{sect second cohom Gompf sum}

The computation of $H^2(X)$ is based on the following lemma: 
\begin{lem}\label{lem short exact seq for cal H^2X} The following part of the Mayer-Vietoris sequence 
\begin{equation*}
H^1(M')\oplus H^1(N')\stackrel{\psi_1^*}{\rightarrow}H^1(\partial M')\rightarrow H^2(X)\stackrel{\eta_M^*\ominus \eta_N^*}{\longrightarrow}H^2(M')\oplus H^2(N')\stackrel{\psi_2^*}{\rightarrow}H^2(\partial M').
\end{equation*}
induces a short exact sequence
\begin{equation}\label{ses for H^2}
0\longrightarrow \text{coker}\,\psi_1^*\longrightarrow H^2(X)\longrightarrow \text{ker}\,\psi_2^*\longrightarrow 0.
\end{equation}
\end{lem}
By equation \eqref{iM*+iN* replaced} there exists an isomorphism 
\begin{equation}\label{coker psi1 term}
\text{coker}\,\psi_1^*\cong\text{coker}\,(i_M^*+i_N^*) \oplus \mathbb{Z}PD(\Sigma^M).
\end{equation}
We calculate $\text{coker}\,(i_M^*+i_N^*)$ in subsection \ref{subsection rim tori in X} and $\text{ker}\,\psi_2^*$ in subsection \ref{subsection vanishing classes}.

\subsection{Rim tori}\label{subsection rim tori in X}

We can map every rim torus in $M'$ under the inclusion $\eta_M\colon M'\rightarrow X$ to a homology class in $X$.
\begin{defn}\label{def rim tori 2} We call $\eta_M\circ r_M(\alpha)$ the rim torus in $X$ associated to the element $\alpha \in H^1(\Sigma)$ via $M'$. The group $R(X)$ of rim tori in $X$ is defined as the image of the homomorphism
\begin{equation*}
r_X=\eta_M\circ r_M\colon H^1(\Sigma)\rightarrow H_2(X).
\end{equation*} 
Similarly, we can map every rim torus in $N'$ under the inclusion $\eta_N\colon N'\rightarrow X$ to a homology class in $X$. This defines a homomorphism 
\begin{equation*}
r_X'=\eta_N\circ r_N\colon H^1(\Sigma)\rightarrow H_2(X).
\end{equation*} 
\end{defn}
The rim tori in $X$ coming via $M'$ and $N'$ are related in the following way:
\begin{lem}\label{rim from N is minus M} Let $\alpha$ be a class in $H^1(\Sigma)$. Then $r_X(\alpha)=-r_X'(\alpha)$. Hence for the same element $\alpha\in H^1(\Sigma)$ the rim torus in $X$ coming via $N'$ is minus the rim torus coming via $M'$.
\end{lem}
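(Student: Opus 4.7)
The plan is to combine two ingredients: the Mayer--Vietoris relation that expresses how a boundary class pushed into $M'$ and $N'$ agrees inside $X$, and Lemma \ref{computation action phi H_2}, which tells us exactly how $\phi_*$ acts on the classes $\Gamma_i^M$.

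First, from the Mayer--Vietoris sequence for $X=M'\cup N'$ described in Section \ref{section calculation H_1X, H^1X}, the composition $H_2(\partial M')\to H_2(M')\oplus H_2(N')\to H_2(X)$ is zero. Since $\psi_2(\beta)=(\mu_M\beta,\mu_N\phi_*\beta)$ and the second map is $(x,y)\mapsto \eta_M x-\eta_N y$, this yields the identity
\begin{equation*}
\eta_M\mu_M\beta=\eta_N\mu_N\phi_*\beta\qquad\text{for every }\beta\in H_2(\partial M').
\end{equation*}

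Second, I unwind the definitions of $r_X$ and $r_X'$. For $\alpha\in H^1(\Sigma)$, write $\alpha=\sum_{i=1}^{2g}c_i\gamma_i^*$ in the dual basis. By construction of the bases on $\partial M'$ and $\partial N'$ via the framings $\tau_M,\tau_N$, one has $PD(p_M^*\alpha)=\sum c_i\Gamma_i^M$ and $PD(p_N^*\alpha)=\sum c_i\Gamma_i^N$. Applying the first part of Lemma \ref{computation action phi H_2}, namely $\phi_*\Gamma_i^M=-\Gamma_i^N$, and extending linearly gives
\begin{equation*}
\phi_*\bigl(PD(p_M^*\alpha)\bigr)=-PD(p_N^*\alpha).
\end{equation*}

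Now substitute $\beta=PD(p_M^*\alpha)$ into the Mayer--Vietoris identity:
\begin{equation*}
r_X(\alpha)=\eta_M\mu_M\,PD(p_M^*\alpha)=\eta_N\mu_N\,\phi_*\bigl(PD(p_M^*\alpha)\bigr)=\eta_N\mu_N\bigl(-PD(p_N^*\alpha)\bigr)=-r_X'(\alpha),
\end{equation*}
which is the asserted equality. There is no real obstacle here; the only point that needs a moment of care is bookkeeping the sign, which comes entirely from the fact that $\phi$ is orientation reversing (already encoded in Lemma \ref{computation action phi H_2}), and checking that the framings $\tau_M$ and $\tau_N$ are used compatibly with the identifications $PD\circ p_M^*$ and $PD\circ p_N^*$, so that the only coefficients $a_i$ appearing in $\phi_*\Sigma^M$ do not interfere with the action on the $\Gamma_i$'s.
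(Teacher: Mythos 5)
Your proof is correct and takes essentially the same route as the paper: both arguments combine the Mayer--Vietoris relation $\eta_M\mu_M\beta=\eta_N\mu_N\phi_*\beta$ for $\beta\in H_2(\partial M')$ with the identity $\phi_*\Gamma_i^M=-\Gamma_i^N$ from Lemma \ref{computation action phi H_2}. The only difference is cosmetic: the paper carries out the computation in the basis $\Gamma_i^M$, whereas you phrase the same identification via $PD\circ p_M^*$, which is exactly the isomorphism in equation \eqref{rim torus for elements H^1}.
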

\begin{proof}
The action of the gluing diffeomorphism $\phi$ on second homology is given by $\phi_*\Gamma_i^M=-\Gamma_i^N$. Let $\alpha\in H^1(\Sigma)$ be a fixed class,
\begin{equation*}
\alpha=\sum_{i=1}^{2g}c_i\gamma_i^*.
\end{equation*}
The rim tori in $M'$ and $N'$ associated to $\alpha$ are given by
\begin{equation*}
a_M=\sum_{i=1}^{2g}c_i\mu_M\Gamma_i^M,\quad a_N=\sum_{i=1}^{2g}c_i\mu_N\Gamma_i^N=-\sum_{i=1}^{2g}c_i\mu_N\phi_*\Gamma_i^M.
\end{equation*}
In $X$ we get
\begin{align*}
\eta_Ma_M+\eta_Na_N &=\sum_{i=1}^{2g}c_i(\eta_M\mu_M-\eta_N\mu_N\phi_*)\Gamma_i^M\\
&=0,
\end{align*}
by the Mayer-Vietoris sequence for $X$. This proves the claim.
\end{proof}

\begin{defn}\label{defn rim tori C} Let $R_C$ denote the rim torus in $X$ determined by the class 
\begin{equation*}
-\sum_{i=1}^{2g}a_i\Gamma_i^M\in H_2(\partial M')
\end{equation*}
under the inclusion of $\partial M'$ in $X$ as in Definition \ref{def rim tori 2}. Here $a_i$ are the integers from Definition \ref{defn coeff ai}. This class is equal to the image of the class $\sum_{i=1}^{2g}a_i\Gamma_i^N\in H_2(\partial N')$ under the inclusion of $\partial N'$ in $X$.
\end{defn}
Recall that $\Sigma_X$ is the class in $X$ which is the image of the push-off $\Sigma^M$ under the inclusion $M'\rightarrow X$. Similarly, $\Sigma_X'$ is the image of the push-off $\Sigma^N$ under the inclusion $N'\rightarrow X$. 
\begin{lem}\label{lem difference sigmaXX' RC} The classes $\Sigma_X'$ and $\Sigma_X$ in $X$ differ by  
\begin{equation*}
\Sigma_X'-\Sigma_X=R_C.
\end{equation*}
\end{lem}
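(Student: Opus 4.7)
The proof will be a direct Mayer--Vietoris computation, relying on the formula for $\phi_*$ on second homology established in Lemma \ref{computation action phi H_2} and the sign-reversal phenomenon for rim tori recorded in Lemma \ref{rim from N is minus M}.

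The plan is to apply the Mayer--Vietoris sequence for $X = M' \cup N'$ in degree two. Exactness at $H_2(M') \oplus H_2(N')$ tells us that for any class $\alpha \in H_2(\partial M')$, the relation
\begin{equation*}
\eta_M \mu_M \alpha \;=\; \eta_N \mu_N \phi_* \alpha
\end{equation*}
holds in $H_2(X)$. I would specialize this to $\alpha = \Sigma^M \in H_2(\partial M')$. By definition $\eta_M \mu_M \Sigma^M = \Sigma_X$, and by Lemma \ref{computation action phi H_2},
\begin{equation*}
\phi_* \Sigma^M \;=\; -\sum_{i=1}^{2g} a_i \Gamma_i^N + \Sigma^N.
\end{equation*}
Applying $\eta_N \mu_N$ to this expression and recognizing $\eta_N \mu_N \Sigma^N = \Sigma_X'$, I obtain
\begin{equation*}
\Sigma_X \;=\; \Sigma_X' \,-\, \sum_{i=1}^{2g} a_i \,\eta_N \mu_N \Gamma_i^N.
\end{equation*}

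It then remains to identify the correction term with $R_C$. By Definition \ref{defn rim tori C}, $R_C$ is the image in $H_2(X)$ of the class $-\sum_i a_i \Gamma_i^M \in H_2(\partial M')$ included via $\partial M' \hookrightarrow M' \hookrightarrow X$, i.e.\ $R_C = -\sum_i a_i \eta_M \mu_M \Gamma_i^M$. Applying the Mayer--Vietoris identity once more with $\alpha = \Gamma_i^M$, and using $\phi_* \Gamma_i^M = -\Gamma_i^N$, gives $\eta_M \mu_M \Gamma_i^M = -\eta_N \mu_N \Gamma_i^N$ (this is precisely what underlies Lemma \ref{rim from N is minus M}). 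Substituting yields
\begin{equation*}
R_C \;=\; \sum_{i=1}^{2g} a_i \,\eta_N \mu_N \Gamma_i^N,
\end{equation*}
so that $\Sigma_X' - \Sigma_X = R_C$, as claimed.

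There is no real obstacle here; the argument is essentially bookkeeping with signs coming from the orientation-reversing nature of $\phi$ and the twist encoded by the coefficients $a_i = \langle C, \gamma_i \rangle$. The only care needed is to keep the two equivalent descriptions of $R_C$ (via $\partial M'$ and via $\partial N'$) consistent, which is exactly what the Mayer--Vietoris relation for $\Gamma_i^M$ ensures.
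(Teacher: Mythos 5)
Your proof is correct and follows essentially the same route as the paper: the paper's own (very terse) proof simply cites the formula $\phi_*\Sigma^M=-\bigl(\sum_{i=1}^{2g}a_i\Gamma_i^N\bigr)+\Sigma^N$ from Lemma \ref{computation action phi H_2}, leaving implicit exactly the Mayer--Vietoris identity $\eta_M\mu_M\alpha=\eta_N\mu_N\phi_*\alpha$ and the equivalence of the two descriptions of $R_C$ (already recorded in Definition \ref{defn rim tori C}) that you spell out. Your version is just a more explicit write-up of the same argument, with all signs and inclusions verified.
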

\begin{proof}
This follows, since by Lemma \ref{computation action phi H_2}
\begin{equation*}
\phi_*\Sigma^M=-\left(\sum_{i=1}^{2g}a_i\Gamma_i^N\right)+\Sigma^N.
\end{equation*}
By the Mayer-Vietoris sequence for $X$ we get
\begin{equation*}
\Sigma_X=\eta_M\mu_M\Sigma^M=\eta_N\mu_N\phi_*\Sigma^M=-R_C+\Sigma_X'.
\end{equation*}
\end{proof}
The difference is due to the fact that the diffeomorphism $\phi$ does not necessarily match the classes $\Sigma^M$ and $\Sigma^N$. We now prove the main theorem in this subsection. 

\begin{thm}\label{isom coker R rim} Let $i_M^*, i_N^*$ denote the homomorphisms
\begin{equation*}
i_M^*\colon H^1(M;\mathbb{Z})\rightarrow H^1(\Sigma;\mathbb{Z}),\quad\text{and}\quad i_N^*\colon H^1(N;\mathbb{Z})\rightarrow H^1(\Sigma;\mathbb{Z}).
\end{equation*}
Then the kernel of the rim tori map $r_X$ is equal to the image of $i_M^*+i_N^*$. Hence the map $r_X$ induces an isomorphism
\begin{equation*}
\text{coker}\,(i_M^*+i_N^*)\stackrel{\cong}{\longrightarrow} R(X).
\end{equation*}
\end{thm}
\begin{lem}\label{lem ker etaM is im rMiN*} The kernel of the map $\eta_M$ is equal to the image of $r_M\circ i_N^*$.
\end{lem}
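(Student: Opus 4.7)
The plan is to chase an element $x \in H_2(M')$ with $\eta_M x = 0$ through the Mayer--Vietoris sequence for $X = M' \cup_\phi N'$ and characterize it in terms of the structure of $\ker\mu_N$ already computed in Section \ref{section hom cohom of M'}.

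First, by exactness of the Mayer--Vietoris sequence, $\eta_M x = 0$ in $H_2(X)$ if and only if the pair $(x,0) \in H_2(M') \oplus H_2(N')$ lies in the image of $\psi_2$. Thus $x \in \ker\eta_M$ precisely when there exists $\alpha \in H_2(\partial M')$ with $\mu_M\alpha = x$ and $\mu_N(\phi_*\alpha) = 0$. So the task reduces to describing $\mu_M\bigl(\phi_*^{-1}(\ker\mu_N)\bigr)$.

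Next, I would invoke the description of $\ker\mu_N$ coming from the analogue of sequence \eqref{seq calc H2dM'/ker muM} applied to $N$: under the basis identification
\[
H_2(\partial N') \;\cong\; H^1(\Sigma) \oplus \mathbb{Z}\,PD(\Sigma^N), \qquad \sum c_i' \gamma_i^* \longleftrightarrow \sum c_i' \Gamma_i^N,
\]
the kernel of $\mu_N$ consists exactly of elements of the form $\bigl(i_N^*\beta,\,0\bigr)$ for $\beta \in H^1(N)$. Now write a general element $\alpha \in H_2(\partial M')$ as $\alpha = \sum_i c_i \Gamma_i^M + c\,\Sigma^M$ and apply Lemma \ref{computation action phi H_2} to get
\[
\phi_*\alpha \;=\; -\sum_{i=1}^{2g}(c_i + c\,a_i)\Gamma_i^N \;+\; c\,\Sigma^N.
\]
For this to lie in $\ker\mu_N$, one needs $c = 0$ (the $\Sigma^N$ coefficient must vanish) and $\sum_i c_i \gamma_i^* \in i_N^*(H^1(N))$ (since $i_N^*(H^1(N))$ is a subgroup, the overall sign is irrelevant). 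Thus
\[
\phi_*^{-1}(\ker\mu_N) \;=\; \bigl\{\, \textstyle\sum_i c_i\Gamma_i^M \,:\, \sum_i c_i\gamma_i^* \in i_N^*(H^1(N)) \,\bigr\}.
\]

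Applying $\mu_M$ to this set and using the definition $r_M = \mu_M \circ PD \circ p_M^*$ from \eqref{defn eq rim tori map rM}, which on the standard basis sends $\gamma_i^*$ to $\mu_M \Gamma_i^M$, gives exactly $r_M\bigl(i_N^*(H^1(N))\bigr) = \mathrm{Im}(r_M \circ i_N^*)$. Combined with the first step this yields $\ker\eta_M = \mathrm{Im}(r_M \circ i_N^*)$. The main bookkeeping to be careful about is matching the rim-torus identification in $H_2(\partial N')$ with the kernel description from sequence \eqref{seq calc H2dM'/ker muM}; once that is straight, the conclusion is immediate from Lemma \ref{computation action phi H_2}.
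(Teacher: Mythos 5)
Your proof is correct, and it reaches the same computational core as the paper by a slightly different entry point. The paper obtains $\ker\eta_M$ from the long exact sequence of the pair $(X,M')$, using the excision isomorphism $H_3(N',\partial N')\cong H_3(X,M')$ in a commutative diagram to identify $\ker\eta_M=\mathrm{im}\bigl(\mu_M\circ\phi_*^{-1}\circ\partial\bigr)$, and then replaces $\mathrm{im}\,\partial$ by $\mathrm{im}\bigl(PD\circ p_N^*\circ i_N^*\bigr)$ via Lemma \ref{eq poinc dual H2M'dM'}, the Mayer--Vietoris identity $\mu_N^*\rho_N^*=p_N^*i_N^*$, and the coordinate-free identity $\phi_*^{-1}\circ PD\circ p_N^*=-PD\circ p_M^*$. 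You instead use exactness of the Mayer--Vietoris sequence for $X=M'\cup N'$ at $H_2(M')\oplus H_2(N')$ to get $\ker\eta_M=\mu_M\bigl(\phi_*^{-1}(\ker\mu_N)\bigr)$, and since $\ker\mu_N=\mathrm{im}\,\partial$ by exactness of the pair sequence for $(N',\partial N')$, the two reductions are literally equivalent; from there both arguments invoke the same two ingredients, namely the description of $\ker\mu_N$ encoded in the exact sequence \eqref{seq calc H2dM'/ker muM} applied to $N$ (whose derivation uses exactly the duality and Mayer--Vietoris facts the paper cites) and the action of $\phi_*$ from Lemma \ref{computation action phi H_2}, which you apply explicitly in the basis $\Gamma_i^N,\Sigma^N$ rather than as the identity $\phi_*^{-1}\circ PD\circ p_N^*=-PD\circ p_M^*$. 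What your route buys is self-containedness within the machinery already running in Section \ref{section calculation H_1X, H^1X} (no excision diagram needed) and transparent sign bookkeeping, including the correct observation that the vanishing of the $\Sigma^N$-coefficient forces $c=0$ and that the overall sign is harmless because $i_N^*(H^1(N))$ is a subgroup; the paper's diagram argument is more structural and avoids choosing a basis.
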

\begin{proof} Consider the following diagram:
\begin{equation*}
\begin{CD}
H_3(X,M') @>\partial >> H_2(M') @>\eta_M >> H_2(X)\\
@A\cong AA @A\mu_M\phi_*^{-1} AA @A\eta_N AA\\
H_3(N',\partial N') @>\partial >> H_2(\partial N') @>>> H_2(N')
\end{CD}
\end{equation*}
The horizontal parts come from the long exact sequences of pairs, the vertical parts come from inclusion. The isomorphism on the left is by excision. Hence the kernel of $\eta_M$ is given by the image of
\begin{equation*}
\mu_M\circ\phi_*^{-1}\circ\partial\colon H_3(N',\partial N')\rightarrow H_2(M').
\end{equation*}
Given the definition of the rim tori map $r_M$ in equation \eqref{defn eq rim tori map rM}, we have to show that this is equal to the image of 
\begin{equation*}
\mu_M\circ PD\circ p_M^*\circ i_N^*\colon H^1(N)\rightarrow H_2(M').
\end{equation*}
This follows in three steps: First, by Lemma \ref{eq poinc dual H2M'dM'} and Proposition \ref{calculation cohomology H^1M'}, the image of $\partial$ is equal to the image of $PD\circ \mu_N^*\circ \rho_N^*$. By the Mayer-Vietoris sequence for $N$
\begin{equation*}
H^1(N)\stackrel{\rho_N^*\ominus i_N^*}{\longrightarrow}H^1(N')\oplus H^1(\Sigma)\stackrel{\mu_N^*+p_N^*}{\longrightarrow}H^1(\partial N')
\end{equation*}
we have $\mu_N^*\circ \rho_N^*=p_N^*\circ i_N^*$. Finally, we use the identity
\begin{equation*}
\phi_*^{-1}\circ PD\circ p_N^*=-PD\circ p_M^*,
\end{equation*}
which is equivalent to the identity $\phi_*\Gamma_i^M=-\Gamma_i^N$, for all $i=1,\dotsc,2g$, from Lemma \ref{computation action phi H_2}.     
\end{proof}
We can now prove Theorem \ref{isom coker R rim}.
\begin{proof}
Suppose that $\alpha\in H^1(\Sigma)$ is in the kernel of $r_X=\eta_M\circ r_M$. This happens if and only if $r_M(\alpha)$ is in the kernel of $\eta_M$. By Lemma \ref{lem ker etaM is im rMiN*} this is equivalent to the existence of a class $\beta_N\in H^1(N)$ with
\begin{equation*}
r_M(\alpha)=(r_M\circ i_N^*)(\beta_N).
\end{equation*}
Since the kernel of $r_M$ is equal to the image of $i_M^*$ by Lemma \ref{lem rim ker mu}, this is equivalent to the existence of a class $\beta_M\in H^1(M)$ with 
\begin{equation*}
\alpha=i_M^*\beta_M+i_N^*\beta_N.
\end{equation*}
This shows that the kernel of $r_X$ is equal to the image of $i_M^*+i_N^*$ and proves the claim.
\end{proof}

\begin{cor} The rank of the abelian subgroup $R(X)$ of rim tori in $X$ is equal to the integer $d$ from Definition \ref{defn dimension d of ker iM*+iN*}.
\end{cor}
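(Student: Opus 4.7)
The plan is to combine the isomorphism from Theorem \ref{isom coker R rim} with the dimension count of Lemma \ref{d Ker Coker}. By Theorem \ref{isom coker R rim} we have an isomorphism of abelian groups
\begin{equation*}
R(X)\cong \mathrm{coker}\,(i_M^*+i_N^*\colon H^1(M;\mathbb{Z})\oplus H^1(N;\mathbb{Z})\to H^1(\Sigma;\mathbb{Z})),
\end{equation*}
so computing the rank of $R(X)$ amounts to computing the rank of this cokernel.

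The rank of a finitely generated abelian group $A$ equals $\dim_{\mathbb{R}}(A\otimes_{\mathbb{Z}}\mathbb{R})$, and tensoring with $\mathbb{R}$ is exact, so
\begin{equation*}
\mathrm{rank}\,R(X)=\dim_{\mathbb{R}}\mathrm{coker}\bigl(i_M^*+i_N^*\colon H^1(M;\mathbb{R})\oplus H^1(N;\mathbb{R})\to H^1(\Sigma;\mathbb{R})\bigr).
\end{equation*}
By Lemma \ref{d Ker Coker}, the right-hand side equals $d=\dim\ker(i_M\oplus i_N)$ over $\mathbb{R}$, which is precisely the integer from Definition \ref{defn dimension d of ker iM*+iN*}.

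I do not expect any serious obstacle here; the only mild subtlety is the passage from integral to real coefficients, which is handled simply by the exactness of $-\otimes_{\mathbb{Z}}\mathbb{R}$ and by recognizing that the torsion of $\mathrm{coker}(i_M^*+i_N^*)$ does not contribute to the rank. All of the real work has already been done in Theorem \ref{isom coker R rim} (identifying $R(X)$ with the cokernel on the cohomology side) and in Lemma \ref{d Ker Coker} (dualizing between $i_M\oplus i_N$ and $i_M^*+i_N^*$ over $\mathbb{R}$).
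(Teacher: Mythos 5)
Your proposal is correct and is exactly the argument the paper intends: the corollary is stated immediately after Theorem \ref{isom coker R rim} precisely because $R(X)\cong\mbox{coker}\,(i_M^*+i_N^*)$ combined with the computation $\mbox{dim coker}\,(i_M^*+i_N^*)=d$ from Lemma \ref{d Ker Coker} gives the rank at once. Your added care about passing from integral to real coefficients (right-exactness of $-\otimes_{\mathbb{Z}}\mathbb{R}$, torsion not affecting rank) is the standard bookkeeping the paper leaves implicit, so there is no substantive difference.
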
 
Note that the rim tori group can contain torsion elements.

\subsection{Vanishing classes}\label{subsection vanishing classes} For the calculation of $H^2(X)$ it remains to calculate the kernel of
\begin{equation*}
\psi_2^*\colon H^2(M')\oplus H^2(N')\rightarrow H^2(\partial M'),
\end{equation*}
where $\psi_2^*=\mu_M^*+\phi^*\mu_N^*$, as in equation \eqref{ses for H^2}. We will first replace this map by an equivalent map. By Corollary \ref{cor calc muM* P(M)_A_M} and Lemma \ref{computation action phi H^2} we can replace the map
\begin{equation*}
\phi^*\colon H^2(\partial N')\rightarrow H^2(\partial M')
\end{equation*}
by the map
\begin{align*}
\mathbb{Z}\oplus H_1(\Sigma)&\longrightarrow \mathbb{Z}\oplus H_1(\Sigma)\\
(x,y)&\mapsto (x-\langle C,y\rangle, -y).   
\end{align*}
We then get:
\begin{lem} The map $\psi_2^*=\mu_M^*+\phi^*\mu_N^*$ can be replaced by the homomorphism
\begin{equation*}
P(M)_{A_M}\oplus P(N)_{A_N}\oplus \mathbb{Z}B_M\oplus\mathbb{Z}B_N \oplus\text{ker}\,i_M\oplus\text{ker}\,i_N\longrightarrow  \mathbb{Z}\oplus H_1(\Sigma)
\end{equation*}
given by
\begin{equation*}
(c_M, c_N, x_M, x_N, \alpha_M, \alpha_N) \mapsto  (x_Mk_M+x_Nk_N-\langle C,\alpha_N \rangle, \alpha_M-\alpha_N).
\end{equation*}
\end{lem}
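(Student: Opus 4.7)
The proof is purely a bookkeeping exercise: every ingredient needed to compute $\psi_2^* = \mu_M^* + \phi^*\mu_N^*$ has already been established in the paper, so I plan to just substitute and add, then verify the result matches the claimed formula.

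The plan is as follows. First, I would take an arbitrary element of the domain, written as $(c_M^\perp, x_M B_M, \alpha_M) \oplus (c_N^\perp, x_N B_N, \alpha_N)$ in the direct sum decomposition $H^2(M') \oplus H^2(N') \cong \bigl(P(M)_{A_M} \oplus \mathbb{Z}B_M \oplus \ker i_M\bigr) \oplus \bigl(P(N)_{A_N} \oplus \mathbb{Z}B_N \oplus \ker i_N\bigr)$ given by Proposition \ref{lem H^2 M, M'} together with the $A_M, B_M$ refinement. Next, I apply Corollary \ref{cor calc muM* P(M)_A_M} to rewrite
\begin{equation*}
\mu_M^*(c_M^\perp, x_M B_M, \alpha_M) = (x_M k_M, \alpha_M) \in \mathbb{Z} \oplus H_1(\Sigma) \cong H^2(\partial M'),
\end{equation*}
and the analogous formula (stated explicitly just before the lemma) to rewrite
\begin{equation*}
\mu_N^*(c_N^\perp, x_N B_N, \alpha_N) = (x_N k_N, \alpha_N) \in \mathbb{Z} \oplus H_1(\Sigma) \cong H^2(\partial N').
\end{equation*}

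Then I would apply the formula $\phi^*(x,y) = (x - \langle C, y\rangle, -y)$ from the text (itself a direct consequence of Lemma \ref{computation action phi H_2} via equation \eqref{or rev cup cap}) to obtain
\begin{equation*}
\phi^*\mu_N^*(c_N^\perp, x_N B_N, \alpha_N) = \bigl(x_N k_N - \langle C, \alpha_N\rangle,\; -\alpha_N\bigr).
\end{equation*}
Adding this to the expression for $\mu_M^*(c_M)$ coordinate-wise yields
\begin{equation*}
\bigl(x_M k_M + x_N k_N - \langle C, \alpha_N\rangle,\; \alpha_M - \alpha_N\bigr),
\end{equation*}
which is exactly the formula claimed in the lemma. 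Since $c_M^\perp \in P(M)_{A_M}$ and $c_N^\perp \in P(N)_{A_N}$ lie in the kernel of $\mu_M^*$ and $\mu_N^*$ respectively (this is the content of Corollary \ref{cor calc muM* P(M)_A_M} — only the $B$-component and the $\ker i$ component contribute), they do not appear in the final formula, as expected.

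There is no genuine obstacle here; the only point that requires mild care is keeping the identifications $H^2(\partial M') \cong \mathbb{Z}\oplus H_1(\Sigma)$ and $H^2(\partial N') \cong \mathbb{Z}\oplus H_1(\Sigma)$ consistent with the framings $\tau_M$ and $\tau_N$, so that the formula for $\phi^*$ applies between the correct copies of $\mathbb{Z}\oplus H_1(\Sigma)$. Once this convention is fixed, the computation is immediate and the lemma follows.
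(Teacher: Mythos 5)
Your proposal is correct and follows exactly the paper's (implicit) argument: the lemma is stated immediately after the three displayed replacements for $\mu_M^*$, $\mu_N^*$ and $\phi^*$, and the intended proof is precisely your coordinate-wise substitution $\phi^*\mu_N^*(c_N,x_NB_N,\alpha_N)=(x_Nk_N-\langle C,\alpha_N\rangle,-\alpha_N)$ added to $\mu_M^*(c_M,x_MB_M,\alpha_M)=(x_Mk_M,\alpha_M)$, with $P(M)_{A_M}$ and $P(N)_{A_N}$ dying under $\mu^*$. Your closing remark about keeping the framing-induced identifications $H^2(\partial M')\cong\mathbb{Z}\oplus H_1(\Sigma)\cong H^2(\partial N')$ consistent is exactly the convention the paper fixes before stating the lemma.
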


\begin{defn}\label{defn general vanishing classes} We consider the map
\begin{align*}
f\colon \mathbb{Z}B_M\oplus\mathbb{Z}B_N\oplus \text{ker}\,(i_M\oplus i_N)&\longrightarrow\mathbb{Z}\\
(x_MB_M,x_NB_N,\alpha)&\mapsto x_Mk_M+x_Nk_N-\langle C,\alpha\rangle.
\end{align*}
Let $S(X)$ denote the kernel of the map $f$. We call $S(X)$ the group of {\em vanishing classes} of $X$, as in \cite{FSfam}. It is a free abelian group of rank $d+1$ by Lemma \ref{d Ker Coker}. 
\end{defn}
These classes have the following interpretation:
\begin{lem}\label{lem interpretation S(X)} The elements $(x_MB_M,x_NB_N,\alpha)$ in $S(X)$ are precisely those elements in $\mathbb{Z}B_M\oplus\mathbb{Z}B_N\oplus \text{ker}\,(i_M\oplus i_N)$ such that $\alpha^M+x_Mk_M\sigma^M$ bounds in $M'$, $\alpha^N+x_Nk_N\sigma^N$ bounds in $N'$, and both elements get identified under the gluing diffeomorphism $\phi$. 
\end{lem}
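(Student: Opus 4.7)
The plan is to verify both directions by a direct calculation, using the adapted framings from Section \ref{subsect adapted framings} together with the explicit formula for $\phi_*$ on $H_1(\partial M')$ established in Lemma \ref{act phi hom 1}. The domain already constrains $\alpha$ to lie in $\ker(i_M\oplus i_N)$, so $i_M\alpha=0$ and $i_N\alpha=0$; the job is to show that the $S(X)$-condition $f=0$ encodes precisely the matching under $\phi$, while the bounding conditions are automatic.

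First I would check that both bounding conditions hold for free. Because the framing $\tau_M$ is adapted, the composition
\[
H_1(\partial M')\xrightarrow{\mu_M} H_1(M')\xrightarrow{s} H_1(M)\oplus\mathbb{Z}_{k_M}
\]
sends $\alpha^M+x_Mk_M\sigma^M$ to $(i_M\alpha,\,x_Mk_M\bmod k_M)=(0,0)$, using that $i_M\alpha=0$ and that $k_M$ divides $x_Mk_M$. Since $s$ is an isomorphism, $\mu_M(\alpha^M+x_Mk_M\sigma^M)=0$ in $H_1(M')$, so this curve bounds in $M'$; the analogous argument with $N$ in place of $M$ shows that $\alpha^N+x_Nk_N\sigma^N$ bounds in $N'$. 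These conditions therefore give no extra information beyond $\alpha\in\ker(i_M\oplus i_N)$.

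The remaining content is the identification condition under $\phi$. Writing $\alpha=\sum_{i=1}^{2g} c_i\gamma_i$ and using $a_i=\langle C,\gamma_i\rangle$, Lemma \ref{act phi hom 1} yields
\[
\phi_*(\alpha^M+x_Mk_M\sigma^M)=\sum_{i=1}^{2g}c_i(\gamma_i^N+a_i\sigma^N)-x_Mk_M\sigma^N=\alpha^N+(\langle C,\alpha\rangle-x_Mk_M)\sigma^N,
\]
since $\sum c_i a_i=\langle C,\alpha\rangle$ and $\phi_*\sigma^M=-\sigma^N$. The condition that $\phi_*$ identifies the two boundary curves, i.e.\ $\phi_*(\alpha^M+x_Mk_M\sigma^M)=\alpha^N+x_Nk_N\sigma^N$, is therefore equivalent to $\langle C,\alpha\rangle-x_Mk_M=x_Nk_N$, which is exactly $f(x_MB_M,x_NB_N,\alpha)=x_Mk_M+x_Nk_N-\langle C,\alpha\rangle=0$, the defining equation of $S(X)$.

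Since the whole argument reduces to these two direct computations, I do not anticipate any real obstacle; the only subtle point is keeping track of the sign $\phi_*\sigma^M=-\sigma^N$, which is what makes the $x_Mk_M$-contribution enter $f$ with the correct sign opposite to the $\langle C,\alpha\rangle$ term.
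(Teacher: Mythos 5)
Your proof is correct and takes essentially the same route as the paper's: both arguments rest on the adapted-framing criterion for when a class $\alpha^M+r\sigma^M$ is null-homologous in $M'$ (resp.\ $N'$) and on the formula for $\phi_*$ from Lemma \ref{act phi hom 1}, arriving at the identical equation $\langle C,\alpha\rangle-x_Mk_M=x_Nk_N$, i.e.\ $f=0$. Your only reorganization---noting that since $\alpha\in\mbox{ker}\,(i_M\oplus i_N)$ is imposed from the start, the two bounding conditions are automatic and all the content lies in the matching condition under $\phi$---is a harmless repackaging of the same computation.
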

\begin{proof} Let $\alpha\in\text{ker}\,(i_M\oplus i_N)$ and $x_Mk_M+x_Nk_N-\langle C,\alpha\rangle=0$. Since we are using adapted framings the parallel curves $\alpha^M$ and $\alpha^N$ are null-homologous in $M'$ and $N'$. Then $\alpha^M+x_Mk_M\sigma^M$ bounds in $M'$ and $\alpha^N+x_Nk_N\sigma^N$ bounds in $N'$. Under the gluing diffeomorphism $\phi$ the first curve maps to
\begin{equation*}
\alpha^N+\langle C,\alpha\rangle \sigma^N-x_Mk_M\sigma^N.
\end{equation*}
This is equal to the second curve by our assumption. This proves one direction of the claim. The converse follows similarly. 
\end{proof}
We can now prove:
\begin{thm}\label{ker psi2 term} The kernel of the homomorphism 
\begin{equation*}
\psi_2^*\colon H^2(M')\oplus H^2(N')\rightarrow H^2(\partial M')
\end{equation*}
is isomorphic to $S(X)\oplus P(M)_{A_M}\oplus P(N)_{A_N}$.
\end{thm}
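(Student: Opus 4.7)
The plan is to read off the kernel directly from the explicit description of $\psi_2^*$ given in the lemma immediately preceding the theorem. By that lemma, after identifying $H^2(M')\oplus H^2(N')$ with
\[
P(M)_{A_M}\oplus P(N)_{A_N}\oplus \mathbb{Z}B_M\oplus\mathbb{Z}B_N\oplus \ker i_M\oplus \ker i_N
\]
and $H^2(\partial M')$ with $\mathbb{Z}\oplus H_1(\Sigma)$ via the framing $\tau_M$, the homomorphism $\psi_2^*$ becomes
\[
(c_M,c_N,x_MB_M,x_NB_N,\alpha_M,\alpha_N)\longmapsto (x_Mk_M+x_Nk_N-\langle C,\alpha_N\rangle,\,\alpha_M-\alpha_N).
\]
Since the target depends only on the last four arguments, the summand $P(M)_{A_M}\oplus P(N)_{A_N}$ lies entirely in $\ker\psi_2^*$ and splits off as an orthogonal direct summand.

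It remains to analyse the restriction
\[
\overline{\psi_2^*}\colon \mathbb{Z}B_M\oplus\mathbb{Z}B_N\oplus \ker i_M\oplus \ker i_N\longrightarrow \mathbb{Z}\oplus H_1(\Sigma).
\]
Vanishing of the $H_1(\Sigma)$-component forces $\alpha_M=\alpha_N$, and then this common element $\alpha$ lies in $\ker i_M\cap \ker i_N=\ker(i_M\oplus i_N)$. With this identification the $\mathbb{Z}$-component reads $x_Mk_M+x_Nk_N-\langle C,\alpha\rangle=0$, which is precisely the defining equation $f(x_MB_M,x_NB_N,\alpha)=0$ of the group $S(X)$ from Definition \ref{defn general split classes}. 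Therefore $\ker\overline{\psi_2^*}=S(X)$, and combining with the previous paragraph gives
\[
\ker\psi_2^*\;\cong\;S(X)\oplus P(M)_{A_M}\oplus P(N)_{A_N},
\]
as asserted.

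There is essentially no obstacle beyond careful bookkeeping: the lemma preceding the theorem has already diagonalized $\psi_2^*$ with respect to the chosen splittings of $H^2(M')$ and $H^2(N')$, so the statement reduces to matching the kernel of an explicit $\mathbb{Z}$-linear map against the definition of $S(X)$. The only mildly subtle point is the identification $\ker i_M\cap \ker i_N=\ker(i_M\oplus i_N)$ that makes the $\alpha$-coordinate of a split class a single element in a common group, which is needed for the formula for $S(X)$ to make sense.
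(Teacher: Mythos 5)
Your proof is correct and follows essentially the same route as the paper: the paper's own argument likewise reads the kernel off the replaced form of $\psi_2^*$, noting that vanishing of the $H_1(\Sigma)$-component forces $\alpha_M=\alpha_N\in\mbox{ker}\,i_M\cap\mbox{ker}\,i_N=\mbox{ker}\,(i_M\oplus i_N)$, so that the $\mathbb{Z}$-component becomes exactly the defining equation of $S(X)$ while $P(M)_{A_M}\oplus P(N)_{A_N}$ splits off freely. Your write-up is simply a more explicit version of the paper's (very terse) proof, and the bookkeeping is accurate.
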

\begin{proof}
Elements in the kernel must satisfy $\alpha_M=\alpha_N$. In particular, both elements are in $\text{ker}\,i_M\cap \text{ker}\,i_N=\text{ker}\,(i_M\oplus i_N)$. Hence the kernel of the replaced $\psi_2^*$ is given by
\begin{equation*}
S(X)\oplus P(M)_{A_M}\oplus P(N)_{A_N}.
\end{equation*}
\end{proof}

\subsection{Calculation of $H^2(X)$}\label{subsec Calculation of H2(X)}
Using Theorem \ref{ker psi2 term} and Theorem \ref{isom coker R rim} we can now replace the terms in Lemma \ref{lem short exact seq for cal H^2X} to get the following theorem:
\begin{thm}\label{computation thm H2} There exists a short exact sequence
\begin{equation*}
0\rightarrow R(X)\oplus \mathbb{Z}\Sigma_X\rightarrow H^2(X;\mathbb{Z})\rightarrow S(X)\oplus P(M)_{A_M}\oplus P(N)_{A_N}\rightarrow 0.
\end{equation*}
\end{thm}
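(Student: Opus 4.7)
The plan is to read off the two outer terms of the short exact sequence
\begin{equation*}
0 \longrightarrow \mbox{coker}\,\psi_1^* \longrightarrow H^2(X) \longrightarrow \mbox{ker}\,\psi_2^* \longrightarrow 0
\end{equation*}
supplied by Proposition \ref{prop short exact seq for cal H^2X}, using the identifications that have already been established, and then substitute. The right-hand term is handled immediately by Theorem \ref{ker psi2 term}, which yields $\mbox{ker}\,\psi_2^* \cong S(X) \oplus P(M)_{A_M} \oplus P(N)_{A_N}$, so no further work is required there.

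For the left-hand term, equation \eqref{coker psi1 term} gives a splitting
\begin{equation*}
\mbox{coker}\,\psi_1^* \cong \mbox{coker}\,(i_M^* + i_N^*) \oplus \mathbb{Z}PD(\Sigma^M),
\end{equation*}
and Theorem \ref{isom coker R rim} identifies the first summand with $R(X)$ via the rim tori map $r_X$. It then remains to match the second summand $\mathbb{Z}PD(\Sigma^M)$, viewed as a subgroup of $H^2(X)$ through the Mayer-Vietoris coboundary $\delta\colon H^1(\partial M') \to H^2(X)$, with $\mathbb{Z}\Sigma_X$. Under Poincar\'e duality $H^1(\partial M') \cong H_2(\partial M')$ the generator $PD(\Sigma^M)$ corresponds to the push-off $\Sigma^M$; by naturality of Poincar\'e--Lefschetz duality with respect to the Mayer-Vietoris sequences for the decomposition $X = M' \cup N'$, the coboundary $\delta$ corresponds (up to the natural duality $H^2(X)\cong H_2(X)$) to the composition $H_2(\partial M') \to H_2(M') \to H_2(X)$, which by the very definition of $\Sigma_X$ in Section \ref{Sect Gompf sum general} sends $\Sigma^M$ to $\Sigma_X$. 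Hence $\delta(PD(\Sigma^M))$ generates the subgroup $\mathbb{Z}\Sigma_X \subset H^2(X)$, giving $\mbox{coker}\,\psi_1^* \cong R(X) \oplus \mathbb{Z}\Sigma_X$.

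Substituting both identifications into the short exact sequence from Proposition \ref{prop short exact seq for cal H^2X} yields the claim. The only mildly non-routine step is the naturality argument identifying $\delta(PD(\Sigma^M))$ with (the Poincar\'e dual of) $\Sigma_X$; everything else is pure bookkeeping from the results already assembled in the preceding subsections.
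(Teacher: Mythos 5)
Your proof is correct and takes essentially the same route as the paper, which likewise obtains the theorem by substituting Theorem \ref{ker psi2 term}, Theorem \ref{isom coker R rim} and equation \eqref{coker psi1 term} directly into the short exact sequence of Proposition \ref{prop short exact seq for cal H^2X}. The only difference is that you make explicit the duality argument identifying the image of $\mathbb{Z}PD(\Sigma^M)$ under the Mayer--Vietoris coboundary with $\mathbb{Z}\Sigma_X$, a step the paper leaves implicit under its stated convention of suppressing Poincar\'e duality.
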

Since we already calculated the rank of each group occuring in this short exact sequence, we can calculate the second Betti number of $X$: 
\begin{align*}
b_2(X)&=d+1+(d+1)+(b_2(M)-2)+(b_2(N)-2)\\
&= b_2(M)+b_2(N)-2+2d.
\end{align*}
This is the same number as in Corollary \ref{betti X}. 

In the following sections we will restrict to the case that $\Sigma_M, \Sigma_N$ represent indivisible classes, hence $k_M=k_N=1$, and the cohomologies of $M$, $N$ and $X$ are torsion-free, which is equivalent to $H^2$ or $H_1$ being torsion free. For the manifold $X$ this can be checked using the formula for $H_1(X)$ in Theorem \ref{H 1 for X} or Corollary \ref{cor H_1 n_MN=1}. Then the exact sequence in Theorem \ref{computation thm H2} reduces to
\begin{equation*}
0\rightarrow R(X)\oplus \mathbb{Z}\Sigma_X\rightarrow H^2(X)\rightarrow S(X)\oplus P(M)\oplus P(N)\rightarrow 0.
\end{equation*}
Since all groups are free abelian, the sequence splits and we get an isomorphism
\begin{equation*}
H^2(X)\cong P(M)\oplus P(N)\oplus S(X)\oplus R(X)\oplus \mathbb{Z}\Sigma_X.
\end{equation*}

\section{The intersection form of $X$}\label{section calc intersection form of X}

From now on until the end of this article we will assume that $\Sigma_M$ and $\Sigma_N$ represent indivisible classes and the cohomologies of $M$, $N$ and $X$ are torsion free.
The group of vanishing classes $S(X)$ contains the element
\begin{equation*}
B_X=B_M-B_N.
\end{equation*}
We want to prove that we can choose $d$ elements $S_1,\dotsc,S_d$ in $S(X)$, forming a basis for a subgroup $S'(X)$ such that $S(X)=\mathbb{Z}B_X\oplus S'(X)$, and a basis $R_1,\dotsc,R_d$ for the group of rim tori $R(X)$ such that the following holds:

\begin{thm}\label{formula H^2 no torsion} Let $X=M\#_{\Sigma_M=\Sigma_N}N$ be a generalized fibre sum of closed oriented 4-manifolds $M$ and $N$ along embedded surfaces $\Sigma_M,\Sigma_N$ of genus $g$ and self-intersection zero which represent indivisible homology classes. Suppose that the cohomology of $M$, $N$ and $X$ is torsion free. Then there exists a splitting
\begin{equation*}
H^2(X;\mathbb{Z})=P(M)\oplus P(N)\oplus (S'(X)\oplus R(X))\oplus (\mathbb{Z}B_X\oplus\mathbb{Z}\Sigma_X),
\end{equation*}
where 
\begin{equation*}
(S'(X)\oplus R(X))=(\mathbb{Z}S_1\oplus\mathbb{Z}R_1)\oplus\dotsc\oplus(\mathbb{Z}S_d\oplus\mathbb{Z}R_d).
\end{equation*}
The direct sums are all orthogonal, except the direct sums inside the brackets. In this decomposition of $H^2(X;\mathbb{Z})$, the restriction of the intersection form $Q_X$ to $P(M)$ and $P(N)$ is equal to the intersection form induced from $M$ and $N$ and has the structure
\begin{equation*}
\left(\begin{array}{cc} B_M^2+B_N^2 &1 \\ 1& 0 \\ \end{array}\right)
\end{equation*} 
on $\mathbb{Z}B_X\oplus\mathbb{Z}\Sigma_X$ and the structure   
\begin{equation*}
\left(\begin{array}{cc} S_i^2 &1 \\ 1& 0 \\ \end{array}\right)
\end{equation*}  
on each summand $\mathbb{Z}S_i\oplus\mathbb{Z}R_i$.
\end{thm}
\begin{defn} We call a basis for $H^2(X;\mathbb{Z})$ with these properties a {\em normal form basis} or {\em normal form decomposition}.
\end{defn}
A normal form basis is in general not unique. However, it will follow from Lemma \ref{rim tori depend only on alpha} that a basis for $R(X)$ is determined by a basis for $\text{ker}(i_M\oplus i_N)$, where 
\begin{equation*}
i_M\oplus i_N\colon H_1(\Sigma)\rightarrow H_1(M)\oplus H_1(N).
\end{equation*}
Fixing such a basis determines a basis of rim tori in each fibre sum as the gluing diffeomorphism changes. The choice of the basis for $S(X)$ is then determined by the basis for $R(X)$ in the following way:
\begin{lem}\label{lem S1,...,Sd unique} Let $S_1',\dotsc,S_d'$ be elements in $H^2(X;\mathbb{Z})$ such that 
\begin{equation*}
H^2(X;\mathbb{Z})=P(M)\oplus P(N)\oplus (S'(X)\oplus R(X))\oplus (\mathbb{Z}B_X\oplus\mathbb{Z}\Sigma_X),
\end{equation*}
with 
\begin{equation*}
(S'(X)\oplus R(X))=(\mathbb{Z}S_1'\oplus\mathbb{Z}R_1)\oplus\dotsc\oplus(\mathbb{Z}S_d'\oplus\mathbb{Z}R_d)
\end{equation*}
is another normal form decomposition. Then $S_i'$ differs from $S_i$ only by a rim torus for all indices $i$.
\end{lem}
\begin{proof}
Since the restriction of the intersection form to $P(M)$, $P(N)$ and $\mathbb{Z}B_X\oplus\mathbb{Z}\Sigma_X$ is non-degenerate it follows that
\begin{equation*}
S_i'=\sum_i\alpha_{ij}S_j+\sum_i\beta_{ij}R_j.
\end{equation*}
Since $S_i'\cdot R_k=\delta_{ik}$ it follows that $\alpha_{ij}=\delta_{ij}$. This implies
\begin{equation*}
S_i'=S_i+\sum_i\beta_{ij}R_j,
\end{equation*}
hence the claim.
\end{proof}
\begin{rem}
Note that the self-intersections of the vanishing classes can change if we add a rim torus. Therefore we cannot make an {\em a priori} statement about the numbers $S_i^2$. In fact, we can add to $S_i^2$ any even number in this way. In particular, whether the intersection form is even or odd does not depend on the choice of the surfaces $S_i$, as it should be.
\end{rem}
The normal form decomposition for the cohomology of $X$ given by Theorem \ref{formula H^2 no torsion}
\begin{equation*}
H^2(X)=P(M)\oplus P(N)\oplus (S'(X)\oplus R(X))\oplus (\mathbb{Z}B_X\oplus\mathbb{Z}\Sigma_X)
\end{equation*}
can be compared to the splitting
\begin{equation}\label{decomp of H^2 in A,B,P II}
H^2(M)=P(M)\oplus\mathbb{Z}B_M\oplus \mathbb{Z}\Sigma_M.
\end{equation}
from equation \eqref{decomp of H^2 in A,B,P}. In particular, we can decompose the perpendicular classes of $X$ as
\begin{equation*}
P(X)=P(M)\oplus P(N)\oplus (S'(X)\oplus R(X)).
\end{equation*}
\begin{cor}\label{general case embedding HM HN in HX}
Under the assumptions of Theorem \ref{formula H^2 no torsion} there exists a group monomorphism $H^2(M)\rightarrow H^2(X)$ given by mapping
\begin{equation*}
P(M)\stackrel{\text{Id}}{\rightarrow}P(M), B_M\mapsto B_X,\Sigma_M\mapsto\Sigma_X.
\end{equation*}
There exists a similar monomorphism $H^2(N)\rightarrow H^2(X)$ given by
\begin{equation*}
P(N)\stackrel{\text{Id}}{\rightarrow}P(N), B_N\mapsto B_X,\Sigma_N\mapsto\Sigma_X',
\end{equation*}
where $\Sigma_X'=\Sigma_X+R_C$. The cohomology groups $H^2(M)$ and $H^2(N)$ can be realized as direct summands of $H^2(X)$. In general, the embeddings do not preserve the intersection form, because $B_X^2=B_M^2+B_N^2$. The images of both embeddings have non-trivial intersection and in general do not span $H^2(X)$ because of the rim tori and vanishing classes. 
\end{cor}
Theorem \ref{formula H^2 no torsion} is proved by constructing surfaces representing the basis $S_1,\dotsc,S_d$ of vanishing classes. The construction of these basis elements is rather lengthy and will be done step by step. First choose a basis $\alpha_1,\dotsc,\alpha_d$ for the free abelian group 
\begin{equation*}
\text{ker}(i_M\oplus i_N\colon H_1(\Sigma)\rightarrow H_1(M)\oplus H_1(N)).
\end{equation*}
\begin{lem}\label{ker iMiN direct summand} The subgroup $\text{ker}(i_M\oplus i_N)$ is a direct summand of $H_1(\Sigma)$.
\end{lem}
\begin{proof} Suppose that $\alpha\in \text{ker}(i_M\oplus i_N)$ is divisible by an integer $c>1$ so that $\alpha=c\alpha'$ with $\alpha'\in H_1(\Sigma)$. Then $ci_M\alpha'=0=ci_N\alpha'$. Since $H_1(M)$ and $H_1(N)$ are torsion free this implies that $\alpha'\in \text{ker}(i_M\oplus i_N)$. Hence $\text{ker}(i_M\oplus i_N)$ is a direct summand by the following fundamental lemma on free abelian groups (for a proof see e.g.~\cite[Lemma 6.15]{BC}).
\end{proof}
\begin{lem}\label{fund lemma free abelian} Let $G$ be a finitely generated free abelian group and $H\subset G$ a subgroup. Then there exists a basis $e_1,\ldots,e_n$ of $G$ such that $H$ is generated by the elements $a_1e_1,\ldots,a_ne_n$ for certain integers $a_1,\ldots,a_n$.
\end{lem}
It follows that the homology classes $\alpha_i$ are indivisible, hence we can represent them by embedded closed curves in $\Sigma$, see \cite{Mey}. We will often use the following lemma.
\begin{lem}
Let $\gamma$ be a possibly disconnected embedded closed curve on $M'$ such that $\mu_M\gamma$ is null-homologous in $M'$. Then $\gamma$ bounds an embedded oriented surface in $M'$ that is transverse to the boundary $\partial M'$.
\end{lem}
This surface can be thought of as a spanning surface in the 4-manifold $M'$ for the curve $\gamma$. Let $B_M'$ and $B_N'$ denote the surfaces in $M'$ and $N'$ bounding the meridians, obtained from $B_M$ and $B_N$ by deleting a disk. The class $B_X$ is sewed together from these surfaces along the meridians in $\partial M'$ and $\partial N'$. We also want to realize the homology classes $S_i$ by embedded surfaces in $X$. In general, surfaces that bound curves on the boundary $\partial M'$ are always oriented as follows:
\begin{defn}\label{defn orientation of surfaces bounding M'} Suppose that $L_M$ is a connected, orientable surface in $M'$ which is transverse to the boundary $\partial M'$ and bounds an oriented curve $c_M$ on $\partial M'$. In a collar of the boundary of the form $\Sigma\times S^1\times I$, we can assume that the surface $L_M$ has the form $c_M\times I$. The orientation of $L_M$ is then chosen such that it induces on $c_M\times I$ the orientation of $c_M$ followed by the orientation of the intervall $I$ pointing out of $M'$.
\end{defn}
This definition applies in particular to the surfaces $B_M'$ and $B_N'$ bounding the meridians in $M'$ and $N'$: The surfaces $\Sigma_M$ and $\Sigma_N$ are oriented by the embeddings $i_M,i_N$ from a fixed oriented surface $\Sigma$. The surfaces $B_M$ and $B_N$ are oriented such that $\Sigma_MB_M=+1$ and $\Sigma_NB_N=+1$. 
\begin{lem} The restriction of the orientation of $B_M$ to the punctured surface $B_M'$ is equal to the orientation determined by Definition \ref{defn orientation of surfaces bounding M'}.
\end{lem}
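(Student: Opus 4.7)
The plan is to verify the lemma by a local computation at each meridian component of $\partial B_M'$. The key preliminary reduction is to put $B_M$ into a standard normal form: since $B_M$ meets $\Sigma_M$ transversely in $k_M$ positive points (by the construction of $B_M$ preceding the definition of $P(M)$), and since $B_M'$ is obtained from $B_M$ by deleting a small disc at each intersection point, I may assume that near each intersection point $q_j$ the intersection $B_M\cap\nu\Sigma_M$ is precisely the fibre $\{q_j\}\times D^2$ of the framing $\tau_M$, oriented by the standard $dx\wedge dy$ orientation of $D^2$. This reduces the question to a purely $2$-dimensional check in a single disc fibre.

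With this normal form in place I would compute and compare the two orientations on a neighbourhood of one boundary circle of $B_M'$. First, the restriction from $B_M$: the convention fixed at the start of Section \ref{Sect Gompf sum general} orients $M$ along $\nu\Sigma_M$ as $\Sigma\wedge D^2$, and the positivity $\Sigma_M\cdot B_M=+1$ at each $q_j$ forces the orientation of $B_M$ near $q_j$ to coincide with the standard orientation of the $D^2$ factor, i.e.\ with $\partial_r\wedge\partial_\theta$ in polar coordinates. Second, the orientation coming from Definition \ref{defn orientation of surfaces bounding M'}: the bounding curve is $\sigma^M=\{q_j\}\times S^1_{1/\sqrt{2}}$, oriented counterclockwise by $\partial_\theta$ by the choice of orientation on $\Sigma\times S^1$; the outward normal to $M'$ along $\partial M'$ is the $-\partial_r$ direction (pointing toward $\Sigma_M$ in the fibre), by the convention that the boundary orientation on $\partial M'$ followed by the outward normal recovers the orientation of $M$. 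Putting these together, the definition's orientation is $\partial_\theta\wedge(-\partial_r)=\partial_r\wedge\partial_\theta$, which matches the restriction from $B_M$.

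I do not foresee any substantive obstacle; the argument is essentially bookkeeping of orientation conventions already fixed in the paper, and once the local normal form is in hand the comparison is an identity of $2$-frames. The only delicate point is making sure the "outward" normal in Definition \ref{defn orientation of surfaces bounding M'} is correctly interpreted as the direction from $M'$ into $\nu\Sigma_M$ (the decreasing-$r$ direction in the fibre), so that it is consistent with the conventions on $\partial M'$ introduced in Section \ref{Sect Gompf sum general}; with that matched the two prescriptions agree on every component of $\partial B_M'$, hence globally on $B_M'$.
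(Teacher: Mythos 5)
Your proof is correct and takes essentially the same route as the paper: the paper's two-line argument also works on the disk fibre, noting that positivity of $\Sigma_M\cdot B_M$ gives the restricted orientation ``direction into $M'$ followed by $\sigma^M$'' and that this equals ``$\sigma^M$ followed by the direction out of $M'$'', which is exactly the orientation of Definition \ref{defn orientation of surfaces bounding M'}. Your normal-form reduction and the polar-coordinate identity $\partial_\theta\wedge(-\partial_r)=\partial_r\wedge\partial_\theta$ simply make the same orientation bookkeeping explicit.
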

\begin{proof} On the 2-disk $D$ in $B_M$, obtained by the intersection with the tubular neigbourhood $\nu\Sigma_M$, the restriction of the orientation is given by the direction pointing into $M'$ followed by the orientation of the meridian $\sigma^M$. This is equal to the orientation of $\sigma^M$ followed by the direction pointing out of $M'$.
\end{proof}

The extension $\Phi$ of the gluing diffeomorphism $\phi$ (see equation \eqref{eq gluing diff Phi}) inverts on the 2-disk $D$ the inside-outside direction and the direction along the boundary $\partial D$. Hence the oriented surfaces $B_M'$ and $B_N'$ sew together to define an oriented surface $B_X$ in $X$. 

\subsection{Construction of the surfaces $D_i^M$}

\begin{defn} For the basis $\alpha_1,\dotsc,\alpha_d$ of $\text{ker}(i_M\oplus i_N)$ let $\alpha_i^M$ and $\alpha_i^N$ denote embedded curves on the push-offs $\Sigma^M$ and $\Sigma^N$ in $\partial\nu\Sigma_M$ and $\partial\nu\Sigma_N$ which are parallel to the curves $i_M\alpha_i$ and $i_N\alpha_i$ on $\Sigma_M$ and $\Sigma_N$ under the given trivialization of the tubular neighbourhoods. The curves $\alpha_i^M$ and $\alpha_i^N$ are null-homologous in $M'$ and $N'$ by Lemma \ref{parallel curve adapted}. We fix oriented embedded surfaces $D_i^M$ in $M'$ transverse to the boundary $\partial M'$ and bounding the curves $\alpha_i^M$. Similarly we fix embedded surfaces $D_i^N$ in $N'$ bounding the curves $\alpha_i^N$.
\end{defn} 
We want to determine the intersection numbers of the surfaces $D_i^M$ with the rim tori in $M'$. Let $\gamma_1,\dotsc,\gamma_{2g}$ denote the basis of $H_1(\Sigma)$. We defined a basis for $H_2(\Sigma\times S^1)$ given by the elements $\Gamma_i=PD(\gamma_i^*)$. On the 3-manifold $\Sigma\times S^1$ we have 
\begin{equation*}
\Gamma_i\cdot\gamma_j=\delta_{ij}.
\end{equation*}
More generally, suppose that a class $T$ represents the element $\sum_{i=1}^{2g}c_i\gamma_i^*$. Then $PD(T)=\sum_{i=1}^{2g}c_i\Gamma_i$ and
\begin{equation*}
PD(T)\cdot \gamma_j=\langle T, \gamma_j\rangle = c_j.
\end{equation*}
These relations also hold on $\partial M'$ and $\partial N'$. Consider the closed, oriented curve $\gamma_j$ on $\Sigma$. We view $\gamma_j$ as a curve on the push-off $\Sigma^M$ in $M'$. It defines a small annulus $\gamma_j\times I$ in a collar of the form $\Sigma\times S^1\times I$ of the boundary $\partial M'$. On the annulus we choose the orientation given by Definition \ref{defn orientation of surfaces bounding M'}. Then the intersection number of $\Gamma_i^M$ and $\gamma_j\times I$ in the manifold $M'$ is given by
\begin{equation*}
\Gamma_i^M\cdot (\gamma_j\times I)=\delta_{ij}
\end{equation*}
according to the orientation convention for $M'$, cf.~Section \ref{Sect Gompf sum general}.

More generally, suppose that 
\begin{equation*}
e=\sum_{i=1}^{2g}e_i\gamma_i
\end{equation*}
is an oriented curve on $\Sigma$ and $E_M$ the annulus $E_M=e\times I$ defined by $e$. Let $R_T^M$ be a rim torus in $M'$ induced from an element $T\in H^1(\Sigma)$. Then $R_T^M$ is the image of
\begin{equation*}
\sum_{j=1}^{2g}\langle T, \gamma_j\rangle \Gamma_j^M
\end{equation*}
under the inclusion of $\partial M'$ in $M'$. We then have
\begin{align*}
R_T^M\cdot E_M &= \sum_{i=1}^{2g}\langle T, \gamma_j\rangle e_j\\\
&= \langle T, e\rangle.
\end{align*}
\begin{lem}\label{lem intersect rim TM ann EM} With our orientation conventions, the algebraic intersection number of a rim torus $R_T^M$ and an annulus $E_M$ as above is given by $R_T^M\cdot E_M=\langle T, e\rangle$.
\end{lem}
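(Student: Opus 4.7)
The plan is to reduce the statement to the key local computation $\Gamma_i^M \cdot (\gamma_j \times I) = \delta_{ij}$ in $M'$ and then extend bilinearly, which is essentially what the paragraph preceding the lemma already sets up.

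First I would expand both sides in the chosen bases. Write $T = \sum_{i=1}^{2g} c_i \gamma_i^*$ with $c_i = \langle T, \gamma_i\rangle$, and $e = \sum_{j=1}^{2g} e_j \gamma_j$. By the definition of the rim tori map $r_M = \mu_M \circ PD \circ p_M^*$ (Definition \ref{def rim tori 1}) and the identification \eqref{rim torus for elements H^1}, the rim torus $R_T^M$ is the image under $\mu_M$ of the class $\sum_i c_i \Gamma_i^M \in H_2(\partial M')$. The annulus $E_M = e \times I \subset \Sigma \times S^1 \times I$ decomposes as $\sum_j e_j (\gamma_j \times I)$ at the level of homology classes (relative to the boundary). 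Thus by bilinearity of the intersection pairing in $M'$,
\begin{equation*}
R_T^M \cdot E_M \;=\; \sum_{i,j} c_i e_j \,\bigl(\Gamma_i^M \cdot (\gamma_j \times I)\bigr).
\end{equation*}

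Next I would justify the core equality $\Gamma_i^M \cdot (\gamma_j \times I) = \delta_{ij}$ in $M'$. Working in a collar $\Sigma \times S^1 \times I$ of $\partial M'$, one can push $\Gamma_i^M$ slightly into the interior so that it meets $\gamma_j \times I$ transversely at points lying over $\Gamma_i \cap \gamma_j \subset \Sigma \times S^1$. Since $\Gamma_i = PD(\gamma_i^*)$ satisfies $\Gamma_i \cdot \gamma_j = \delta_{ij}$ in $\Sigma \times S^1$, we get the right count of intersection points; it remains to verify the sign. Once these are granted, substituting back gives
\begin{equation*}
R_T^M \cdot E_M = \sum_i c_i e_i = \langle T, e\rangle,
\end{equation*}
which is the claim.

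The hard part is the sign bookkeeping. One must check that the orientation on $\Gamma_i^M = \tau_{M*}\Gamma_i$ inherited from $\partial M'$ (with its boundary orientation fixed in Section \ref{Sect Gompf sum general}), the orientation on the annulus $\gamma_j \times I$ fixed by Definition \ref{defn orientation of surfaces bounding M'} (orientation of $\gamma_j$ followed by the outward normal direction of $M'$), and the ambient orientation of $M'$ combine so that a positive intersection in $\Sigma \times S^1$ of $\Gamma_i$ with $\gamma_j$ contributes $+1$ to $\Gamma_i^M \cdot (\gamma_j \times I)$ in $M'$, rather than $-1$. This follows from the convention that the orientation on $\partial M'$ followed by the outward normal equals the orientation on $M$, which pairs correctly with the outward pointing direction of $I$ built into the annulus orientation, so that the local frame at an intersection point matches the ambient orientation of $M'$. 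Once this sign is verified in one explicit case (for instance $\Gamma_1^M \cdot (\gamma_1 \times I)$), the same argument applies uniformly, and the proof is complete.
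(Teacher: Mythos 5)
Your proposal is correct and follows essentially the same route as the paper: the paragraph preceding the lemma proves it exactly by expanding $T$ and $e$ in the dual bases, reducing to the local computation $\Gamma_i^M\cdot(\gamma_j\times I)=\delta_{ij}$ in the collar, and summing bilinearly to get $\langle T,e\rangle$. Your explicit verification of the sign (frame of $\Gamma_i$, direction of $\gamma_j$, outward normal matching the orientation of $M'$ via the boundary-orientation convention of Section \ref{Sect Gompf sum general}) is in fact slightly more careful than the paper, which simply asserts the sign ``according to the orientation convention for $M'$.''
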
 
In particular we get:
\begin{lem}\label{lem intersect rim TM DiM} Let $R_T^M$ be the rim torus associated to an element $T$ in $H^1(\Sigma)$. Then $R_T^M\cdot D_i^M=\langle T, \alpha_i\rangle$.
\end{lem}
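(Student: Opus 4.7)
The plan is to reduce Lemma \ref{lem intersect rim TM DiM} directly to the preceding Lemma \ref{lem intersect rim TM ann EM} by pushing the entire intersection into a collar of the boundary $\partial M'$.

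First, I would observe that the rim torus $R_T^M = \mu_M \circ PD \circ p_M^*(T)$ is, by construction, the image of a homology class living on $\partial M'$. Hence it can be represented by a torus sitting inside any given collar neighbourhood of $\partial M'$ in $M'$; concretely, inside a thin sub-collar $\Sigma \times S^1 \times [0,\varepsilon]$ where the interval coordinate points into $M'$.

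Next, I would arrange $D_i^M$ so that in a collar $\Sigma \times S^1 \times I$ of $\partial M'$ it has the standard product form on its boundary piece. By construction, $\partial D_i^M = \alpha_i^M$ lies on a fixed push-off $\Sigma \times \{q\} \subset \Sigma \times S^1 = \partial M'$. Using the transversality of $D_i^M$ to $\partial M'$ and an ambient isotopy supported in a collar, I can assume that in the sub-collar $\Sigma \times S^1 \times [0,\varepsilon]$ the surface $D_i^M$ coincides with the annulus $E_M := \alpha_i \times \{q\} \times [0,\varepsilon]$, which is precisely a surface of the type considered in Lemma \ref{lem intersect rim TM ann EM} with $e = \alpha_i$.

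Finally, since $R_T^M$ lies entirely in this sub-collar and $D_i^M$ restricted to the sub-collar is $E_M$, every intersection point of $R_T^M$ with $D_i^M$ occurs inside the collar. The orientation conventions match automatically: both $D_i^M$ and $E_M$ inherit their orientations from Definition \ref{defn orientation of surfaces bounding M'} (boundary orientation followed by the outward normal direction). Therefore
\begin{equation*}
R_T^M \cdot D_i^M \;=\; R_T^M \cdot E_M \;=\; \langle T,\alpha_i\rangle,
\end{equation*}
where the second equality is the content of Lemma \ref{lem intersect rim TM ann EM}. The only conceivable obstacle is the orientation bookkeeping when collapsing $D_i^M$ onto the collar annulus, but since both surfaces are oriented by the same convention with respect to the common boundary curve $\alpha_i$, no sign discrepancy arises.
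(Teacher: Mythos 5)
Your proposal is correct and follows exactly the route the paper intends: the paper states this lemma as an immediate particular case of Lemma \ref{lem intersect rim TM ann EM} (``In particular we get:''), relying on the fact that by Definition \ref{defn orientation of surfaces bounding M'} the surface $D_i^M$ has the product form $\alpha_i^M\times I$ in a collar, while the rim torus, living on $\partial M'$, can be pushed into that collar. Your write-up merely makes explicit the collar isotopy and the orientation bookkeeping that the paper leaves implicit, with no difference in substance.
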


\subsection{Construction of the surfaces $U_i^N$}

Let $V_i^N$ denote the embedded surface in the closed manifold $N$, obtained by smoothing the intersection points of $x_N(\alpha_i)$ copies of $B_N$ and $x_N(\alpha_i)B_N^2$ disjoint copies of the push-off $\Sigma^N$ with the opposite orientation. Here
\begin{equation*}
x_N(\alpha_i)=\langle C,\alpha_i\rangle
\end{equation*}
as before. The surface $V_i^N$ represents the class $x_N(\alpha_i)B_N-(x_N(\alpha_i)B_N^2)\Sigma_N$ and has zero intersection with the class $B_N$ and the surfaces in $P(N)$. We can assume that the copies of $\Sigma^N$ are contained in the complement of the tubular neighbourhood $\nu\Sigma_N$. Deleting the interior of the tubular neigbourhood we obtain a surface $U_i^N$ in $N'$ bounding the disjoint union of $x_N(\alpha_i)$ parallel copies of the meridian $\sigma^N$ on the boundary $\partial N'$. The surface has zero intersection with the surface $B_N'$ and the rim tori in $R(N')$. In addition, the intersection number with the surface $\Sigma^N$ is given by $U_i^N\cdot \Sigma^N=x_N(\alpha_i)$.

\subsection{Definition of vanishing surfaces $S_i$}\label{subsection defn vanishing surfaces Si}

Suppose that the gluing diffeomorphism $\phi$ is isotopic to the trivial diffeomorphism that identifies the push-offs, or equivalently the cohomology class $C$ equals zero. We can then assume without loss of generality that $\phi$ is equal to the trivial diffeomorphism, hence the curves $\alpha_i^M$ and $\alpha_i^N$ on the boundaries $\partial M'$ and $\partial N'$ get identified by the gluing map. We set
\begin{align*}
S_i^M&=D_i^M\\
S_i^N&=D_i^N
\end{align*}
and define the vanishing class $S_i$ by sewing together the surfaces $S_i^M$ and $S_i^N$.

In the case that $C$ is different from zero the curves $\phi\circ\alpha_i^M$ and $\alpha_i^N$ do not coincide. By Lemma \ref{act phi hom 1} we know that the curve $\phi\circ\alpha_i^M$ is homologous to
\begin{equation*}
\alpha_i^N+\langle C,\alpha_i \rangle\sigma^N=\alpha_i^N+x_N(\alpha_i)\sigma^N.
\end{equation*}
Let $\nu\bar{\Sigma}_N$ be a tubular neighbourhood of slightly larger radius than $\nu\Sigma_N$ (we indicate curves and surfaces bounding curves on $\nu\bar{\Sigma}_N$ by a bar). On the boundary of $\nu\bar{\Sigma}_N$ we consider the curve $\bar{\alpha}_i^N$ and $x_N(\alpha_i)$ parallel copies of the meridian $\bar{\sigma}^N$, disjoint from $\bar{\alpha}_i^N$. The curve $\bar{\alpha}_i^N$ bounds a surface $\bar{D}_i^N$ as before and the parallel copies of the meridian bound the surface $\bar{U}_i^N$. We can connect the curve $\phi\circ\alpha_i^M$ on $\partial\nu\Sigma_N$ to the disjoint union of the meridians and the curve $\bar{\alpha}_i^N$ on $\partial\nu\bar{\Sigma}_N$ by an embedded connected oriented surface $Q_i^N$ realizing the homology. 
\begin{lem}\label{lem QiN disjoint from annulus}  We can assume that the homology $Q_i^N$ is disjoint from an annulus of the form $\{*\}\times S^1\times I$ in the submanifold $\Sigma_N\times S^1\times I$ bounded by $\partial \nu\Sigma_N$ and $\partial\nu\bar{\Sigma}_N$.
\end{lem}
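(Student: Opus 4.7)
The plan is to choose the basepoint $*\in\Sigma_N$ generically and then realize the relative homology class of $Q_i^N$ in the complement of the annulus from the start, rather than trying to remove intersections of a given $Q_i^N$ with $A=\{*\}\times S^1\times I$ one at a time.

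First I would pick $*\in\Sigma_N$ to avoid the finitely many loops $i_N(\alpha_i)\subset\Sigma_N$ together with the finitely many points of $\Sigma_N$ over which the parallel copies of ${\sigma^N}'$ are placed. Since $\Sigma_N$ is $2$-dimensional and we only need to avoid a finite union of $1$- and $0$-dimensional subsets, such a choice exists. With this choice, the boundary $\partial A$ (which consists of two meridian circles $\{*\}\times S^1$, one in each of $\partial\nu\Sigma_N$ and $\partial\nu\Sigma_N'$) is disjoint from all the boundary curves bounded by $Q_i^N$, namely $\phi\circ\alpha_i^M$ on $\partial\nu\Sigma_N$ and ${\alpha_i^N}'$ together with the $x_N(\alpha_i)$ parallel copies of ${\sigma^N}'$ on $\partial\nu\Sigma_N'$.

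Next I would observe that the complement $W\setminus A=(\Sigma_N\setminus\{*\})\times S^1\times I$ deformation retracts onto $(\Sigma_N\setminus\{*\})\times S^1$, and that the inclusion $\Sigma_N\setminus\{*\}\hookrightarrow\Sigma_N$ induces an isomorphism on $H_1$, because deleting a single point from a closed orientable surface amounts to removing the interior of a $2$-cell in a CW-decomposition and hence does not change $H_1$. By the K\"unneth formula the inclusion $W\setminus A\hookrightarrow W$ is therefore an $H_1$-isomorphism. By Lemma \ref{act phi hom 1} the $1$-cycle $\phi\circ\alpha_i^M-{\alpha_i^N}'-x_N(\alpha_i){\sigma^N}'$ is null-homologous in $W$, so it is also null-homologous in $W\setminus A$, and thus bounds a (possibly singular) oriented $2$-chain entirely contained in $W\setminus A$.

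Finally, this $2$-chain can be promoted to an embedded, connected, oriented surface disjoint from $A$ by standard $4$-manifold techniques: the boundary is already embedded and disjoint from $A$; double points of an immersed representative can be smoothed at the cost of increasing genus, and disjoint components can be joined by tubes within the $4$-manifold $W\setminus A$. The resulting surface replaces $Q_i^N$ and has the desired disjointness property. I do not expect a real obstacle; the only care needed is in the generic choice of $*$, since the homological input is elementary.
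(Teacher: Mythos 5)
Your proposal is correct and follows essentially the same route as the paper: the paper's entire proof is the observation that $H_1(\Sigma^*)\rightarrow H_1(\Sigma)$ is an isomorphism and hence so is $H_1(\Sigma^*\times S^1)\rightarrow H_1(\Sigma\times S^1)$, which is exactly your key step. Your additions (generic choice of $*$ avoiding the projections of the boundary curves, the deformation retraction and K\"unneth argument, and the standard smoothing/tubing to promote the $2$-chain to an embedded connected surface) are just the routine details the paper leaves implicit.
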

\begin{proof} This follows because the isomorphism $H_1(\Sigma^*)\rightarrow H_1(\Sigma)$ induces an isomorphism $H_1(\Sigma^*\times S^1)\rightarrow H_1(\Sigma\times S^1)$, where $\Sigma^*$ denotes the punctured surface $\Sigma\setminus\{*\}$.
\end{proof}
We set
\begin{align*}
S_i^M&=D_i^M\\
S_i^N&=Q_i^N\cup \bar{U}_i^N\cup \bar{D}_i^N.
\end{align*}
The connected surfaces $S_i^M$ and $S_i^N$ sew together to define the vanishing surfaces $S_i$. The orientation of $S_i$ is defined as follows: The surfaces $S_i^M$ and $S_i^N$ are oriented according to Definition \ref{defn orientation of surfaces bounding M'}. The orientation of $I$ is reversed by the extension $\Phi$ of the gluing diffeomorphism, while the orientations of the curves $\alpha_i^M$ and $\phi\circ\alpha_i^M$ are identified. This implies that the surface $S_i^M$ with its given orientation and the surface $S_i^N$ with the {\em opposite} orientation sew together to define an oriented surface $S_i$ in $X$. 

\begin{prop} For arbitrary gluing diffeomorphism $\phi$, the vanishing classes $S_i$ satisfy $S_i\cdot \Sigma_X=0$. We also have $B_X\cdot\Sigma_X=1$ and $B_X^2=B_M^2+B_N^2$.
\end{prop}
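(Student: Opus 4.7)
The plan is to compute each of the four pairings by splitting $X = M' \cup_{\phi} N'$ and representing every class by transverse pieces in $M'$ and $N'$ whose boundary curves on $\partial M' = \partial N'$ match under $\phi$. The total intersection number in $X$ is then the sum of the intersections within each side, and the defining properties of $D_i^M$, $D_i^N$, $U_i^N$, $B_M$, $B_N$ from the previous subsections will give the claimed values directly.

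First I would handle $S_i \cdot \Sigma_X$ and $B_X \cdot \Sigma_X$ together. Push $\Sigma_X$ slightly off $\partial M'$ into the interior of $M'$; since only the $M'$-pieces $S_i^M = D_i^M$ and $B_M'$ of $S_i$ and $B_X$ can meet such a representative, we get $S_i \cdot \Sigma_X = D_i^M \cdot \Sigma^M = 0$ by Lemma \ref{lem2 D_i^M}, while $B_X \cdot \Sigma_X = B_M' \cdot \Sigma^M$ is the single transverse intersection inherited from $B_M \cdot \Sigma_M = 1$ (the disk cut out of $B_M$ to form $B_M'$ lies inside $\nu\Sigma_M$ and so does not meet a push-off that has been moved into the interior of $M'$).

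For $S_i \cdot B_X$ I would represent $B_X$ by $B_M' \cup B_N'$ with matching meridian boundaries, so that by generic transversality across the gluing
\begin{equation*}
S_i \cdot B_X = S_i^M \cdot B_M' + S_i^N \cdot B_N'.
\end{equation*}
The $M'$-summand is zero by Lemma \ref{lem2 D_i^M}. On the $N'$-side, I would decompose $S_i^N = Q_i^N \cup {U_i^N}' \cup {D_i^N}'$ and handle the three pieces separately. The piece ${D_i^N}'$ pairs trivially with $B_N'$ by the $N$-analogue of Lemma \ref{lem2 D_i^M}. The piece ${U_i^N}'$ pairs trivially with $B_N'$ because $V_i^N$ was constructed so that $V_i^N \cdot B_N = 0$ in $N$; after a small perturbation the intersection points of $V_i^N$ with $B_N$ lie outside $\nu\Sigma_N'$ and survive the removal of that neighbourhood. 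Finally, to kill $Q_i^N \cdot B_N'$ I would invoke Lemma \ref{lem QiN disjoint from annulus} to push $Q_i^N$ off the annulus $\{*\} \times S^1 \times I$ in the collar $\Sigma_N \times S^1 \times I$, and arrange $B_N'$ to agree with this annulus inside the collar.

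For $B_X^2$ I would pick a parallel copy $B_X^\natural$ of $B_X$ via a normal framing, realised by parallel copies $B_M^\natural$ in $M$ and $B_N^\natural$ in $N$ that meet $\Sigma_M$ and $\Sigma_N$ in nearby but distinct points; the resulting parallel meridians on $\partial M'$ and $\partial N'$ are still matched by $\phi$, so $B_M^{\natural\prime}$ and $B_N^{\natural\prime}$ glue to a parallel $B_X^\natural$. After a small perturbation the transverse intersections of $B_M$ with $B_M^\natural$ in $M$ lie away from $\Sigma_M$, so removing $\nu\Sigma_M$ preserves them and $B_M' \cdot B_M^{\natural\prime} = B_M^2$; analogously $B_N' \cdot B_N^{\natural\prime} = B_N^2$. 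The parallel meridians on the boundary are disjoint, contributing no boundary intersection, so $B_X^2 = B_M^2 + B_N^2$. The main obstacle I anticipate is the collar computation for $Q_i^N \cdot B_N' = 0$: a careful choice of representative of $B_N'$ inside the thin collar is needed so that Lemma \ref{lem QiN disjoint from annulus} actually makes the intersection vanish geometrically rather than only algebraically. The remaining computations are essentially transversality bookkeeping.
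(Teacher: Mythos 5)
Your proposal is correct and takes essentially the same approach as the paper: the paper likewise reduces everything to the properties established when constructing $D_i^M$, $D_i^N$, $U_i^N$, $Q_i^N$ and $B_M'$, $B_N'$, explicitly checking only $S_i\cdot\Sigma_X=D_i^M\cdot\Sigma^M=0$ (together with an optional $N$-side cross-check via $\Sigma_X=\Sigma_X'-R_C$) and treating $S_i\cdot B_X=0$, $B_X\cdot\Sigma_X=1$ and $B_X^2=B_M^2+B_N^2$ as immediate from those construction lemmas --- which is exactly the transversality bookkeeping you spell out. One cosmetic remark: since $S_i$ is sewn from $S_i^M$ and $S_i^N$ with the orientation of $S_i^N$ reversed, your formula $S_i\cdot B_X=S_i^M\cdot B_M'+S_i^N\cdot B_N'$ should carry a sign on the second summand, but as both terms vanish this does not affect the argument.
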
\begin{proof} We only have to check the first equation. This can be done on either the $M$ or the $N$ side: On the $M$ side we have we can assume that $\Sigma^M$ is given by the push-off $\Sigma_M\times\{p\}$ in the boundary $\Sigma_M\times S^1$ and that the curves $\alpha_i^M$ are embedded in a different push-off $\Sigma_M\times\{q\}$ with $p\neq q$. This implies that $D_i^M$ is disjoint from $\Sigma^M$, hence
\begin{equation*}
S_i\cdot \Sigma_X=D_i^M\cdot \Sigma^M=0.
\end{equation*}
On the $N$ side we have
\begin{align*}
S_i\cdot\Sigma_X&=S_i\cdot(\Sigma_X'-R_C)=-S_i^N\cdot(\Sigma_N-R_C^N)\\
&=-U_i^N\cdot\Sigma_N+D_i^N\cdot R_C^N =-x_N(\alpha_i)+\langle C,\alpha_i\rangle=0.
\end{align*}
\end{proof}
The rim torus $R_T^M$ associated to an element $T$ in $H^1(\Sigma)$ induces under the inclusion $M'\rightarrow X$ a rim torus in $X$, denoted by $R_T$. It has zero intersection with the surfaces $\Sigma_X$, $B_X$ and any other rim torus. The intersection with a vanishing class $S_i$ is given by $R_T\cdot S_i=\langle T,\alpha_i\rangle$ according to Lemma \ref{lem intersect rim TM ann EM}.

\subsection{Normal form basis for the intersection form}
In the construction of the vanishing classes $S_i$ from the surfaces $S_i^M$ and $S_i^N$ we can assume that the curves $\alpha_i^M$ are contained in pairwise disjoint parallel copies of the push-off $\Sigma^M$ in $\partial\nu\Sigma^M$ of the form $\Sigma_M\times\{p_i\}$. Hence the vanishing classes $S_i$ in $X$ do not intersect on $\partial M'=\partial N'$ and we can assume that they have only transverse intersections. 

We now simplify the intersection form on $S'(X)\oplus R(X)$. According Lemma \ref{ker iMiN direct summand} we can complete the basis $\alpha_1,\dotsc,\alpha_d$ for $\text{ker}\,(i_M\oplus i_N)$ by certain elements $\beta_{d+1},\dotsc,\beta_{2g}\in H_1(\Sigma)$ to a basis of $H_1(\Sigma)$. Let 
\begin{equation*}
\alpha_1^*,\dotsc,\alpha_d^*,\beta_{d+1}^*,\dotsc,\beta_{2g}^*
\end{equation*}
denote the dual basis of $H^1(\Sigma)$ and $R_1,\dotsc,R_{2g}$ the corresponding rim tori in $H^2(X)$. Then
\begin{align*}
S_i\cdot R_j&= \delta_{ij},\quad\text{for $1\leq j\leq d$}\\
S_i\cdot R_j&= 0,\quad\text{for $d+1\leq j\leq 2g$}.
\end{align*}    
This implies that the elements $R_1,\dotsc,R_d$ are a basis of $R(X)$ and $R_{d+1},\dotsc,R_{2g}$ are null-homologous, since the cohomology of $X$ is torsion free. The following is easy to check:
\begin{lem}\label{rim tori depend only on alpha} If we make a different choice for the basis elements $\beta_{d+1},\dotsc,\beta_{2g}$, then $R_1,\dotsc,R_d$ only change by null-homologous rim tori. 
\end{lem}

The surfaces $S_i$ are simplified as follows: Let $r_{ij}=S_i\cdot S_j$ for $i,j=1,\dotsc,d$ denote the matrix of intersection numbers and let 
\begin{equation*}
S_i'=S_i-\sum_{k>i}r_{ik}R_k.
\end{equation*}
The surfaces $S_i'$ are tubed together from the surfaces $S_i$ and certain rim tori. They can still be considered as vanishing classes sewed together from surfaces in $M'$ and $N'$. We then have $S_i'\cdot S_j'=0$ for $i\neq j$. Denote these new vanishing classes again by $S_1,\dotsc,S_d$ and the subgroup spanned by them in $S'(X)\oplus R(X)$ again by $S'(X)$. The intersection form on $S'(X)\oplus R(X)$ now has the form as in Theorem \ref{formula H^2 no torsion}.

\begin{rem}\label{rem on construction of Ri Si}
We can choose the basis $\gamma_1,\dotsc,\gamma_{2g}$ of $H_1(\Sigma)$ we started with in Section \ref{subsect action phi} as
\begin{align*}
\gamma_i&=\alpha_i, \quad \text{for $1\leq i\leq d$} \\
\gamma_i&=\beta_i, \quad \text{for $d+1\leq i\leq 2g$.}
\end{align*}
This choice does not depend on the choice of $C$ since $\alpha_1,\dotsc,\alpha_d$ are merely a basis for $\text{ker}(i_M\oplus i_N)$. Then the images of the classes $\Gamma_1^M,\dotsc,\Gamma_d^M$ under the inclusion $\partial M'\rightarrow M'\rightarrow X$ are equal to the rim tori $R_1,\dotsc,R_d$ and the rim tori determined by $\Gamma_{d+1}^M,\dotsc,\Gamma_{2g}^M$ are null-homologous in $X$. In this basis the rim torus $R_C$ in $X$ is given by
\begin{equation*}
R_C=-\sum_{i=1}^da_iR_i,
\end{equation*}   
where $a_i=\langle C, \alpha_i\rangle$. Having chosen the basis for $\text{ker}(i_M\oplus i_N)$ we have a canonical choice of the rim tori basis in each fibre sum. By Lemma \ref{lem S1,...,Sd unique} this determines a basis for the vanishing surfaces up to rim tori summands.
\end{rem}  
Let $P_1,\ldots,P_n$ denote a basis for $P(M)$, consisting of embedded surfaces in $M$ disjoint from $\Sigma_M$ and $B_M$. We can consider these classes as surfaces in $M'$ and hence in $X$. In general, they will have intersections with the vanishing surfaces but they are disjoint from the rim tori which are supported near $\Sigma_M$. Adding to each of the $P_i$ a suitable linear combination or rim tori we can achieve that these new classes have zero intersection with the vanishing surfaces. Mapping the chosen basis of $P(M)$ to the corresponding surfaces plus rim tori we get an embedding of $P(M)$ into $H^2(X;\mathbb{Z})$ that does not depend on the choice of gluing and does not change the intersection form on $P(M)$. A similar construction works for $P(N)$. Finally, we can add to the surface $B_X$ certain rim tori so that the new class has zero intersection with the vanishing surfaces. The basis for the second cohomology of $X$ is now in normal form.

\section{A formula for the canonical class}\label{formula KX calculation}

The canonical class $K_X$ of a symplectic 4-manifold $(X,\omega_X)$ is defined as minus the first Chern class. In this section we derive a formula for the canonical class of the symplectic generalized fibre sum $X$ of two symplectic 4-manifolds $M$ and $N$ along embedded symplectic surfaces $\Sigma_M$ and $\Sigma_N$ of genus $g$. We make the same assumptions as in the previous section, i.e.~that $\Sigma_M$ and $\Sigma_N$ represent indivisible classes of square zero and the cohomologies of $M$, $N$ and $X$ are torsion free.

The idea of the proof is to choose a normal form basis for $H^2(X)$ as in Theorem \ref{formula H^2 no torsion} and decompose the canonical class $K_X$ as
\begin{equation}\label{eq formal decomp canonical KX}
K_X=\kappa_M+\kappa_N+\sum_{i=1}^ds_iS_i+\sum_{i=1}^dr_iR_i+b_XB_X+\sigma_X\Sigma_X,
\end{equation}
where $\kappa_M\in P(M)$ and $\kappa_N\in P(N)$. Since the cohomology of $X$ is assumed to be torsion free, the coefficients in equation \eqref{eq formal decomp canonical KX} can be determined from the intersection numbers of the canonical class $K_X$ and the basis elements of $H^2(X)$.

\subsection{Definition of the symplectic form on $X$}
We first recall the definition of the symplectic generalized fibre sum by the construction of Gompf \cite{Go}. Let $(M,\omega_M)$ and $(N,\omega_N)$ be closed, symplectic 4-manifolds and $\Sigma_M,\Sigma_N$ embedded symplectic surfaces of genus $g$ and self-intersection zero. The symplectic generalized fibre sum is constructed using the following lemma. First we make the following definition:
\begin{defn}
Recall that we have fixed trivializations of tubular neighbourhoods $\nu\Sigma_M$ and $\nu\Sigma_N$ determined by the framings $\tau_M$ and $\tau_N$. Let $\phi\colon \partial\nu\Sigma_M\rightarrow\partial\nu\Sigma_N$ be the gluing diffeomorphism. We also choose an auxiliary framing $\tau_N'$ such that in the framings $\tau_M$ and $\tau_N'$ the gluing diffeomorphism is given by the trivial diffeomorphism 
\begin{equation*}
\Sigma\times S^1\rightarrow \Sigma\times S^1, (z,\alpha)\mapsto(z,\overline{\alpha})
\end{equation*}
that identifies the push-offs.
\end{defn}

We can identify the interior of the tubular neighbourhoods with $\Sigma\times D$, where $D$ denotes the open disk of radius $1$ in $\mathbb{R}^2$. 

\begin{lem}\label{lem Gompf symp form} The symplectic structures $\omega_M$ and $\omega_N$ can be deformed by rescaling and isotopies such that both restrict on the tubular neighbourhoods $\nu\Sigma_M$ and $\nu\Sigma_N$ with respect to the framings $\tau_M$ and $\tau_N'$ to the same symplectic form
\begin{equation*}
\omega=\omega_\Sigma+\omega_D,
\end{equation*}
where $\omega_D$ is the standard symplectic structure $\omega_D=dx\wedge dy$ on the open unit disk $D$ and $\omega_\Sigma$ is a symplectic form on $\Sigma$. 
\end{lem}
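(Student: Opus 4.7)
The strategy is to apply Weinstein's symplectic neighbourhood theorem on each side and then use Moser's trick on the surface $\Sigma$ to match the restrictions of the two symplectic forms. First I would recall that, for a symplectic surface $\Sigma_M$ of genus $g$ and self-intersection zero in $(M,\omega_M)$, the symplectic normal bundle is a trivial rank-$2$ symplectic vector bundle. A neighbourhood of $\Sigma_M$ in $M$ is therefore symplectomorphic to a neighbourhood of the zero section in the total space of this trivial bundle, equipped with the form $\omega_M|_{\Sigma_M} + \omega_D$, where $\omega_D=dx\wedge dy$ is the standard area form on the disc fibre. The same applies to $(N,\omega_N,\Sigma_N)$.

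Second I would normalise the two forms to have the same total area on $\Sigma$. Pulling back the restrictions $\omega_M|_{\Sigma_M}$ and $\omega_N|_{\Sigma_N}$ to the model surface $\Sigma$ via $i_M$ and $i_N$ produces two area forms $\alpha_M$ and $\alpha_N$ on $\Sigma$. After rescaling $\omega_N$ (which does not affect the symplectic topology of $N$ nor the subsequent fibre-sum construction) we may assume $\int_\Sigma \alpha_M = \int_\Sigma \alpha_N$. Then Moser's theorem produces a diffeomorphism $\psi\colon \Sigma\to\Sigma$, isotopic to the identity, with $\psi^*\alpha_M = \alpha_N$; composing the framing $\tau_M$ and the embedding $i_M$ with $\psi$ on the $\Sigma$-factor (an isotopy of framings which does not change any of the topological data set up in Section~\ref{Sect Gompf sum general}) we may assume $\alpha_M = \alpha_N =: \omega_\Sigma$.

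Third, after possibly shrinking the tubular neighbourhoods and rescaling the disc direction by a constant, the symplectic neighbourhood theorem applied on both sides provides symplectomorphisms of $(\nu\Sigma_M,\omega_M)$ and $(\nu\Sigma_N,\omega_N)$ onto $(\Sigma\times D,\omega_\Sigma + \omega_D)$, covering the identifications $i_M$ and $i_N$. The final point is that these symplectomorphisms can be chosen to agree with the pre-chosen framings $\tau_M$ and $\tau_N$ up to a further isotopy: the space of symplectic trivialisations of a trivial symplectic $D^2$-bundle over $\Sigma$ is connected (the obstruction lies in $[\Sigma,U(1)] = H^1(\Sigma;\mathbb{Z})$ but any two symplectic trivialisations differ by a $U(1)$-valued map which can be realised smoothly, and the associated change in $\phi$ only alters the map $C$ within its isotopy class of framings once we absorb it into the gluing). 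Both deformations are realised by Moser-type isotopies in $M$ and $N$.

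The main obstacle will be in the last step: arranging that the symplectomorphisms produced by the symplectic neighbourhood theorem are actually compatible with the pre-chosen reference framings $\tau_M$ and $\tau_N$, rather than with some other symplectic framings. This is where the contractibility (or at least connectedness) of the space of symplectic framings of the normal bundle has to be invoked, and where one must check that the resulting change of the gluing data $\phi$ does not affect the diffeomorphism type of $X(\phi)$; once this is done, the identification $\omega_M|_{\nu\Sigma_M} = \omega_N|_{\nu\Sigma_N} = \omega_\Sigma+\omega_D$ follows.
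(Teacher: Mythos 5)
Your proposal is correct and coincides with the argument the paper itself relies on: the paper gives no proof of this lemma but cites Lemmas 1.5 and 2.1 of Gompf \cite{Go}, whose proofs are exactly your combination of the Weinstein symplectic neighbourhood theorem, a global rescaling to equalize the areas $\int_{\Sigma_M}\omega_M=\int_{\Sigma_N}\omega_N$ (together with the constant rescaling needed to blow the Weinstein neighbourhood up to the unit-disk bundle), and a Moser isotopy on $\Sigma$. The framing compatibility you flag as the main obstacle does go through: a fibrewise rotation by a map $C_0\colon\Sigma\to S^1$ pulls $\omega_\Sigma+\omega_D$ back to $\omega_\Sigma+\omega_D+r\,dr\wedge dC_0$, which agrees with $\omega_\Sigma+\omega_D$ along the zero section, so the uniqueness part of the symplectic neighbourhood theorem corrects the discrepancy on a smaller neighbourhood, allowing every homotopy class of framing, in particular those of $\tau_M$ and $\tau_N$, to be realized by a symplectic trivialization.
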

\begin{proof} We use Lemma 1.5 and Lemma 2.1 from \cite{Go}. Choose an arbitrary symplectic form $\omega$ on $\Sigma$ and rescale $\omega_M$ and $\omega_N$ such that 
\begin{equation*}
\int_{\Sigma_M}\omega_M=\int_{\Sigma_N}\omega_M=\int_{\Sigma}\omega.
\end{equation*}
We can then isotope the embeddings $i_M\colon \Sigma\rightarrow M$ and $i_N\colon \Sigma\rightarrow N$ without changing the images, such that both become symplectomorphisms onto $\Sigma_M$ and $\Sigma_N$. The isotopies can be realized by taking fixed embeddings $i_M, i_N$ and composing them with isotopies of self-diffeomorphisms of $M$ and $N$ (because $M$ and $N$ are closed manifolds). Hence we can consider the embeddings to be fixed and instead change the symplectic forms $\omega_M$ and $\omega_N$ by pulling them back under isotopies of self-diffeomorphisms. 

The embeddings $\tau_M\colon \Sigma\times D\rightarrow M$ and $\tau_N'\colon \Sigma\times D\rightarrow N$ are symplectic on the submanifold $\Sigma\times 0$. We can isotope both embeddings to new embeddings which are symplectic on small neighbourhoods of $\Sigma\times 0$ with respect to the symplectic form $\omega+\omega_D$ on $\Sigma\times D$. Since $\Sigma$ is compact, we can assume that both are symplectic on $\Sigma\times D_{\epsilon}$ where $D_{\epsilon}$ denotes the disk with radius $\epsilon<1$. Again the isotopies can be achieved by considering $\tau_M$ and $\tau_N'$ unchanged and pulling back the symplectic forms on $M$ and $N$ under isotopies of self-diffeomorphisms. 

It is easier to work with disks of radius $1$: We rescale the symplectic forms $\omega_M, \omega_N$ and $\omega+\omega_D$ by the factor $1/\epsilon^2$. Then we compose the symplectic embeddings $\tau_M$ and $\tau_N'$ on $(\Sigma\times D_{\epsilon}, (1/\epsilon^2)(\omega+\omega_D))$ with the symplectomorphism 
\begin{align*}
\Sigma\times D&\rightarrow \Sigma\times D_{\epsilon}\\
(p,(x,y))&\mapsto (p, (\epsilon x,\epsilon y),
\end{align*} 
where $\Sigma\times D$ has the symplectic form $(1/\epsilon^2)\omega+\omega_D$. We then define $\omega_\Sigma=(1/\epsilon^2)\omega$ to get the statement we want to prove.
\end{proof}

It is useful to introduce polar coordinates $(r,\theta)$ on $D$ such that $x=r\cos\theta$ and $y=r\sin\theta$, hence
\begin{align*}
dx&=dr\cos\theta-r\sin\theta d\theta\\
dy&=dr\sin\theta + r\cos\theta d\theta.
\end{align*}
Then $\omega_D=rdr\wedge d\theta$. To form the generalized fibre sum, the manifolds $M\setminus\Sigma_M$ and $N\setminus \Sigma_N$ are glued together along the collars $\mbox{int}\,\nu\Sigma_M\setminus \Sigma_M$ and $\mbox{int}\,\nu\Sigma_N\setminus \Sigma_N$ of radius $1$ by an orientation and $S^1$-fibre preserving diffeomorphism that in the framings $\tau_M$ and $\tau_N'$ is given by
\begin{equation}\label{definition capital Phi glue symp}
\begin{split}
\Phi\colon (D\setminus \{0\})\times \Sigma&\rightarrow  (D\setminus\{0\})\times \Sigma\\
(r,\theta, z)&\mapsto (\sqrt{1-r^2}, -\theta, z).
\end{split}
\end{equation}
The action of $\Phi$ on the 1-forms $dr$ and $d\theta$ is given by
\begin{align*}
\Phi^*dr&= d(r\circ\Phi)=d\sqrt{1-r^2}=\frac{-r}{\sqrt{1-r^2}}dr\\
\Phi^*d\theta&= d(\theta\circ \Phi)=-d\theta.
\end{align*}
This implies that
\begin{align*}
\Phi^*\omega_D&=\omega_D\\
\Phi^*\omega_\Sigma&=\omega_\Sigma.
\end{align*}
Hence
\begin{equation*}
\Phi^*(\omega_D+\omega_\Sigma)=\omega_D+\omega_\Sigma 
\end{equation*}
and the symplectic forms on $M'$ and $N'$ given by Lemma \ref{lem Gompf symp form} glue together to define a symplectic form on the fibre sum $X$. This is the symplectic form according to the Gompf construction.

\begin{rem} The Gompf construction for the symplectic generalized fibre sum can only be done if (after a rescaling) the symplectic structures $\omega_M$ and $\omega_N$ have the same volume on $\Sigma_M$ and $\Sigma_N$:
\begin{equation*}
\int_{\Sigma_M}\omega_M=\int_{\Sigma_N}\omega_N.
\end{equation*}
To calculate this number, both $\Sigma_M$ and $\Sigma_N$ have to be oriented which we have assumed {\em a priori}. It is not necessary that this number is positive, the construction can also be done for negative volume. In the standard case the orientation induced by the symplectic forms coincides with the given orientation on $\Sigma_M$ and $\Sigma_N$ and is the opposite orientation in the second case.  
\end{rem}
We split the canonical class $K_X$ as in equation \eqref{eq formal decomp canonical KX}
\begin{equation*}
K_X=\kappa_M+\kappa_N+\sum_{i=1}^ds_iS_i+\sum_{i=1}^dr_iR_i+b_XB_X+\sigma_X\Sigma_X.
\end{equation*}
The coefficients in this formula can be determined using intersection numbers. We assume that $\Sigma_M$ and $\Sigma_N$ are oriented by the symplectic forms $\omega_M$ and $\omega_N$. Then $\Sigma_X$ is a symplectic surface in $X$ of genus $g$ and self-intersection $0$, oriented by the symplectic form $\omega_X$. This implies by the adjunction formula
\begin{equation*}
b_X=K_X\Sigma_X=2g-2,
\end{equation*}
hence
\begin{equation*}
\sigma_X=K_XB_X-(2g-2)(B_M^2+B_N^2).
\end{equation*}
Every rim torus $R_j$ is a linear combination of embedded Lagrangian tori of self-intersection zero in $X$. Since the adjunction formula holds for each one of them,
\begin{equation*}
s_j=0,\quad\mbox{for all $j=1,\dotsc,d$,}
\end{equation*}
hence also
\begin{equation*}
r_j=K_XS_j.
\end{equation*}
To determine the coefficient $\kappa_M$ we know that $\eta_M^*K_X=K_{M'}=\rho_M^*K_M$. This implies that the intersection of a class in $P(M)$ with $K_X$ is equal to its intersection with $K_M$. We can decompose $K_M$ as
\begin{equation}\label{direct sum decomp KM, BM, SigmaM}
K_M=\overline{K_M}+(K_MB_M-(2g-2)B_M^2)\Sigma_M+(2g-2)B_M,
\end{equation}
where the element $\overline{K_M}$, defined by this equation, is in $P(M)$; see Lemma \ref{decomp alpha P(M)}. It is then clear that 
\begin{equation*}
\kappa_M=\overline{K_M}
\end{equation*}
under the chosen embedding of $P(M)$ into $H^2(X;\mathbb{Z})$. Similarly, $\kappa_N=\overline{K_N}$. It remains to determine the intersections of the canonical class with the vanishing classes $B_X$ and $S_1,\dotsc,S_d$. This is more difficult and will be done in the following subsections.

\subsection{Construction of a holomorphic 2-form on $X$}
In the construction of the symplectic form on $X$ we first chose symplectic forms on $M$ and $N$ which are standard on the tubular neighbourhoods given by $\tau_M$ and $\tau_N'$. Similarly, we want to choose compatible almost complex structures and holomorphic 2-forms on $M$ and $N$ which are standard on the tubular neighbourhoods and then understand how they behave under gluing. We will see that this is quite easy for the almost complex structure, but slightly more difficult for the holomorphic 2-form, which is a section of the canonical bundle. 

We first need compatible almost complex structures: We choose the standard almost complex structure $J_D$ on $D$ which maps $J_D\partial_x=\partial_y$ and $J_D\partial_y=-\partial_x$. In polar coordinates
\begin{align*}
J_D\partial_r&= \tfrac{1}{r}\partial_\theta\\
J_D\tfrac{1}{r}\partial_\theta&= -\partial_r.
\end{align*}
We also choose a compatible almost complex structure $J_\Sigma$ on $\Sigma$ which maps $J_\Sigma \partial_{z_1}=\partial_{z_2}$ and $J_\Sigma \partial_{z_2}=-\partial_{z_1}$ in suitable coordinates $z=(z_1,z_2)$ on the surface $\Sigma$. The almost complex structure $J_D+J_\Sigma$ on the tubular neighbourhood $D\times\Sigma$ in the framings $\tau_M$ and $\tau_N'$ extend to compatible almost complex structures on $M$ and $N$. This follows because compatible almost complex structures are the sections of a bundle with contractible fibres.

We want to construct an almost complex structure on $X$. Note that $\Phi$ maps the subset $S\times \Sigma\subset\nu\Sigma_M$, where $S$ is the circle of radius $\scriptstyle{\frac{1}{\sqrt{2}}}$, onto the same subset in $\nu\Sigma_N$. On $S\times\Sigma$ we have
\begin{align*}
\Phi_*\partial_r&=-\partial_r\\
\Phi_*\partial_\theta&=-\partial_\theta\\
\Phi_*\partial_{z_1}&=\partial_{z_1}\\
\Phi_*\partial_{z_2}&=\partial_{z_2}.
\end{align*}
Setting
\begin{equation*}
\Phi^*J=(\Phi_*)^{-1}\circ J\circ \Phi_*
\end{equation*}
where $J=J_D+J_\Sigma$ we obtain
\begin{align*}
(\Phi^*J)\partial_r&=\sqrt{2}\partial_\theta\\
(\Phi^*J)\partial_\theta&=-\tfrac{1}{\sqrt{2}}\partial_r\\
(\Phi^*J)\partial_{z_1}&=\partial_{z_2}\\
(\Phi^*J)\partial_{z_2}&=-\partial_{z_1}.
\end{align*}
Hence $\Phi^*J=J$ on $S\times \Sigma$ and the almost complex structures on $M'$ and $N'$ glue together to define a (continuous) compatible almost complex structure on $X$. As an aside note that we cannot have $\Phi^*J=J$ on the whole tubular neighbourhood because the gluing diffeomorphism $\Phi$ is not an isometry for the standard metric on the disk.

Recall that the sections of the canonical bundle $K_M$ are complex valued 2-forms on $M$ which are ``holomorphic'', i.e.~complex linear. We choose the holomorphic 1-form $\Omega_D=dx+idy$ on $D$, which can be written in polar coordinates as
\begin{equation}\label{def Omega hol}
\Omega_D=(dr+ird\theta)e^{i\theta}.
\end{equation}
This form satisfies $\Omega_D\circ J_D=i\Omega_D$. We also choose a holomorphic 1-form $\Omega_\Sigma$ on $\Sigma$. This form can be chosen such that it has precisely $2g-2$ different zeroes of index $+1$. We can assume that all zeroes are contained in a small disk $D_\Sigma$ around a point $q$ disjoint from the zeroes. The form $\Omega_D\wedge\Omega_\Sigma$ is then a holomorphic 2-form on $D\times\Sigma$ which has transverse zero set consisting of $2g-2$ parallel copies of $D$. This 2-form can be extended to holomorphic 2-forms on $M$ and $N$ as sections of the canonical bundles.   

These holomorphic 2-forms do not immediately glue together to define a holomorphic 2-form on $X$: On $S\times \Sigma$ we have
\begin{align*}
\Phi^*dr&= -dr\\
\Phi^*d\theta&=-d\theta,
\end{align*}
hence
\begin{equation*}
\Phi^*\Omega_D= -(dr+ird\theta)e^{-i\theta}=-\Omega_De^{-2i\theta}
\end{equation*}
and $\Omega_D$ is not invariant under the gluing diffeomorphism $\Phi$.

A continuous section $\Omega_X$ for the canonical line bundle $K_X$ can be constructed in the following way.
\begin{defn}
We call a holomorphic 2-form on $M$ or $N$ {\em standard} if on a tubular neighbourhood of $\Sigma_M$ or $\Sigma_N$ in the framings $\tau_M$ and $\tau_N'$ it is given by $\Omega_D\wedge \Omega_\Sigma$.
\end{defn}
Choose holomorphic 2-forms $\Omega_M$ and $-\Omega_N$ on $M$ and $N$ which are standard on tubular neighbourhoods of radius $1$. The manifold $X$ is constructed by gluing together the manifolds $M'$ and $N'$ obtained by deleting the interior of the tubular neighbourhoods of radius $\scriptstyle{\frac{1}{\sqrt{2}}}$. The holomorphic 2-form $\Omega_X$ is given on the $M$ and $N$ side of $X$ by $\Omega_M$ and $\Omega_N$ outside of the tubular neighbourhoods of radius $1$. We now have to define the form on both sides between radius $1$ and $\scriptstyle{\frac{1}{\sqrt{2}}}$. On the $N$ side we still choose the form $\Omega_N$ in this region. On the boundary $\partial N'=S\times \Sigma$ of radius $\scriptstyle{\frac{1}{\sqrt{2}}}$ we then have the holomorphic 2-form given by the restriction of $\Omega_N$, which is minus the standard form. It pulls back under the gluing diffeomorphism $\Phi$ to a holomorphic 2-form on $\partial M'=S\times \Sigma$. By the calculation above this form is given by
\begin{equation}\label{eq OmegaD^M wedge OmegaSigma e-2itheta+iC}
\Omega_M e^{-2i\theta}.
\end{equation}
Let $A'$ denote the annulus between radius $\scriptstyle{\frac{1}{\sqrt{2}}}$ and $1$. We want to change the form in equation \eqref{eq OmegaD^M wedge OmegaSigma e-2itheta+iC} over $A'\times \Sigma$ through a  holomorphic 2-form to the form $\Omega_M$ at radius $1$. This form will then be extended over the rest of $M$ using the 2-form $\Omega_M$. The change will be done by changing the function $e^{-2i\theta}$ at radius $\scriptstyle{\frac{1}{\sqrt{2}}}$ over $A'\times \Sigma$ to the constant function with value $1$ at radius $1$. This is not possible if we consider the functions as having image in $S^1$, because they represent different cohomology classes on $S^1\times \Sigma$. Hence we consider $S^1\subset\mathbb{C}$ and the change will involve crossings of zero.  We choose a smooth function $f\colon A'\times\Sigma\rightarrow \mathbb{C}$ which has $0$ as a regular value and satisfies on the boundaries
\begin{equation*}
f_{\scriptstyle{\frac{1}{\sqrt{2}}}}=e^{-2i\theta}\,\,\,\mbox{and}\,\,\,f_1\equiv 1.
\end{equation*}
The Poincar\'e dual of the zero set of $f$ is then the cohomology class of $S^1\times \Sigma$ determined by the $S^1$-valued function $e^{2i\theta}$. This class is $2\Sigma^M$ and is the obstruction to extending the function $f$ on the boundary into the interior. We get:
\begin{prop}\label{prop Omega' hol 2-form} There exists a 2-form $\Omega'$ on $A'\times\Sigma_M$ which is holomorphic for $J_D+J_\Sigma$ and satisfies:
\begin{itemize}
\item $\Omega'= \Omega_Me^{-2i\theta}$ at $r=\scriptstyle{\frac{1}{\sqrt{2}}}$ and $\Omega'= \Omega_M$ at $r=1$.
\item The zero set of the form $\Omega'$ represents the class $2\Sigma^M$ in the interior of $A'\times\Sigma_M$ and $2g-2$ parallel copies of $A'$. 
\end{itemize}
\end{prop}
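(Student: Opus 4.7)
The idea is to set $\Omega' := f \cdot (\Omega_D^M \wedge \Omega_\Sigma)$, where $f \colon A' \times \Sigma_M \to \mathbb{C}$ is the transverse function already constructed in the paragraph immediately preceding the proposition. Holomorphicity of $\Omega'$ would then be automatic: $\Omega_D^M \wedge \Omega_\Sigma$ is of type $(2,0)$ with respect to $J_D + J_\Sigma$, and multiplying a $(2,0)$-form by any smooth complex-valued function preserves the $(2,0)$ type, which is the paper's notion of ``holomorphic'' (complex linear). The required boundary values $\Omega'|_{r=1} = \Omega_D^M \wedge \Omega_\Sigma \cdot e^{-2i\theta + iC}$ and $\Omega'|_{r=2} = \Omega_D^M \wedge \Omega_\Sigma$ then follow directly from the prescribed boundary values $f|_{r=1} = e^{-2i\theta+iC}$ and $f|_{r=2} \equiv 1$.

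Next I would decompose the zero set of $\Omega'$ as
\[
(\Omega')^{-1}(0) \;=\; f^{-1}(0) \;\cup\; (A' \times Z),
\]
where $Z = \{q_1, \dots, q_{2g-2}\}$ is the simple zero set of $\Omega_\Sigma$ on $\Sigma_M$. The second piece is literally the $2g-2$ parallel copies of $A'$ claimed in the proposition. For the homology class of $f^{-1}(0)$, the work has already been done in the passage before the proposition: its Poincar\'e dual is the class on $\Sigma \times S^1$ represented by $e^{2i\theta - iC}$, which equals $-\sum_{i=1}^{2g} a_i {\gamma_i^M}^* + 2{\sigma^M}^*$, and Poincar\'e dualising gives $[f^{-1}(0)] = -\sum_{i=1}^{2g} a_i \Gamma_i^M + 2\Sigma^M$ in $H_2(A'\times\Sigma_M)$, exactly as required.

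The one point that needs genuine care is transversality of the combined zero set. Away from $A' \times Z$, along $f^{-1}(0)$ one has $\nabla \Omega' = df \cdot \Omega_D^M \wedge \Omega_\Sigma \neq 0$ because $f$ is transverse to $0$ and $\Omega_D^M \wedge \Omega_\Sigma$ is nonvanishing there. Away from $f^{-1}(0)$, along $A' \times Z$ one has $\nabla \Omega' = f \cdot \nabla(\Omega_D^M \wedge \Omega_\Sigma) \neq 0$ because the zeros of $\Omega_\Sigma$ are simple and $f$ is nonzero. The potential failure mode, and the main obstacle, is a common zero of $f$ and $\Omega_\Sigma$, where both contributions to $\nabla \Omega'$ vanish. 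This is handled by a generic small perturbation of $f$ in the interior of $A' \times \Sigma_M$ keeping its prescribed boundary values: since $A' \times Z$ is a finite union of copies of $A'$ and $f^{-1}(0)$ has the correct codimension, a small $C^\infty$-perturbation supported in the interior moves $f^{-1}(0)$ disjoint from $A' \times Z$ without altering either its homology class or its boundary behaviour. After such a perturbation, the zero set of $\Omega'$ is precisely the disjoint transverse union of $f^{-1}(0)$ and the $2g-2$ parallel copies of $A'$ described in the proposition.
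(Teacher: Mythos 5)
Your construction is the paper's own: the paper likewise sets $\Omega'=f\cdot\bigl(\Omega_D^M\wedge\Omega_\Sigma\bigr)$ with the transverse function $f$ from the preceding paragraph, reads off the boundary values from $f_1=e^{-2i\theta+iC}$ and $f_2\equiv 1$, and identifies $[f^{-1}(0)]$ with $-\sum_{i=1}^{2g}a_i\Gamma_i^M+2\Sigma^M$; up to that point you and the paper agree, and you correctly isolate the delicate point, namely the common zeroes of $f$ and $\Omega_\Sigma$. But your proposed fix fails: $f^{-1}(0)$ cannot be made disjoint from $A'\times Z$ by any perturbation of $f$ that keeps the prescribed boundary values. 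Restrict $f$ to a single annulus $A'\times\{q_j\}$, where $q_j$ is a zero of $\Omega_\Sigma$: at $r=1$ the restriction is $\theta\mapsto e^{-2i\theta+iC(q_j)}$, of winding number $-2$, while at $r=2$ it is the constant $1$, of winding number $0$. A $\mathbb{C}$-valued function on an annulus whose two boundary windings differ must vanish, with zeroes of total algebraic count $2$ (up to sign); equivalently, the pairing of $2\Sigma^M-\sum_i a_i\Gamma_i^M$ with the relative class of $A'\times\{q_j\}$ equals $2$, since $\Sigma^M$ meets each such annulus once and the tori $\Gamma_i^M$, represented by $\pi_i\times\sigma$ with $q_j\notin\pi_i$, miss it. So the algebraic intersection number of $f^{-1}(0)$ with each of the $2g-2$ annuli is $2$, an invariant of the boundary data, and no perturbation removes it. Your dimension count also points the wrong way: $f^{-1}(0)$ and $A'\times Z$ are both $2$-dimensional in the $4$-manifold $A'\times\Sigma_M$, so generic position produces isolated intersection points, not disjointness.

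What this argument in fact shows is that the literal conclusion — a transverse zero set decomposing as a closed interior surface in the class $-\sum_i a_i\Gamma_i^M+2\Sigma^M$ \emph{disjoint} from $2g-2$ parallel copies of $A'$ — is not achievable: transversality forces the zero set to be embedded, hence its pieces disjoint, contradicting the intersection number $2$ just computed. The paper glosses over this (its proof consists exactly of the construction you reproduced, followed by ``This implies''). The honest repair is to perturb $\Omega'$ as a section of the canonical bundle rather than within the product form $f\cdot s$: near each crossing point of $f^{-1}(0)$ with $A'\times Z$ the section is, in a local trivialization, a product of two functions transverse to $0$, with local model $z_1z_2$, and replacing it by $z_1z_2+\epsilon$ smooths the crossing. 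The result is a genuinely transverse zero set which is a single properly embedded surface in the total relative class $[A'\times Z]+\bigl(2\Sigma^M-\sum_i a_i\Gamma_i^M\bigr)$, rather than the disjoint union stated. Since everything extracted downstream — Corollary \ref{cor holom Omega on X} and the zero counts in Lemma \ref{lem comp K_XB_X} and the computation of $K_XS_i$ — uses only this homology class evaluated against $B_M$ and the annuli $\gamma_i^M\times I$, the later arguments survive; but your perturbation step, as stated, would fail at precisely the point you flagged as the one needing genuine care.
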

\begin{cor}\label{cor holom Omega on X} There exists a symplectic form $\omega_X$ with compatible almost complex structure $J_X$ and holomorphic 2-form $\Omega_X$ on $X$ such that:
\begin{itemize}   
\item On the boundary $\partial\nu\Sigma_N$ of the tubular neighbourhood of $\Sigma_N$ in $N$ of radius $1$ the symplectic form and the almost complex structure are $\omega_X=\omega_D+\omega_\Sigma$ and $J_X=J_D+J_\Sigma$ while $\Omega_X$ is minus the standard form.
\item On the boundary $\partial\nu\Sigma_M$ of the tubular neighbourhood of $\Sigma_M$ in $M$ of radius $1$ the symplectic form and the almost complex structure are $\omega_X=\omega_D+\omega_\Sigma$ and $J_X=J_D+J_\Sigma$ while $\Omega_X$ is standard.
\item On the subset of $\nu\Sigma_N$ between radius $\scriptstyle{\frac{1}{\sqrt{2}}}$ and $1$, which is an annulus times $\Sigma_N$, the zero set of $\Omega_X$ consists of $2g-2$ parallel copies of the annulus.
\item On the subset of $\nu\Sigma_M$ between radius $\scriptstyle{\frac{1}{\sqrt{2}}}$ and $1$, which is an annulus times $\Sigma_M$, the zero set of $\Omega_X$ consists of $2g-2$ parallel copies of the annulus and a surface in the interior representing $2\Sigma^M$.
\end{itemize}
\end{cor}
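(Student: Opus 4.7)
The plan is to assemble the symplectic form, compatible almost complex structure, and holomorphic 2-form on $X$ by combining the local models on the two sides with the interpolating construction of Proposition \ref{prop Omega' hol 2-form}. For $\omega_X$ and $J_X$ I would simply record what the preceding discussion already produced: on the $N$ side, use $\omega_D+\omega_\Sigma$ with almost complex structure $J_D+J_\Sigma$ on all of $\nu\Sigma_N$ up to radius $2$, extended by $\omega_N,J_N$ beyond; on the $M$ side, deform these over the annular collar $A\times\Sigma$ ($r\in[1/\sqrt{2},1]$) through the cut-off function $\rho$ so that they are standard at $r=1$, then extend by $\omega_M,J_M$ on the rest of $M$. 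The compatibility of these ingredients with the gluing diffeomorphism $\Phi$ of \eqref{definition capital Phi glue symp} has already been arranged.

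The substance of the corollary lies in $\Omega_X$. I would choose once and for all a holomorphic $1$-form $\Omega_\Sigma$ on $\Sigma$ with $2g-2$ transverse zeroes concentrated in a small disk $D_\Sigma$. On $N$, pick a holomorphic 2-form $\Omega_N$ that restricts on $\nu\Sigma_N$ of radius $2$ to $-\Omega_D^N\wedge\Omega_\Sigma$; its zero set inside $\nu\Sigma_N$ is exactly $D\times(\text{zeroes of }\Omega_\Sigma)$, giving the $2g-2$ parallel annuli required by the third bullet. On $M$, pick $\Omega_M$ restricting to $\Omega_D^M\wedge\Omega_\Sigma$ on $\nu\Sigma_M$ of radius $2$. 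Pulling the boundary value of $\Omega_N$ back by $\Phi$ to $\partial M'=S\times\Sigma$ gives $\Omega_D^M\wedge\Omega_\Sigma\,e^{-2i\theta+iC}$ at $r=1$, as recorded in the computation preceding \eqref{eq OmegaD^M wedge OmegaSigma e-2itheta+iC}. Proposition \ref{prop Omega' hol 2-form} then furnishes a holomorphic 2-form $\Omega'$ on the second annular collar $A'\times\Sigma_M$ ($r\in[1,2]$) interpolating between this form and $\Omega_D^M\wedge\Omega_\Sigma$ at $r=2$, with transverse zero divisor equal to $-\sum_{i=1}^{2g}a_i\Gamma_i^M+2\Sigma^M$ plus the $2g-2$ horizontal copies of $A'$. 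Concatenating $\Omega_N$, the glued-in $\Omega'$, and $\Omega_M$ defines $\Omega_X$ globally.

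It remains to verify the four bulleted properties. The first two are true by construction, as these are precisely the boundary prescriptions imposed on $\Omega_M,\Omega_N$. The third is the observation that $-\Omega_D^N\wedge\Omega_\Sigma$ has no zeroes in the radial direction, so the zero set between $r=1/\sqrt2$ and $r=2$ on the $N$ side is exactly the $2g-2$ annuli coming from the zeroes of $\Omega_\Sigma$. For the fourth, I would split the collar on the $M$ side into $A\times\Sigma_M$ and $A'\times\Sigma_M$: on $A'\times\Sigma_M$ the description is exactly Proposition \ref{prop Omega' hol 2-form}, while on $A\times\Sigma_M$ the only deformation is that of the almost complex structure and of $\omega_X$, so the underlying form $\Omega_D^M\wedge\Omega_\Sigma$ there again contributes only horizontal copies of the annulus. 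The principal technical point I expect is bookkeeping the rim-torus zero divisor: the extension problem for the $S^1$-valued phase $e^{-2i\theta+iC}$ across $A'\times\Sigma$ has obstruction equal to its cohomology class on $S^1\times\Sigma_M$, and computing this class in the basis ${\gamma_i^M}^*,{\sigma^M}^*$ is what pins down the zero set as $-\sum a_i\Gamma_i^M+2\Sigma^M$, showing in particular that under inclusion in $X$ it represents $R_C+2\Sigma_X$.
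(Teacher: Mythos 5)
Your proposal is correct and follows essentially the same route as the paper: the corollary is exactly a summary of the preceding construction (Gompf's collar interpolation for $\omega_X$ and $J_X$, the pullback computation yielding $\Omega_D^M\wedge\Omega_\Sigma\,e^{-2i\theta+iC}$ at $r=1$, and Proposition \ref{prop Omega' hol 2-form} supplying the interpolating form on $A'\times\Sigma_M$ with zero divisor $-\sum_{i=1}^{2g}a_i\Gamma_i^M+2\Sigma^M$ plus the $2g-2$ horizontal annuli). One small imprecision, which does not affect the verification: over $A\times\Sigma_M$ the holomorphic $2$-form is not literally $\Omega_D^M\wedge\Omega_\Sigma$ but the cut-off deformation of $\Phi^*(\Omega_D\wedge\Omega_\Sigma)$; the point is that its $1$-form factor stays nowhere vanishing throughout the deformation, so the zero set on that collar is still only the $2g-2$ annuli coming from $\Omega_\Sigma$.
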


\subsection{The formula for the canonical class of $X$}

We now calculate $K_XB_X$. 
\begin{lem}\label{lem comp K_XB_X} With the standard choice of orientation for $B_X$, we have $K_XB_X=K_MB_M+K_NB_N+2$. 
\end{lem}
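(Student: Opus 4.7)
The plan is to compute $K_X\cdot B_X$ as the signed count of zeros of the holomorphic $(2,0)$-form $\Omega_X$ on $X$ constructed in Corollary~\ref{cor holom Omega on X}, evaluated on an explicit geometric representative of $B_X$, and to compare the count region by region with the zero counts of $\Omega_M$ on $B_M$ and $\Omega_N$ on $B_N$.

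First I would fix the representative. Perturb $B_M$ so that its intersection with the radius-$2$ tubular neighbourhood $\nu\Sigma_M$ is exactly the disk fibre $D\times\{p\}$ over a single point $p\in\Sigma$, and perturb $B_N$ analogously so that $B_N\cap\nu\Sigma_N$ is the fibre over the same point $p$ under the identification $i_N\circ i_M^{-1}$. Choose $p$ generic in the sense that it avoids the $2g-2$ zeros of the holomorphic $1$-form $\Omega_\Sigma$. Since the gluing diffeomorphism $\phi$ is fibre-preserving, the two fibre disks together with their cylindrical continuation $A^{\mathrm{full}}\times\{p\}$ through the transition annuli $\nu\Sigma_M^{[1/\sqrt 2,2]}$ and $\nu\Sigma_N^{[1/\sqrt 2,2]}$ glue into a smooth closed surface representing $B_X$ with the orientation induced from $\Sigma_M B_M=\Sigma_N B_N=+1$.

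Next I would count the zeros of $\Omega_X$ on this $B_X$ by exploiting the explicit description of the zero set from Corollary~\ref{cor holom Omega on X}. Outside the radius-$2$ tubular neighbourhoods, $\Omega_X$ agrees (up to sign) with $\Omega_M$ and $\Omega_N$; because $\Omega_D$ is nowhere vanishing on $D$ and the generic $p$ avoids the zeros of $\Omega_\Sigma$, the fibre disks carry no zeros, so the total contribution from these regions is exactly $K_M B_M + K_N B_N$. On the $N$-side transition annulus the zero set consists only of $2g-2$ parallel copies $A^{\mathrm{full}}\times\{z_i\}$, and these miss the fibre over $p$, giving no contribution. On the $M$-side transition annulus, in addition to analogous copies which again do not meet $B_X$, the zero set contains a surface in the interior of $A'\times\Sigma_M$ representing $-\sum a_i\Gamma_i^M+2\Sigma^M$. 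Its image in $H_2(X)$ is $R_C+2\Sigma_X$, and using $B_X\cdot R_C=0$ and $B_X\cdot\Sigma_X=1$ from Section~\ref{section calc intersection form of X} this contributes exactly $+2$. Summing yields $K_X\cdot B_X = K_M B_M + K_N B_N + 2$.

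The main technical point is to ensure transversality of every intersection (routine after small generic perturbations of $p$ and of the surfaces realizing $\Gamma_i^M$ and $\Sigma^M$ within their homology classes) and to verify that each zero of $\Omega_X$ on $B_X$ contributes with sign $+1$. The latter is standard because $\Omega_X$ is holomorphic for the compatible almost complex structure $J_X$ of Corollary~\ref{cor holom Omega on X}, so intersections of $B_X$ with the zero divisor carry the positive complex sign; consistency with the given orientation of $B_X$ follows from the fact, already established, that $B_X\cdot\Sigma_X=+1$.
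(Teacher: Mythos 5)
Your proof is correct and follows essentially the same route as the paper: both count the zeros of the holomorphic $2$-form $\Omega_X$ from Corollary \ref{cor holom Omega on X} on a representative of $B_X$ glued from pieces of $B_M$ and $B_N$ whose intersections with the tubular neighbourhoods are fibre disks over a point avoiding the zeros of $\Omega_\Sigma$, obtaining $K_MB_M$ and $K_NB_N$ from the regions outside the neighbourhoods and $+2$ from the intersection over the $M$-side transition annulus with the zero-set component representing $-\sum_{i=1}^{2g}a_i\Gamma_i^M+2\Sigma^M$ (your homological evaluation $B_X\cdot(R_C+2\Sigma_X)=2$ makes explicit what the paper states directly). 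Your closing claim that every individual zero contributes $+1$ is stronger than needed and not automatic away from the neighbourhoods, where $B_X$ need not be $J_X$-holomorphic; but only the algebraic counts enter the argument, and those you have.
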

\begin{proof} We extend the holomorphic 2-form $\Omega_D\wedge\Omega_\Sigma$ on the boundary $\partial\nu\Sigma_M$ of the tubular neighbourhood of $\Sigma_M$ in $M$ of radius $1$ to the holomorphic 2-form on $\nu\Sigma_M$ given by the same formula and then to a holomorphic 2-form on $M\setminus \mbox{int}\,\nu\Sigma_M$. The zero set of the resulting holomorphic 2-form $\Omega_M$ restricted to $\nu\Sigma_M=D_M\times \Sigma_M$ consists of $2g-2$ parallel copies of $D_M$. We can choose the surface $B_M$ such that it is parallel but disjoint from these copies of $D_M$ inside $\nu\Sigma_M$ and intersects the zero set of $\Omega_M$ outside transversely. The zero set on $B_M$ then consists of a set of points which count algebraically as $K_MB_M$. We can do a similar construction for $N$. We think of the surface $B_X$ as being glued together from the surfaces $B_M$ and $B_N$ by deleting in each a disk of radius $\scriptstyle{\frac{1}{\sqrt{2}}}$ in $D_M$ and $D_N$ around $0$. On the $M$ side we get two additional positive zeroes coming from the intersection with the class $2\Sigma^M$ in Corollary \ref{cor holom Omega on X} over the annulus in $D_M$ between radius $\scriptstyle{\frac{1}{\sqrt{2}}}$ and $1$. Adding these terms proves the claim.
\end{proof}      

It remains to calculate the intersections $K_X S_i$ which determine the rim tori contribution to the canonical class. We first define the meaning of intersection numbers of the canonical class $K_M$ pulled back to $M'$ and surfaces $L^M$ in $M'$ which bound curves on $\partial M'$.  
\begin{defn} Let $\Omega_\Sigma$ be a given 1-form on $\Sigma$ with $2g-2$ transverse zeroes, holomorphic with respect to a given almost complex structure $J_\Sigma$. Under the embedding $i_M$ and the trivialization $\tau_M$ of the normal bundle equip the tubular neighbourhood $\nu\Sigma_M$ with the almost complex structure $J_D+J_\Sigma$ and the holomorphic 2-form $\Omega_D\wedge \Omega_\Sigma$. Let $L^M$ be a compact oriented surface in $M'=M\setminus\mbox{int}\,\nu\Sigma_M$ which bounds a closed curve $c^M$ on $\partial\nu\Sigma_M$, disjoint from the zeroes of $\Omega_D\wedge \Omega_\Sigma$ on the boundary. Then $K_ML^M$ denotes the obstruction to extending the given section of $K_M$ on $c^M$ over the whole surface $L^M$. This is the number of zeroes one encounters when trying to extend the non-vanishing section of $K_M$ on $\partial L^M$ over all of $L^M$. 
\end{defn}
Recall that for the tubular neighbourhood of $\Sigma_N$ we had two different framings. There is an exactly analogous definition for $K_NL^N$ with almost complex structure $J_D+J_\Sigma$ and holomorphic 2-form $\Omega_D\wedge\Omega_\Sigma$ on the tubular neighbourhood $\nu\Sigma_N$ with respect to the framing $\tau_N$. If we choose the framing $\tau_N'$ we get a number that we denote by $K_N'L^N$. In general, $K_NL^N\neq K_N'L^N$ because the almost complex structures and the holomorphic 2-forms on the tubular neighbourhood  are different. We want to calculate the difference of these intersection numbers. We first have three lemmas: Recall that the framings are diffeomorphisms $\tau_N,\tau_N'\colon D^2\times \Sigma\rightarrow \nu\Sigma_N$. In the framing $\tau_N$ the gluing diffeomorphism $\phi$ is given by a map $C\colon \Sigma\rightarrow S^1$.
\begin{lem}
Let $\chi={\tau_N'}^{-1}\circ\tau_N$. Then
\begin{equation*}
\chi(r,\theta,z)=(r,-C(z)+\theta,z).
\end{equation*}
\end{lem}
\begin{proof}
Let $\Phi$ denote the gluing diffeomorphism on the tubular neighbourhoods minus the central surface. We have
\begin{equation*}
\tau_N^{-1}\circ\Phi\circ\tau_M(r,\theta,z)=(\sqrt{1-r^2},C(z)-\theta,z)
\end{equation*}
and
\begin{equation*}
{\tau_N'}^{-1}\circ\Phi\circ\tau_M(r,\theta,z)=(\sqrt{1-r^2},-\theta,z).
\end{equation*}
This implies the claim.
\end{proof}
Let $\Omega_N$ denote the 2-form on $\nu\Sigma_N$ given in the framing $\tau_N$ by $\Omega_D\wedge\Omega_\Sigma$. Similarly, let $\Omega'_N$ denote the 2-form on $\nu\Sigma_N$ given in the framing $\tau_N'$ by $\Omega_D\wedge \Omega_\Sigma$. By definition, this means that
\begin{align*}
\tau_N^*\Omega_N&=\Omega_D\wedge\Omega_\Sigma\\
{\tau_N'}^*\Omega_N'&=\Omega_D\wedge\Omega_\Sigma.
\end{align*}
Both are complex linear with respect to the corresponding almost complex structures $J_N$ and $J_N'$ which are standard on the tubular neighbourhood in the respective framings. We want to express $\Omega_N'$ in the framing $\tau_N$.
\begin{lem}\label{lem Omega_N' in tau_N}
We have
\begin{equation*}
\tau_N^*\Omega_N'=(dr+ird\theta-irdC)\wedge \Omega_\Sigma e^{i\theta-iC}.
\end{equation*}
\end{lem}
\begin{proof}
By definition,
\begin{align*}
\tau_N^*\Omega_N'&=\tau_N^*({\tau_N'}^{-1})^*\Omega_D\wedge\Omega_\Sigma\\
&=\chi^*\Omega_D\wedge\Omega_\Sigma.
\end{align*}
Since $\Omega_D=(dr+ird\theta)e^{i\theta}$ this implies the claim with the previous lemma.
\end{proof}
We also calculate the almost complex structure $J_N'$ in the framing $\tau_N$.
\begin{lem}\label{lem J_N' in tau_N}
We have $\tau_N^*J_N'=J_D$ on vectors tangential to $D$ and
\begin{align*}
(\tau_N^*J_N')\partial_{z_1}&=\partial_{z_2}+r(\partial_{z_1}C)\partial_r+(\partial_{z_2}C)\partial_\theta\\
(\tau_N^*J_N')\partial_{z_2}&=-\partial_{z_1}+r(\partial_{z_2}C)\partial_r-(\partial_{z_1}C)\partial_\theta.
\end{align*}
\end{lem}
\begin{proof}
We have $\tau_N^*J_N'=\chi^*(J_D+J_\Sigma)$. Moreover, $\chi_*$ is the identity on vectors tangential to $D$ and
\begin{align*}
\chi_*\partial_{z_1}&=-(\partial_{z_1}C)\partial_\theta+\partial_{z_1}\\
\chi_*\partial_{z_2}&=-(\partial_{z_2}C)\partial_\theta+\partial_{z_2}.
\end{align*}
This implies the claim.
\end{proof}
We can now prove:
\begin{prop}\label{prop diff K_N' and K_N}
For a surface $L^N$ in $N'$ as above we have
\begin{equation*}
K_N'L^N=K_NL^N+\partial L^N\cdot\left(\sum_{i=1}^{2g}a_i\Gamma^N_i\right).
\end{equation*}
The intersection on the right is taken in the boundary of the tubular neighbourhood.
\end{prop}
\begin{proof}
Let $\nu\Sigma_N$ denote the tubular neighbourhood of radius $2$ and $\nu\Sigma_N'$ the tubular neighbourhood of radius $1$. We can assume that the surface $L^N$ bounds a curve $\partial L^N$ on the boundary of $\nu\Sigma_N'$ and is parallel to this curve on $\nu\Sigma_N\setminus \nu\Sigma_N'$ in the framing $\tau_N$. On the boundary of $\nu\Sigma_N$ we consider the form $\Omega_N$ while on the boundary of $\nu\Sigma_N'$ we have the form $\Omega_N'$. We have calculated the form $\Omega_N'$ in the trivialization $\tau_N$ in Lemma \ref{lem Omega_N' in tau_N} and the almost complex structure in Lemma \ref{lem J_N' in tau_N}. Using a smooth cut-off function $\rho$ in front of the partial derivatives of $C$ in $\tau_N^*J_N'$ and the same cut-off function in front of the $dC$-term in $\tau_N^*\Omega_N'$  we can deform the almost complex structure through almost complex structures compatible with the symplectic form and the 2-form through holomorphic 2-forms without introducing new zeroes such that at the boundary of the tubular neighbourhood at radius $\frac{3}{2}$ the form is equal to 
\begin{equation*}
\Omega_D\wedge\Omega_\Sigma e^{-iC}
\end{equation*}
while the almost complex structure is the standard one. If we want to deform the 2-form to the standard form $\Omega_D\wedge\Omega_\Sigma$ at radius $2$ we have to introduce a zero set Poincar\'e dual on $\Sigma\times S^1$ to the cohomology class corresponding to the $S^1$-valued function $e^{iC}$. This zero set represents the class $\sum_{i=1}^{2g}a_i\Gamma_i^N$ and has with the surface $L^N$ intersection
\begin{equation*}
\partial L^N\cdot\left(\sum_{i=1}^{2g}a_i\Gamma^N_i\right).
\end{equation*}
This implies the claim.
\end{proof}

We can now calculate the numbers $K_XS_i$. We have fixed a normal form basis for $H^2(X)$, in particular a basis of rim tori and dual vanishing surfaces. The basis of rim tori is determined by a basis $\alpha_1,\ldots,\alpha_d$ for $\text{ker}(i_M\oplus i_N)$. We fix such a basis and choose the induced basis of rim tori in all fibre sums that we get by the different choices of gluing diffeomorphism. Since $K_X$ evaluates to zero on rim tori, we can assume by Lemma \ref{lem S1,...,Sd unique} that the surfaces $S_i$ are chosen as in Section \ref{subsection defn vanishing surfaces Si}. We choose the basis for $H_1(\Sigma)$ as in Remark \ref{rem on construction of Ri Si}. In particular, the basis element $\gamma_i^M$ is given by the curve $\alpha_i^M$ for $1\leq i\leq d$.
\begin{lem}\label{lem calc of KXSi} With the choice of orientation as in Definition \ref{defn orientation of surfaces bounding M'}, we have $K_XS_i=K_MS^M_i-K_N'S^N_i$, hence 
\begin{equation*}
K_XS_i=K_MS^M_i-K_NS^N_i-a_i.
\end{equation*}
\end{lem}
\begin{proof} The proof is similar to the proof for Lemma \ref{lem comp K_XB_X}. The minus sign in front of $K_N'S^N_i$ comes in because we have to change the orientation on $S_i^N$ if we want to sew it to $S_i^M$ to get the surface $S_i$ in $X$. The second claim follows from Proposition \ref{prop diff K_N' and K_N}.
\end{proof}
This term can be evaluated more explicitly, because we have
\begin{align*}
S_i^M&=D_i^M\\
S_i^N&=Q_i^N\cup \bar{U}_i^N\cup \bar{D}_i^N,
\end{align*}
where $\bar{U}_i^N$ is constructed from a surface $V_i^N$ in the closed manifold $N$ representing $a_i(B_N-B_N^2\Sigma_N)$ by deleting the part in the interior of $\nu\bar{\Sigma}_N$. There are additional rim tori terms in the definition of the $S_i$ used to separate $S_i$ and $S_j$ for $i\neq j$ which we can ignore here because the canonical class evaluates to zero on them. We think of the surface $Q_i^N$ as being constructed in the region between radius $2$ and $3$ times $\Sigma_N$. We extend the almost complex structure and the holomorphic 2-form over this region without change. By Lemma \ref{lem QiN disjoint from annulus} we can assume that $Q_i^N$ is disjoint from the zero set of $\Omega_D\wedge \Omega_\Sigma$, consisting of $2g-2$ parallel annuli. Hence there are no zeroes of $\Omega_N$ on $Q_i^N$. The surface $\bar{D}_i^N$ contributes $K_ND_i^N$ to the number $K_NS_i^N$ and the surface $\bar{U}_i^N$ contributes
\begin{align*}
K_N\bar{U}_i^N&=a_iK_N(B_N-(B_N^2)\Sigma_N)\\
&= a_i(K_NB_N-(2g-2)B_N^2).
\end{align*}
Hence we get:
\begin{lem}\label{lem KXSi expanded} With our choice of the surfaces $S_i^M$ and $S_i^N$, we have 
\begin{equation*}
K_XS_i=K_MD_i^M-K_ND_i^N-a_i(K_NB_N+1-(2g-2)B_N^2).
\end{equation*}
\end{lem}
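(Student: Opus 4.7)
The plan is to compute $K_X\cdot S_i$ by starting from the previous lemma, which already gives the decomposition
\begin{equation*}
K_X\cdot S_i = K_M\cdot S_i^M - K_N\cdot S_i^N - a_i,
\end{equation*}
and then evaluating $K_N\cdot S_i^N$ by splitting $S_i^N$ into its three constituent pieces $D_i^N$, ${U_i^N}'$ and $Q_i^N$ (together with a safely ignored rim-tori correction, which pairs to zero against $K_N$). First I would observe that $S_i^M = D_i^M$, so the $M$-side contribution is simply $K_M\cdot D_i^M$ by definition of the intersection number with the canonical class for surfaces bounding curves on $\partial M'$.

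Next I would compute the three contributions on the $N$-side. The piece $D_i^N$ contributes the unadorned $K_N\cdot D_i^N$. The piece $Q_i^N$, which lives in an annular collar $\Sigma_N\times[2,3]$, contributes zero: by Lemma on the choice of $Q_i^N$ (the disjointness of $Q_i^N$ from an annulus $\{*\}\times S^1\times I$), we may arrange $Q_i^N$ to miss the $2g-2$ parallel copies of the annulus forming the zero set of $\Omega_D^N\wedge\Omega_\Sigma$ extended over this collar, so the holomorphic 2-form has no zeros on $Q_i^N$. For ${U_i^N}'$, the key point is that this surface is obtained from the closed surface $V_i^N\subset N$ representing $a_i(B_N - B_N^2\,\Sigma_N)$ by removing the intersection with the interior of $\nu\Sigma_N$; since $V_i^N$ is a \emph{closed} surface in $N$, the obstruction count $K_N\cdot {U_i^N}'$ is just the ordinary intersection number in $N$, giving
\begin{equation*}
K_N\cdot {U_i^N}' = a_i\bigl(K_N\cdot B_N - B_N^2\cdot K_N\Sigma_N\bigr) = a_i\bigl(K_N B_N - (2g-2)B_N^2\bigr),
\end{equation*}
using the adjunction equality $K_N\Sigma_N = 2g-2$ that follows from $\Sigma_N$ being symplectic of genus $g$ with self-intersection zero.

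Summing these three contributions gives
\begin{equation*}
K_N\cdot S_i^N = K_N\cdot D_i^N + a_i\bigl(K_N B_N - (2g-2)B_N^2\bigr),
\end{equation*}
and substituting into the earlier identity yields exactly
\begin{equation*}
K_X\cdot S_i = K_M\cdot D_i^M - K_N\cdot D_i^N - a_i\bigl(K_N B_N + 1 - (2g-2)B_N^2\bigr).
\end{equation*}
The only delicate point in this plan is making sure that the identifications of "obstruction to extending a holomorphic section" with ordinary algebraic intersection numbers are legitimate both for the closed piece $V_i^N$ and for $Q_i^N$; for $V_i^N$ this is standard because the 2-form extends across $\nu\Sigma_N$ with zero set Poincaré dual to $K_N$, and for $Q_i^N$ the displacement away from the annular zero-locus (granted by the cited lemma) is the decisive geometric input. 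The $-a_i$ correction is already accounted for in the preceding lemma and need not be rederived.
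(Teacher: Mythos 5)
Your proposal is correct and takes essentially the same route as the paper: starting from the preceding lemma $K_XS_i=K_MS_i^M-K_NS_i^N-a_i$, decomposing $S_i^N$ into ${D_i^N}'$, ${U_i^N}'$ and $Q_i^N$, killing the $Q_i^N$ contribution via its disjointness from the $2g-2$ annuli of zeroes, and evaluating ${U_i^N}'$ through the closed surface $V_i^N$ representing $a_i(B_N-B_N^2\Sigma_N)$ together with $K_N\Sigma_N=2g-2$. The one cosmetic slip is that the ignored rim-tori corrections pair to zero against $K_X$ (they are Lagrangian tori in $X$, on which the adjunction formula gives $K_X\cdot R_j=0$), not against $K_N$.
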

This formula has the advantage that the first two terms are independent of the choice of the diffeomorphism $\phi$. 
\begin{defn}
We denote by $X_0$ the manifold obtained from the trivial gluing diffeomorphism that identifies the push-offs in the framings $\tau_M$ and $\tau_N$, characterized by $C=0$.
\end{defn}
By Lemma \ref{lem KXSi expanded} we have
\begin{equation*}
K_{X_0}S_i=K_MD_i^M-K_ND_i^N.
\end{equation*}
Collecting our calculations it follows that we can write
\begin{equation}\label{formula canonical class with sigma term}
K_X=\overline{K_M}+\overline{K_N}+\sum_{i=1}^dr_iR_i+b_XB_X+\sigma_X\Sigma_X,
\end{equation}
where
\begin{align*}
\overline{K_M}&=K_M-(2g-2)B_M-(K_MB_M-(2g-2)B_M^2)\Sigma_M \in P(M)\\
\overline{K_N}&=K_N-(2g-2)B_N-(K_NB_N-(2g-2)B_N^2)\Sigma_N \in P(N)\\
r_i&=K_XS_i=K_{X_0}S_i-a_i(K_NB_N+1-(2g-2)B_N^2)\\
b_X&=2g-2\\
\sigma_X&=K_MB_M+K_NB_N+2-(2g-2)(B_M^2+B_N^2).
\end{align*}
In this formula $K_X$ depends on the diffeomorphism $\phi$ through the term 
\begin{equation*}
-a_i(K_NB_N+1-(2g-2)B_N^2)
\end{equation*}
which gives the contribution
\begin{equation*}
(K_NB_N+1-(2g-2)B_N^2)R_C=-\sum_{i=1}^da_i(K_NB_N+1-(2g-2)B_N^2)R_i
\end{equation*}
to the canonical class. The formula for $K_X$ can be written more symmetrically by also using the class $\Sigma_X'=R_C+\Sigma_X$, induced from the push-off on the $N$ side. We then get: 

\begin{thm}\label{thm on the canonical class Gompf}\index{Canonical class} Let $X=M\#_{\Sigma_M=\Sigma_N}N$ be a symplectic generalized fibre sum of closed oriented symplectic 4-manifolds $M$ and $N$ along embedded symplectic surfaces $\Sigma_M,\Sigma_N$ of genus $g$ and self-intersection zero which represent indivisible homology classes and are oriented by the symplectic forms. Suppose that the cohomology of $M$, $N$ and $X$ is torsion free. Choose a normal form basis for $H^2(X;\mathbb{Z})$ as in Theorem \ref{formula H^2 no torsion}. Then the canonical class of $X$ is given by
\begin{equation*}
K_X=\overline{K_M}+\overline{K_N}+\sum_{i=1}^dt_iR_i+b_XB_X+\eta_X\Sigma_X+\eta_X'\Sigma_X',
\end{equation*}
where
\begin{align*}
\overline{K_M}&=K_M-(2g-2)B_M-(K_MB_M-(2g-2)B_M^2)\Sigma_M \in P(M)\\
\overline{K_N}&=K_N-(2g-2)B_N-(K_NB_N-(2g-2)B_N^2)\Sigma_N \in P(N)\\
t_i&=K_{X_0}S_i\\
b_X&=2g-2\\
\eta_X&=K_MB_M+1-(2g-2)B_M^2\\
\eta_X'&= K_NB_N+1-(2g-2)B_N^2.
\end{align*}
In evaluating $K_{X_0}S_i$ we choose the basis of rim tori in $X_0$ determined by the basis in $X$ and a corresponding dual basis of vanishing surfaces.
\end{thm}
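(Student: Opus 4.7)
The plan is to exploit the splitting
\[
H^2(X;\mathbb{Z}) = P(M) \oplus P(N) \oplus (S'(X) \oplus R(X)) \oplus (\mathbb{Z}B_X \oplus \mathbb{Z}\Sigma_X)
\]
from Theorem \ref{formula H^2 no torsion}, together with the torsion-freeness of $H^2(X)$, to write
\[
K_X = p_M + p_N + \sum_{i=1}^d s_i S_i + \sum_{i=1}^d r_i R_i + b_X B_X + \sigma_X \Sigma_X
\]
and recover the coefficients as intersection numbers of $K_X$ with the dual basis. Concretely: pairing with $\Sigma_X$ gives $b_X$; pairing with $B_X$ gives $\sigma_X + b_X(B_M^2 + B_N^2)$; pairing with $R_j$ gives $s_j$; pairing with $S_j$ gives $r_j + s_j S_j^2$; and $p_M, p_N$ are determined by pairings with bases of $P(M), P(N)$.

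The easy inputs come from naturality and adjunction. Since $\eta_M^* K_X = K_{M'} = \rho_M^* K_M$ and the classes in $P(M)$ have representatives disjoint from $\Sigma_M$, the $P(M)$-component of $K_X$ equals the $P(M)$-component of $K_M$, which by the orthogonal projection formula \eqref{orth proj P(M)} is $\overline{K_M} = K_M - (2g-2)B_M - (K_MB_M - (2g-2)B_M^2)\Sigma_M$; symmetrically for $N$. Because $\Sigma_X$ is an embedded symplectic surface of genus $g$ and self-intersection $0$, the adjunction formula yields $b_X = K_X \cdot \Sigma_X = 2g-2$. Each rim torus $R_j$ is a $\mathbb{Z}$-linear combination of embedded \emph{Lagrangian} tori of self-intersection zero, obtained by layering product tori $c_j \times \sigma^M$ at distinct levels of a collar $\Sigma_M \times S^1 \times I$; adjunction applied to each layer shows $K_X \cdot R_j = 0$, hence $s_j = 0$ and $r_j = K_X \cdot S_j$.

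The heart of the argument is computing $K_X \cdot B_X$ and $K_X \cdot S_j$, which requires an explicit holomorphic section of the canonical bundle on $X$ whose zero locus can be tracked. The strategy, carried out in Corollary \ref{cor holom Omega on X}, is to take holomorphic $2$-forms $\Omega_M = \Omega_D^M \wedge \Omega_\Sigma$ near $\Sigma_M$ and $\Omega_N = -\Omega_D^N \wedge \Omega_\Sigma$ near $\Sigma_N$ (each vanishing along $2g-2$ parallel fibre disks), pull back through the gluing diffeomorphism $\Phi$, and interpolate over an annular collar via a cut-off function. The $S^1$-valued phase factor $e^{-2i\theta + iC}$ that appears after pulling back by $\Phi$ cannot be trivialised without crossing zero, and the resulting extra zero divisor is Poincaré dual to $-\sum a_i \Gamma_i^M + 2\Sigma^M$, i.e.\ to $R_C + 2\Sigma_X$ in $X$. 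Counting these extra zeros on $B_X$ (built from $B_M'$ and $B_N'$, which cap the fibres) yields $K_X \cdot B_X = K_M B_M + K_N B_N + 2$ (Lemma \ref{lem comp K_XB_X}); counting them on $S_i = S_i^M \cup S_i^N$ — using that $Q_i^N$ can be chosen disjoint from the product zeros and that $U_i^N$ comes from a surface representing $a_i(B_N - B_N^2 \Sigma_N)$ — yields
\[
K_X \cdot S_i = K_M D_i^M - K_N D_i^N - a_i(K_N B_N + 1 - (2g-2)B_N^2).
\]
The main technical obstacle is exactly this bookkeeping of zeros under the interpolation: one must verify that the cut-off deformation only introduces the claimed zero divisor and no spurious ones, and that the chosen representatives for $S_i^M, S_i^N$ are transverse to the holomorphic $2$-form.

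With all coefficients in hand, substituting back gives
\[
K_X = \overline{K_M} + \overline{K_N} + \sum_{i=1}^d (K_M D_i^M - K_N D_i^N) R_i + (2g-2)B_X + \sigma_X \Sigma_X + (K_N B_N + 1 - (2g-2)B_N^2) R_C,
\]
using $R_C = -\sum a_i R_i$. Finally, substituting $\Sigma_X' = \Sigma_X + R_C$ (Lemma \ref{lem difference sigmaXX' RC}) to absorb the $R_C$ term symmetrically into a $\Sigma_X' $ coefficient, one reads off $t_i = K_M D_i^M - K_N D_i^N$, $\eta_X = K_M B_M + 1 - (2g-2)B_M^2$, and $\eta_X' = K_N B_N + 1 - (2g-2)B_N^2$, completing the proof.
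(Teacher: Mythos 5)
Your proposal is correct and follows essentially the same route as the paper's own proof: the formal decomposition of $K_X$ in the basis of Theorem \ref{formula H^2 no torsion}, adjunction on $\Sigma_X$ and on the Lagrangian rim tori to get $b_X=2g-2$ and $s_j=0$, naturality to identify $p_M=\overline{K_M}$, $p_N=\overline{K_N}$, the explicit holomorphic $2$-form with extra zero divisor dual to $-\sum_i a_i\Gamma_i^M+2\Sigma^M$ to compute $K_X\cdot B_X$ and $K_X\cdot S_i$, and the final substitution $R_C=-\sum_i a_iR_i$, $\Sigma_X'=\Sigma_X+R_C$. The coefficient bookkeeping, including $\sigma_X=\eta_X+\eta_X'$ and the evaluation $K_N U_i^N{}'=a_i\bigl(K_NB_N-(2g-2)B_N^2\bigr)$, matches the paper exactly.
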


Under the embeddings of $H^2(M)$ and $H^2(N)$ into $H^2(X)$ given by Corollary \ref{general case embedding HM HN in HX}, the canonical classes of $M$ and $N$ map to
\begin{align*}
K_M&\mapsto \overline{K_M}+(2g-2)B_X+(K_MB_M-(2g-2)B_M^2)\Sigma_X\\
K_N&\mapsto \overline{K_N}+(2g-2)B_X+(K_NB_N-(2g-2)B_N^2)\Sigma_X'.
\end{align*}
This implies:
\begin{cor}\label{formula KX embedd HM HN HX} Under the assumptions in Theorem \ref{thm on the canonical class Gompf} and the embeddings of $H^2(M)$ and $H^2(N)$ into $H^2(X)$ given by Corollary \ref{general case embedding HM HN in HX}, the canonical class of the symplectic generalized fibre sum $X=M\#_{\Sigma_M=\Sigma_N}N$ is given by
\begin{equation*}
K_X=K_M+K_N+\Sigma_X+\Sigma_X'-(2g-2)B_X+\sum_{i=1}^dt_iR_i,
\end{equation*}
where $t_i= K_{X_0}S_i$.
\end{cor}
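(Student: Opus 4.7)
The plan is to treat this as a purely algebraic bookkeeping step: substitute the embedding formulas from Corollary \ref{general case embedding HM HN in HX} into the intrinsic formula from Theorem \ref{thm on the canonical class Gompf} and simplify. No new geometric input is required; every intersection number needed has already been computed.

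First I would rewrite the formula of Theorem \ref{thm on the canonical class Gompf},
\begin{equation*}
K_X=\overline{K_M}+\overline{K_N}+\sum_{i=1}^d t_i R_i+(2g-2)B_X+\eta_X\Sigma_X+\eta_X'\Sigma_X',
\end{equation*}
with $\eta_X=K_MB_M+1-(2g-2)B_M^2$ and $\eta_X'=K_NB_N+1-(2g-2)B_N^2$. Next, by Corollary \ref{general case embedding HM HN in HX} the embeddings $H^2(M)\hookrightarrow H^2(X)$ and $H^2(N)\hookrightarrow H^2(X)$ send
\begin{align*}
K_M&\longmapsto \overline{K_M}+(2g-2)B_X+\bigl(K_MB_M-(2g-2)B_M^2\bigr)\Sigma_X,\\
K_N&\longmapsto \overline{K_N}+(2g-2)B_X+\bigl(K_NB_N-(2g-2)B_N^2\bigr)\Sigma_X',
\end{align*}
where the first embedding sends $\Sigma_M\mapsto \Sigma_X$ and the second $\Sigma_N\mapsto \Sigma_X'$. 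Solving these for $\overline{K_M}$ and $\overline{K_N}$ and substituting into the expression for $K_X$ reduces the question to a short coefficient check.

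The only computation to carry out is then verifying, term by term, that the contributions of $B_X$, $\Sigma_X$ and $\Sigma_X'$ collapse correctly. The $B_X$-coefficient becomes $(2g-2)-2(2g-2)=-(2g-2)$. The $\Sigma_X$-coefficient becomes $\eta_X-(K_MB_M-(2g-2)B_M^2)=1$, and symmetrically the $\Sigma_X'$-coefficient becomes $\eta_X'-(K_NB_N-(2g-2)B_N^2)=1$. The rim-tori sum $\sum t_i R_i$ is untouched. Combining these yields exactly
\begin{equation*}
K_X=K_M+K_N+\Sigma_X+\Sigma_X'-(2g-2)B_X+\sum_{i=1}^d t_iR_i,
\end{equation*}
as claimed.

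There is no substantive obstacle here; the statement is a reformulation of Theorem \ref{thm on the canonical class Gompf} designed so that the $P(M)$- and $P(N)$-parts of $K_X$ are absorbed into $K_M$ and $K_N$, and the extra $\pm 1$ in $\eta_X,\eta_X'$ is precisely what rescales the $\Sigma_M$- and $\Sigma_N$-coefficients so that $\Sigma_X$ and $\Sigma_X'$ appear with coefficient $1$. The only point worth flagging is that $\Sigma_X$ and $\Sigma_X'$ are genuinely distinct generators of $H^2(X)$ (differing by $R_C=-\sum a_iR_i$ via Lemma \ref{lem difference sigmaXX' RC}), so the embeddings must be applied to $K_M$ and $K_N$ separately—conflating $\Sigma_X$ with $\Sigma_X'$ would reintroduce a spurious rim-tori correction that has already been accounted for in the coefficients $t_i$.
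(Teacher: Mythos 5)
Your proposal is correct and follows exactly the paper's own route: the paper likewise computes the images of $K_M$ and $K_N$ under the embeddings of Corollary \ref{general case embedding HM HN in HX} (using $K_M\Sigma_M=K_N\Sigma_N=2g-2$) and substitutes them into the formula of Theorem \ref{thm on the canonical class Gompf}, the coefficient checks being the ones you carried out. Your closing caveat about keeping $\Sigma_X$ and $\Sigma_X'$ distinct is apt, though note the expression $R_C=-\sum_i a_iR_i$ comes from Remark \ref{rem on construction of Ri Si} rather than Lemma \ref{lem difference sigmaXX' RC}, which only states $\Sigma_X'-\Sigma_X=R_C$.
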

For example, suppose that the genus $g$ is equal to one and there are no rim tori in $X$. Then we get the classical formula for the symplectic generalized fibre sum along embedded tori
\begin{equation*}
K_X=K_M+K_N+2\Sigma_X,
\end{equation*}
which can be found in the literature, e.g.~\cite{Smi}.

\section{Checks and comparison with previously known formulae}\label{gompf can class applic examp}

To check the formula for the canonical class given by Theorem \ref{thm on the canonical class Gompf}, we can calculate the square $K_X^2=Q_X(K_X,K_X)$ and compare it with the classical formula
\begin{equation}\label{formula for c_1^2 Gompf}
c_1(X)^2=c_1(M)^2+c_1^2(N)+8g-8,
\end{equation}
that can be derived independently using the formulae for the Euler characteristic and the signature of a generalized fibre sum (see the proof of Corollary \ref{betti X}) 
\begin{align*}
e(X)&=e(M)+e(N)+4g-4\\
\sigma(X)&=\sigma(M)+\sigma(N)
\end{align*}
and the formula $c_1^2=2e+3\sigma$. We do this step by step. By equation \eqref{formula canonical class with sigma term} we have: 
\begin{align*}
Q_X(\overline{K_M}, \overline{K_M})&= Q_M(\overline{K_M},\overline{K_M})\\
&= Q_M(\overline{K_M},K_M)\\
&= K_M^2-(2g-2)K_MB_M-(2g-2)(K_MB_M-(2g-2)B_M^2)\\
&=K_M^2-(4g-4)K_MB_M+(2g-2)^2B_M^2.
\end{align*}
The second step in this calculation follows since by definition $\overline{K_M}$ is orthogonal to $B_M$ and $\Sigma_M$. Similarly
\begin{equation*}
Q_X(\overline{K_N}, \overline{K_N})=K_N^2-(4g-4)K_NB_N+(2g-2)^2B_N^2. 
\end{equation*}
The rim torus term $\sum_{i=1}^dr_iR_i$ has zero intersection with itself and all other terms in $K_X$. We have
\begin{equation*}
Q_X(b_XB_X,b_XB_X)=(2g-2)^2(B_M^2+B_N^2),
\end{equation*}
and
\begin{equation*}
2Q_X(b_XB_X,\sigma_X\Sigma_X)=2(2g-2)(K_MB_M+K_NB_N+2-(2g-2)(B_M^2+B_N^2)).
\end{equation*}
The self-intersection of $\Sigma_X$ is zero. Adding these terms together, we get the expected result
\begin{equation*}
K_X^2= K_M^2+K_N^2+8g-8.
\end{equation*}

As another check we compare the formula for $K_X$ in Theorem \ref{thm on the canonical class Gompf} with a formula of Ionel and Parker \cite[Lemma 2.4]{IP2} that determines the intersection of $K_X$ with certain homology classes for symplectic generalized fibre sums in arbitrary dimension and without the assumption of trivial normal bundles of $\Sigma_M$ and $\Sigma_N$. For dimension $4$ with surfaces of genus $g$ and self-intersection zero the formula can be written (in our notation for the cohomology of $X$):
\begin{align*}
K_XC&=K_MC\quad \mbox{for $C\in P(M)$}\\
K_XC&=K_NC\quad \mbox{for $C\in P(N)$}\\
K_X\Sigma_X&=K_M\Sigma_M=K_N\Sigma_N\\
&= 2g-2\quad\mbox{(by the adjunction formula)}\\
K_XR&=0\quad \mbox{for all elements in $R(X)$}\\
K_XB_X&=K_MB_M+K_NB_N+2(B_M\Sigma_M=B_N\Sigma_N)\\
&= K_MB_M+K_NB_N+2.
\end{align*}
There is no statement about the intersection with classes in $S'(X)$ that have a non-zero component in $\mbox{ker}\,(i_M\oplus i_N)$. We calculate the corresponding intersections with the formula for $K_X$ in equation \eqref{formula canonical class with sigma term}. For $C\in P(M)$ we have
\begin{align*}
K_X C&=\overline{K_M} C\\
&= K_MC,
\end{align*}
where the second line follows because the terms in the formula for $\overline{K_M}$ involving $B_M$ and $\Sigma_M$ have zero intersection with $C$, being a perpendicular element. A similar equation holds for $N$. The intersection with $\Sigma_X$ is given by
\begin{align*}
K_X\Sigma_X&=(2g-2)B_X\Sigma_X\\
&= 2g-2.
\end{align*}
The intersection with rim tori is zero and 
\begin{align*}
K_X B_X&= b_XB_X^2+\sigma_X\\
&= (2g-2)(B_M^2+B_N^2)+K_MB_M+K_NB_N+2-(2g-2)(B_M^2+B_N^2)\\
&= K_MB_M+K_NB_N+2,
\end{align*}
which also follows by Lemma \ref{lem comp K_XB_X}. Hence with the formula in Theorem \ref{thm on the canonical class Gompf} we get the same result as with the formula of Ionel and Parker. The only difference is that their formula does not determine the rim tori contribution to the canonical class. 

A final check is the following: Consider the formula for the canonical class as in equation \eqref{formula canonical class with sigma term}. Suppose we change the framing of the surfaces $\Sigma_M$ and $\Sigma_N$ but keep the same abstract gluing diffeomorphism between the boundaries. Then we get the same manifold $X$ but the class $\Sigma_X$ and the numbers $r_i$ change. However, the canonical class should stay the same.
\begin{prop}
The formula for the canonical class depends only on the gluing diffeomorphism and not on the framings of the surfaces $\Sigma_M$ and $\Sigma_N$. 
\end{prop}
\begin{proof}
A change of framing is described by a new choice of basis curves on the boundary of the tubular neighbourhoods:
\begin{align*}
\widetilde{\gamma_i^M}&=\gamma_i^M+b_i\sigma^M\\
\widetilde{\gamma_i^N}&=\gamma_i^N+c_i\sigma^N.
\end{align*}
The meridians $\sigma^M$ and $\sigma^N$ stay the same. In the old basis the gluing diffeomorphism $\phi$ is described by
\begin{equation*}
\phi_*\gamma_i^M=\gamma_i^N+a_i\sigma^N.
\end{equation*}
In the new basis we get
\begin{equation*}
\phi_*\widetilde{\gamma_i^M}=\widetilde{\gamma_i^N}+\widetilde{a_i}\sigma^N
\end{equation*}
where $\widetilde{a_i}=a_i-b_i-c_i$. We also get
\begin{align*}
\widetilde{\gamma_i^M}^*&={\gamma_i^M}^*\\
\widetilde{\sigma^M}^*&={\sigma^M}^*-\sum_{i=1}^{2g}b_i{\gamma_i^M}^*.
\end{align*}
Hence we have for the Poincar\'e duals
\begin{align*}
\widetilde{\Gamma_i^M}&=\Gamma_i^M\\
\widetilde{\Sigma^M}&=\Sigma^M-\sum_{i=1}^{2g}b_i\Gamma_i^M
\end{align*}
and therefore 
 \begin{equation*}
 \widetilde{\Sigma}_X=\Sigma_X-\sum_{i=1}^db_iR_i.
 \end{equation*}
The vanishing classes also have to change because they must be orthogonal to $B_X$ and $\widetilde{\Sigma}_X$. We get
\begin{equation*}
\widetilde{S_i}=S_i+b_iB_X-(b_iB_X^2)\Sigma_X,
\end{equation*}
up to an irrelevant rim torus. The manifold $X_0$ also changes to a manifold $\widetilde{X_0}$: The trivial gluing diffeomorphism is now described by $\widetilde{a_i}=0$, hence in the old basis $a_i=b_i+c_i$. The coefficients $r_i$ change to
\begin{equation*}
\widetilde{r_i}=K_{\widetilde{X_0}}\widetilde{S_i}-\widetilde{a_i}(K_NB_N+1-(2g-2)B_N^2).
\end{equation*}
We have to show that
\begin{equation*}
\sum_{i=1}^d\widetilde{r_i}R_i+\sigma_X\widetilde{\Sigma}_X=\sum_{i=1}^dr_iR_i+\sigma_X\Sigma_X.
\end{equation*}
We first calculate $K_{\widetilde{X_0}}\widetilde{S_i}$:
\begin{align*}
K_{\widetilde{X_0}}\widetilde{S_i}&=\left(\sum_{i=1}^d(K_{X_0}S_i-(b_i+c_i)(K_NB_N+1-(2g-2)B_N^2)  )R_i+b_XB_X+\sigma_X\Sigma_X\right)\widetilde{S_i}\\
&=K_{X_0}S_i-(b_i+c_i)(K_NB_N+1-(2g-2)B_N^2)+b_ib_XB_X^2-(b_iB_X^2)b_X+\sigma_Xb_i\\ 
&=K_{X_0}S_i-(b_i+c_i)h+\sigma_Xb_i,
\end{align*}
where $h=(K_NB_N+1-(2g-2)B_N^2)$. Hence we get
\begin{align*}
\widetilde{r_i}&=K_{X_0}S_i-(b_i+c_i)h+\sigma_Xb_i-(a_i-b_i-c_i)h\\
&=K_{X_0}S_i-a_ih+\sigma_Xb_i.
\end{align*}
Therefore
\begin{align*}
\sum_{i=1}^d\widetilde{r_i}R_i+\sigma_X\widetilde{\Sigma}_X&=\sum_{i=1}^d(K_{X_0}S_i-a_ih+\sigma_Xb_i)R_i+\sigma_X\Sigma_X-\sum_{i=1}^d\sigma_Xb_iR_i\\
&=\sum_{i=1}^d(K_{X_0}S_i-a_ih)R_i+\sigma_X\Sigma_X\\
&=\sum_{i=1}^dr_iR_i+\sigma_X\Sigma_X.
\end{align*}
\end{proof}

\section{Generalized fibre sums along tori in cusp neighbourhoods}\label{subsect symp gen fibre sum E(n)}

We consider a special case of Theorem \ref{thm on the canonical class Gompf}, where the generalized fibre sum is along embedded tori in cusp neighbourhoods. Let $M$ and $N$ be closed symplectic 4-manifolds which contain symplectically embedded tori $T_M$ and $T_N$ of self-intersection zero, representing indivisible classes. Suppose that $M$ and $N$ have torsion free homology and both tori are contained in cusp neighbourhoods. Then each torus has two vanishing cycles coming from the cusp. We choose identifications of both $T_M$ and $T_N$ with $T^2=S^1\times S^1$ such that the vanishing cycles are given by the simple closed loops $\gamma_1=S^1\times 1$ and $\gamma_2=1\times S^1$. The loops bound embedded vanishing disks in $M$ and $N$, denoted by $(D_1^M, D_2^M)$ and $(D_1^N, D_2^N)$. The existence of the vanishing disks shows that the embeddings $T_M\rightarrow M$ and $T_N\rightarrow N$ induce the zero map on the fundamental group. 

We choose for both tori trivializations of the normal bundles and corresponding push-offs $T^M$ and $T^N$. By choosing the trivializations appropriately we can assume that the vanishing disks bound the vanishing cycles on these push-offs and are contained in $M\setminus \mbox{int}\,\nu T_M$ and $N\setminus \mbox{int}\,\nu T_N$. The vanishing disks have self-intersection $-1$ if the vanishing cycles on the boundary of the tubular neighbourhood are framed by the normal framing on the push-off. We consider the symplectic generalized fibre sum $X=X(\phi)=M\#_{T_M=T_N}N$ for a gluing diffeomorphism 
\begin{equation*}
\phi\colon \partial(M\setminus \mbox{int}\,\nu T_M)\rightarrow \partial(N\setminus \mbox{int}\,\nu T_N).
\end{equation*}
The vanishing cycles on both tori determine a basis for $H_1(T^2)$. If $a_i=\langle C,\gamma_i\rangle$ and $\sigma$ denotes the meridians to $T_M$ in $M$ and $T_N$ in $N$, then the gluing diffeomorphism $\phi\colon \partial\nu T_M\rightarrow \partial \nu T_N$ maps in homology
\begin{align*}
\gamma_1&\mapsto \gamma_1+a_1\sigma\\
\gamma_2&\mapsto\gamma_2+a_2\sigma\\
\sigma&\mapsto-\sigma
\end{align*}
by Lemma \ref{act phi hom 1}. By Proposition \ref{prop [C] determines phi} the diffeomorphism $\phi$ is determined by the integers $a_1,a_2$ up to isotopy. Note that $H_1(X(\phi))\cong H_1(M)\oplus H_1(N)$ by Theorem \ref{H 1 for X}. Hence under our assumptions the homology of $X(\phi)$ is torsion free. The group of rim tori is $R(X)=\mbox{coker}(i_M^*+i_N^*)\cong \mathbb{Z}^2$. Let $\gamma_1^*,\gamma_2^*$ denote the dual basis of $H^1(T^2)$ and $R_1, R_2$ the associated rim tori in $X$.  

We can calculate the canonical class of $X=X(\phi)$ by Theorem \ref{thm on the canonical class Gompf}: Let $B_M$ and $B_N$ denote surfaces in $M$ and $N$ which intersect $T_M$ and $T_N$ transversely once and are disjoint from the vanishing disks. Then the canonical class is given by
\begin{equation*}
K_X=\overline{K_M}+\overline{K_N}+(t_1R_1+t_2R_2)+b_XB_X+\eta_X T_X+\eta_X' T_X',
\end{equation*}
where
\begin{align*}
\overline{K_M}&=K_M-(K_MB_M)T_M \in P(M)\\
\overline{K_N}&=K_N-(K_NB_N)T_N \in P(N)\\
t_i&=K_{X_0}S_i\\
b_X&=2g-2=0\\
\eta_X&=K_MB_M+1\\
\eta_X'&=K_NB_N+1.
\end{align*}
Here $T_X$ is the torus in $X$ determined by the push-off $T^M$ and $T_X'$ is determined by the push-off $T^N$.
\begin{lem}\label{SW basic class argum on -2 spheres} In the situation above we have $K_{X_0}S_i=0$ for $i=1,2$.
\end{lem}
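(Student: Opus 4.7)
The plan is to reduce the claim to a vanishing of the canonical class on a specific $(-2)$-sphere in an auxiliary fibre sum. Consider the symplectic fibre sum $X_0 := X(\phi_0)$ built with the trivial gluing diffeomorphism $\phi_0$, i.e.~$C = 0$ so that every $a_i = 0$. With $C=0$, the construction in Section \ref{subsection defn split surfaces Si} degenerates to $S_i^M = D_i^M$ and $S_i^N = D_i^N$, and the split surface $S_i \subset X_0$ is simply $D_i^M \cup_{\gamma_i}(-D_i^N)$. Since each vanishing disk has self-intersection $-1$ (as recalled in the paragraph preceding the lemma), $S_i$ is a smoothly embedded $2$-sphere with $S_i\cdot S_i = -2$ in the symplectic $4$-manifold $X_0$.

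The core step is to establish $K_{X_0}\cdot S_i = 0$. The cleanest route is that the vanishing thimbles $D_i^M$ and $D_i^N$ may be taken to be Lagrangian with respect to the local Kähler-type forms near the cusp fibres, and the Gompf product symplectic form over the gluing region identifies them (for trivial $C$) into a closed Lagrangian $2$-sphere $S_i \subset X_0$. For any closed orientable Lagrangian surface $L$ in a symplectic $4$-manifold, the splitting $TX_0|_L \cong TL \otimes_{\mathbb{R}} \mathbb{C}$ implies $c_1(TX_0)|_L = 0$ (odd Chern classes of complexified real bundles vanish), so $K_{X_0}\cdot S_i = 0$. Alternatively, which is presumably the origin of the label of the lemma, one may invoke Seiberg-Witten theory: by Taubes' theorem $K_{X_0}$ is an SW basic class of the symplectic manifold $X_0$, and the SW adjunction inequality applied to a smoothly embedded sphere of self-intersection $-2$ forces $|K_{X_0}\cdot S_i|\le 0$.

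Substituting $a_i = 0$ into the formula
\[ K_X\cdot S_i \;=\; K_M D_i^M - K_N D_i^N - a_i\bigl(K_N B_N + 1 - (2g-2)B_N^2\bigr), \]
derived in the course of proving Theorem \ref{thm on the canonical class Gompf}, yields $K_{X_0}\cdot S_i = K_M D_i^M - K_N D_i^N$; combining this with the previous step gives $K_M D_i^M - K_N D_i^N = 0$, as claimed. The main obstacle is the vanishing $K_{X_0}\cdot S_i = 0$: the Lagrangian route requires a symplectic deformation matching the Kähler-like structure on the cusp neighbourhoods with the cylindrical product form used in the Gompf construction, which is standard but technical; the Seiberg-Witten route needs $b^+(X_0) > 1$ for the cleanest statement of the adjunction inequality, which is satisfied generically but borderline cases ($b^+(X_0)=1$) may require a wall-crossing argument or a mild stabilisation.
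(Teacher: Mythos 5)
Your reduction is exactly the one the paper makes: pass to $X_0=X(\phi_0)$, observe that $D_i^M$ and $D_i^N$ sew together to embedded spheres $S_i$ with $S_i^2=-2$, note that setting $a_i=0$ in the coefficient formula gives $K_{X_0}\cdot S_i=K_MD_i^M-K_ND_i^N$, and so reduce everything to the vanishing $K_{X_0}\cdot S_i=0$. The problem is that neither of your two routes to this vanishing goes through as stated. The Seiberg--Witten route is a genuine gap: the adjunction inequality of \cite{MSzT} applies to embedded surfaces of genus at least one (tori of square zero in the relevant case), \emph{not} to spheres, and the two-sided bound $|K\cdot S|\leq -S^2-2$ you implicitly invoke for embedded $(-2)$-spheres is false for general basic classes. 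For instance, in $E(2)\#2\overline{\mathbb{CP}}{}^2$ the class $E_1-E_2$ is a Seiberg--Witten basic class (blow-up formula) and is represented by an embedded sphere of square $-2$ obtained by tubing $E_1$ to $\overline{E_2}$, yet it pairs with itself to give $-2\neq 0$. Ruling this phenomenon out for the particular class $K_{X_0}$ would require additional Taubes-type input that you do not supply. Your Lagrangian route asserts that the vanishing thimbles can be made Lagrangian compatibly with the product form on Gompf's neck so that $S_i$ becomes a Lagrangian sphere; this is precisely the case the paper flags as ``clear by the adjunction formula if the spheres are symplectic or Lagrangian'' and then deliberately avoids, because establishing it for the actual symplectic form produced by the construction is a substantive unproven claim, not a routine matching.

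The paper's actual device circumvents both difficulties: there are rim tori $R_1,R_2$ in $X_0$ dual to $S_1,S_2$, of the form (curve on $\Sigma$)$\times\sigma^M$ in a collar of the gluing hypersurface, and these \emph{can} be assumed Lagrangian for the product form $\omega_D+\omega_\Sigma$ there (this is easy, unlike making the thimbles Lagrangian). Then $K_{X_0}\cdot R_i=0$ by the adjunction formula for Lagrangians (the same complexification fact you use), $R_i\cdot S_i=1$, and smoothing the single intersection point yields a smoothly embedded torus of self-intersection
\begin{equation*}
(R_i+S_i)^2=0+2-2=0,
\end{equation*}
to which the torus adjunction inequality of \cite{MSzT} legitimately applies for the basic class $K_{X_0}$, giving $K_{X_0}(R_i+S_i)=0$ and hence $K_{X_0}\cdot S_i=0$. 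You should adopt this smoothing trick as the core of the proof. Finally, your worry about $b_2^+(X_0)=1$ is moot in this situation: both vanishing cycles bound disks in $M'$ and $N'$, so $d=2$ in Definition \ref{defn dimension d of ker iM*+iN*}, and Corollary \ref{betti X} gives $b_2^+(X_0)=b_2^+(M)+b_2^+(N)+1\geq 3$, so no wall-crossing or stabilisation argument is needed.
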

\begin{proof} The pairs $(D_1^M, D_1^N)$ and $(D_2^M, D_2^N)$ sew together in the generalized fibre sum $X_0=X(\phi_0)$, where $\phi_0$ denotes the trivial gluing diffeomorphism that identfies the push-offs, and determine embedded spheres $S_1, S_2$ of self-intersection $-2$. We claim that  
\begin{equation*}
K_{X_0}S_i=0, \quad i=1,2.
\end{equation*}
This is clear by the adjunction formula if the spheres are symplectic or Lagrangian. In the general case, there exist rim tori $R_1, R_2$ in $X_0$ which are dual to the spheres $S_1, S_2$ and which can be assumed Lagrangian by the Gompf construction. Consider the pair $R_1$ and $S_1$: By the adjunction formula we have $K_{X_0}R_1=0$. The sphere $S_1$ and the torus $R_1$ intersect once. By smoothing the intersection point we get a smooth torus of self-intersection zero in $X_0$ representing $R_1+S_1$. In our case, we have $b_2^+(X_0)\geq 3$, hence the canonical class $K_{X_0}$ is a Seiberg-Witten basic class. The adjunction inequality \cite[Theorem 2.4.8]{GS} implies that $K_{X_0}(R_1+S_1)=0$, which shows that $K_{X_0}S_1=0$. In a similar way it follows that $K_{X_0}S_2=0$.
\end{proof}

This implies:
\begin{prop}\label{prop cusp tori canonical class sum formula} Let $M$ and $N$ be closed symplectic 4-manifolds with torsion free homology. Suppose that $T_M$ and $T_N$ are embedded symplectic tori of self-intersection zero which are contained in cusp neighbourhoods in $M$ and $N$ and represent indivisible classes. Then the canonical class of the symplectic generalized fibre sum $X=X(\phi)=M\#_{T_M=T_N}N$ is given by
\begin{align*}
K_X&=\overline{K_M}+\overline{K_N}+\eta_X T_X+\eta'_XT_X'\\
&=K_M+K_N+T_X+T_X',
\end{align*}
where
\begin{align*}
\overline{K_M}&=K_M-(K_MB_M)T_M \in P(M)\\
\overline{K_N}&=K_N-(K_NB_N)T_N \in P(N)\\
\eta_X&=K_MB_M+1\\
\eta_X'&=K_NB_N+1.
\end{align*}
The second line in the formula for $K_X$ holds by Corollary \ref{formula KX embedd HM HN HX} under the embeddings of $H^2(M)$ and $H^2(N)$ in $H^2(X)$.
\end{prop}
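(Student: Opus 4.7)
The plan is to obtain the proposition as a direct specialization of Theorem \ref{thm on the canonical class Gompf} to $g=1$, once we make the right choice of bounding surfaces for the vanishing cycles and show that the resulting rim-tori coefficients vanish.

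First I would verify the setup. Since $T_M$ and $T_N$ lie in cusp neighbourhoods, the inclusions induce the zero map on $\pi_1$ and hence on $H_1$, so all of $H_1(T^2)$ lies in $\ker(i_M\oplus i_N)$, giving $d=2$. I would take $\alpha_i=\gamma_i$ as the basis of $\ker(i_M\oplus i_N)$ featured in Section \ref{subsection defn split surfaces Si}, and choose the bounding surfaces $D_i^M\subset M'$, $D_i^N\subset N'$ to be the vanishing disks (which already bound $\gamma_i$ on the push-offs and, up to tubing with copies of $T^M, \Sigma^M$ and elements of $P(M)$, may be arranged to satisfy the hypotheses of Lemmas \ref{lem1 D_i^M} and \ref{lem2 D_i^M}). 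Plugging $g=1$ into Theorem \ref{thm on the canonical class Gompf} makes every term proportional to $2g-2$ vanish: $b_X=0$, the perpendicular projections collapse to $\overline{K_M}=K_M-(K_MB_M)T_M$ and $\overline{K_N}=K_N-(K_NB_N)T_N$, and the push-off coefficients reduce to $\eta_X=K_MB_M+1$ and $\eta_X'=K_NB_N+1$. This yields
\begin{equation*}
K_X=\overline{K_M}+\overline{K_N}+t_1R_1+t_2R_2+\eta_XT_X+\eta_X'T_X',
\end{equation*}
with $t_i=K_MD_i^M-K_ND_i^N$.

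The crucial step is to show $t_1=t_2=0$, which is exactly Lemma \ref{SW basic class argum on -2 spheres} applied to our choice of bounding surfaces. The key idea is that $D_i^M$ and $D_i^N$ sew together in the trivially glued fibre sum $X_0=X(\phi_0)$ to give embedded $(-2)$-spheres $S_i$, and by the remark preceding this subsection one has $t_i=K_{X_0}S_i$. The Lagrangian rim tori $R_i$ dual to $S_i$ satisfy $K_{X_0}R_i=0$ by adjunction; smoothing the intersection of $R_i$ and $S_i$ yields an embedded torus of self-intersection zero representing $R_i+S_i$, and the Seiberg--Witten adjunction inequality applied to the basic class $K_{X_0}$ forces $K_{X_0}(R_i+S_i)=0$, whence $K_{X_0}S_i=0$ and $t_i=0$.

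The second displayed identity then follows by substituting $g=1$ and $t_i=0$ directly into Corollary \ref{formula KX embedd HM HN HX}. The main obstacle is entirely absorbed by Lemma \ref{SW basic class argum on -2 spheres}; the remaining content is bookkeeping, with the one subtle point being the identification of the vanishing disks as permissible choices for the $D_i^M,D_i^N$ in the split-class construction, which is why I handled that in the first paragraph.
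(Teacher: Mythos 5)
Your proposal is correct and takes essentially the same route as the paper: the paper likewise specializes Theorem \ref{thm on the canonical class Gompf} to $g=1$ (with $d=2$ coming from the vanishing cycles and the vanishing disks serving as the bounding surfaces $D_i^M$, $D_i^N$) and then invokes Lemma \ref{SW basic class argum on -2 spheres}, proved by exactly the Seiberg--Witten adjunction-inequality argument you sketch, to kill the rim-torus coefficients $t_1,t_2$. The second displayed identity is obtained in the paper, just as in your proposal, from Corollary \ref{formula KX embedd HM HN HX}.
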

As a special case, suppose that the tori $T_M$ and $T_N$ are contained in smoothly embedded nuclei $N(m)\subset M$ and $N(n)\subset N$, which are by definition diffeomorphic to neighbourhoods of a cusp fibre and a section in the elliptic surfaces $E(m)$ and $E(n)$, cf.~\cite{Gnuc, GS}. The surfaces $B_M$ and $B_N$ can then be chosen as the spheres $S_M, S_N$ inside the nuclei corresponding to the sections. The spheres have self-intersection $-m$ and $-n$ respectively. If the sphere $S_M$ is symplectic or Lagrangian in $M$, we get by the adjunction formula
\begin{equation*}
K_MS_M=m-2.
\end{equation*}
If $m=2$ this holds by an argument similar to the one in Lemma \ref{SW basic class argum on -2 spheres} already without the assumption that $S_M$ is symplectic or Lagrangian. With Proposition \ref{prop cusp tori canonical class sum formula} we get:
\begin{cor}\label{cor tori in nuclei can class sum} Let $M$ and $N$ be closed symplectic 4-manifolds with torsion free homology. Suppose that $T_M$ and $T_N$ are embedded symplectic tori of self-intersection zero which are contained in embedded nuclei $N(m)\subset M$ and $N(n)\subset N$. Suppose that $m=2$ or the sphere $S_M$ is symplectic or Lagrangian. Similarly, suppose that $n=2$ or the sphere $S_N$ is symplectic or Lagrangian. Then the canonical class of the symplectic generalized fibre sum $X=X(\phi)=M\#_{T_M=T_N}N$ is given by
\begin{equation*}
K_X=\overline{K_M}+\overline{K_N}+(m-1)T_X+(n-1)T_X',
\end{equation*}
where
\begin{align*}
\overline{K_M}&=K_M-(m-2)T_M \in P(M)\\
\overline{K_N}&=K_N-(n-2)T_N \in P(N).\\
\end{align*}
\end{cor}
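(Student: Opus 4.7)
The plan is to apply Proposition \ref{prop cusp tori canonical class sum formula} directly, identifying the surfaces $B_M$ and $B_N$ of that proposition with the sphere sections $S_M \subset N(m)$ and $S_N \subset N(n)$ of the nuclei. By definition a nucleus $N(m)$ is a regular neighborhood of a cusp fiber and a section, so the embedded torus $T_M$ is in particular contained in a cusp neighborhood inside $M$, and likewise for $T_N \subset N$; the hypotheses of Proposition \ref{prop cusp tori canonical class sum formula} are therefore satisfied, and the sphere $S_M$ (respectively $S_N$) has $S_M \cdot T_M = 1$ (respectively $S_N \cdot T_N = 1$), so it plays exactly the role required of $B_M$ (respectively $B_N$).

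The one non-trivial computation is then to show that
\begin{equation*}
K_M \cdot S_M = m-2, \qquad K_N \cdot S_N = n-2.
\end{equation*}
If $S_M$ is symplectic or Lagrangian, this is immediate from the adjunction formula applied to a smoothly embedded sphere of self-intersection $-m$: one has $-2 = K_M \cdot S_M + S_M^2 = K_M \cdot S_M - m$. If instead $m=2$, $S_M$ need not be symplectic or Lagrangian, and one argues as in the proof of Lemma \ref{SW basic class argum on -2 spheres}: the cusp fibre $T_M$ and the section $S_M$ meet transversely once inside $N(2)$, so smoothing the intersection produces a smoothly embedded torus in $M$ representing $T_M + S_M$ with self-intersection $T_M^2 + S_M^2 + 2 = 0$. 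The canonical class $K_M$ is a Seiberg--Witten basic class, and the adjunction inequality \cite{MSzT} applied to both $\pm(T_M+S_M)$ gives $K_M \cdot (T_M + S_M) = 0$; since $K_M \cdot T_M = 0$ by the adjunction formula for the symplectic torus $T_M$, we conclude $K_M \cdot S_M = 0 = m-2$. The same argument works for $S_N$.

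Substituting $K_M B_M = m-2$ and $K_N B_N = n-2$ into the formula of Proposition \ref{prop cusp tori canonical class sum formula} with $g=1$ now yields
\begin{align*}
\overline{K_M} &= K_M - (m-2) T_M \in P(M),\\
\overline{K_N} &= K_N - (n-2) T_N \in P(N),\\
\eta_X &= K_M B_M + 1 = m-1,\\
\eta_X' &= K_N B_N + 1 = n-1,
\end{align*}
and inserting these values into
\begin{equation*}
K_X = \overline{K_M} + \overline{K_N} + \eta_X T_X + \eta_X' T_X'
\end{equation*}
gives the stated formula. The only real obstacle in the argument is the $m=2$ (or $n=2$) sub-case, where the adjunction formula is unavailable and one genuinely needs the Seiberg--Witten input; every other step is a direct specialization of the previously established Proposition \ref{prop cusp tori canonical class sum formula}.
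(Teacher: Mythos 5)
Your proposal is correct and follows the paper's own route exactly: the paper also specializes Proposition \ref{prop cusp tori canonical class sum formula} by taking $B_M=S_M$, $B_N=S_N$, obtains $K_MS_M=m-2$ from the adjunction formula when $S_M$ is symplectic or Lagrangian, and in the $m=2$ case invokes the same smoothing-plus-adjunction-inequality argument modelled on Lemma \ref{SW basic class argum on -2 spheres} (with $T_M$ playing the role of the dual torus), which you have in fact spelled out in more detail than the paper does.
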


As an example we consider {\em twisted fibre sums} of elliptic surfaces $E(m)$ and $E(n)$ which are glued together by diffeomorphisms that still preserve the $S^1$-fibration on the boundary of the tubular neighbourhood, but not the $T^2$-fibration as in the standard fibre sum (compare with \cite{Pa}):

\begin{ex}\label{general fibre sum elliptic twist} Suppose that $M=E(m)$ and $N=E(n)$ with general fibres $T_M$ and $T_N$. The framing for the tori is given by the framing induced from the elliptic fibration. Since $K_{E(m)}=(m-2)T_M$ and the spheres in the nuclei are symplectic, the canonical class of $X=X(\phi)=E(m)\#_{T_M=T_N}E(n)$ is given by
\begin{equation*}
K_X=(m-1)T_X+(n-1)T_X'.
\end{equation*}
Using the identity $T_X'=T_X+R_C$ this formula can be written as
\begin{equation*}
K_X=(m+n-2)T_X+(n-1)R_C.
\end{equation*}
Note that $R_C=-(a_1R_1+a_2R_2)$. If both coefficients $a_1$ and $a_2$ vanish and hence the gluing diffeomorphism is isotopic to the trivial diffeomorphism, we get the standard formula 
\begin{equation*}
K_X=(m+n-2)T_X
\end{equation*}
for the fibre sum $E(m+n)=E(m)\#_{T_M=T_N}E(n)$. In general, if $n=1$ it follows that there is no rim tori contribution to the canonical class, independent of the gluing diffeomorphism $\phi$. This can be explained as follows: Every orientation preserving self-diffeomorphism of $\partial (E(1)\setminus \mbox{int}\,\nu T)$ extends over $E(1)\setminus \mbox{int}\,\nu T$, where $T$ denotes a general fibre \cite[Theorem 8.3.11]{GS}. Hence all generalized fibre sums $X(\phi)$ are diffeomorphic to the elliptic surface $E(m+1)$ in this case. The same argument holds if $n\neq 1$ but $m=1$. Using the Seiberg-Witten invariants of elliptic surfaces we see that the canonical class is always the standard one.

If both $m$ and $n$ are different from $1$, there may exist a non-trivial rim tori contribution. For example, if we consider the generalized fibre sum $X=X(\phi)=E(2)\#_{T_M=T_N}E(2)$ of two $K3$ surfaces $E(2)$, then
\begin{align*}
K_X&=2T_X-(a_1R_1+a_2R_2)\\
&=T_X+T_X'
\end{align*}
If the greatest common divisor of $a_1$ and $a_2$ is odd, then $K_X$ is indivisible (because there exist certain vanishing classes in $X$ dual to the rim tori $R_1$ and $R_2$). In this case the manifold $X$ is no longer spin, hence cannot be homeomorphic to the spin manifold $E(4)$. 
\end{ex}

\bibliographystyle{amsplain}

\bigskip
\bigskip

\end{document}